\renewcommand{\epsilon}{\varepsilon}
\newcommand{\C}{\mathbb{C}}
\newcommand{\hc}{\textbf H^2_\C}
\newcommand{\R}{\mathbb{R}}
\newcommand{\zzz}{\textbf t}
\newcommand{\www}{\textbf s}
\newcommand{\zzc}[1]{t_{#1}}
\newcommand{\wwc}[1]{s_{#1}}
\newcommand{\conf}[1]{\textcircled{#1}}
\newcommand{\br}[3]{br_{#1}(#2,#3)}
\newcommand{\cone}{(\alpha,\alpha,\theta,\phi)}
\newcommand{\ctwo}{(\pi+\theta-\alpha,\alpha,2\alpha-\pi,\pi+\theta+\phi-2\alpha)}
\newcommand{\cthree}{(\alpha,\pi+\theta-\alpha,\theta,\phi)}
\newcommand{\vertr}[3]{\ensuremath{\textbf v_{#1,#2,#3}}} 
\DeclareMathOperator{\im}{Im}
\DeclareMathOperator{\id}{Id}
\DeclareMathOperator{\area}{Area}
\newcommand{\gA}[1]{\ensuremath{A_{#1}}} 
\newcommand{\gR}[1]{\ensuremath{R_{#1}}} 
\newcommand{\gRR}[1]{\ensuremath{B_{#1}}} 
\theoremstyle{plain}
\newtheorem{theo}{Theorem}[section]
\newtheorem{lemma}[theo]{Lemma}
\newtheorem{prop}[theo]{Proposition}
\theoremstyle{definition}
\newtheorem{rk}[theo]{Remark}
\title{Fundamental polyhedra for all Deligne-Mostow lattices in $PU(2,1)$}
\author{I. Pasquinelli}
\begin{document}

\maketitle

\begin{abstract}
In this work we will build a fundamental domain for Deligne-Mostow lattices in $PU(2,1)$ with 2-fold symmetry, which complete the whole list of Deligne-Mostow lattices in dimension 2. 
These lattices were introduced by Deligne and Mostow, in \cite{mostow2}, \cite{mostow3} and \cite{delignemostow} using monodromy of hypergeometric functions and have been reinterpreted by Thurston in \cite{thurston} as authomorphisms on a sphere with cone singularities. 
Following his approach, Parker in \cite{livne}, Boadi and Parker in \cite{boadiparker} and Pasquinelli in \cite{irene} built a fundamental domain for the class of lattices with 3-fold symmetry, i.e. when three of five cone singularities have same cone angle.
Here we extend this construction to the asymmetric case, where only two of the five cone points on the sphere have same cone angle, so to have a fundamental domain for each commensurability class of Deligne-Mostow lattices in $PU(2,1)$. 
\end{abstract}


\section{Introduction}\label{sec:intro}

Deligne-Mostow lattices first appeared in \cite{delignemostow}, \cite{mostow2} and \cite{mostow3}.
They arise as monodromy of hypergeometric functions, a construction that dates back to Picard, Lauricella and others.
More precisely, they start with a \emph{ball $N$-tuple} $\mu=(\mu_1, \dots \mu_N)$, i.e. a set of $N$ real numbers between 0 and 1 such that $\sum \mu_i=2$.
Then they deduce a sufficient condition on $\mu$ for the monodromy group to be discrete, called condition $\Sigma$INT. 
This improved the sufficient condition called INT and introduced by Picard.

In \cite{thurston}, Thurston reinterpreted these lattices in terms of cone metrics on the sphere. 
First he considers a sphere with $N$ cone singularities of cone angles $\theta_i $ between 0 and $2\pi$ satisfying the discrete Gauss-Bonnet formula (i.e. $\sum \alpha_i =4\pi$, where $\alpha_i=2\pi-\theta_i$ are the curvatures at the cone points). 
Then he proves that the moduli space of such cone metrics with prescribed cone angles and area 1 has a complex hyperbolic structure of dimension $N-3$. 
Considering the automorphisms of the sphere swapping cone points (and their squares), he gets some conditions on the cone points to obtain a lattice. 
Thurston's criterion corresponds to the $\Sigma$INT condition when taking $\mu_i=2\pi \alpha_i $.
In \cite{sadayoshi}, Kojima proved that the two constructions are equivalent.

Combining the works of Deligne, Mostow and Thurston already mentioned with the work of Sauter (see \cite{sauter}), one gets a finite and exhaustive list of ball $N$-tuples $\mu $ that give rise to a lattice using this construction (see also \cite{dmbook}). 
This includes nine ball $5$-tuples and one ball $6$-tuple not satisfying the condition $\Sigma$INT, but commensurable to a monodromy group satisfying $\Sigma$INT.
Any other value gives a non-discrete quotient. 
In this work we will concentrate on the ball $5$-tuples in Deligne-Mostow and Thurston's works and we will study the lattices in $PU(2,1)$ obtained.
All of these lattices have an extra symmetry given by some of the $\mu_i $'s having the same value. 
In Thurston's approach, this means that some of the cone points on the sphere have same cone angle. 
In particular, the lattices will have either 2-fold or 3-fold symmetry (i.e. they will have 2 or 3 cone points with same cone angle respectively). 
The latter case has been analysed in many different papers. 
First, in \cite{type2}, Deraux, Falbel and Paupert built a fundamental domain for the lattices of second type.
In \cite{livne}, using a different method, Parker built a fundamental domain for a class of these lattices, called Livné lattices (or lattices of third type).    
The lattices of first type were treated by Boadi and Parker in \cite{boadiparker}, using the same procedure as in \cite{livne}. 
Later, in \cite{irene}, I explained how to use Parker's method to describe a single polyhedron that, appropriately modified, gives a fundamental domain for all lattices with 3-fold symmetry, including the cases already treated and the lattices of fourth type.  

The goal of this paper is to show how to adapt Parker's construction in order to build a fundamental domain for the remaining Deligne-Mostow lattices, namely those with 2-fold symmetry. 
In the first part of the paper, we will forget about the symmetries at all and we will give a completely general construction that is valid whatever the initial cone points are. 
The construction consists in parametrising the cone metric and showing geometrically that Thurston's theorem holds, giving explicitly the Hermitian form that determines the complex hyperbolic structure. 
Then we will introduce the \emph{moves}, which are maps on the sphere corresponding to swapping two cone points, i.e. applying half Dehn twist along a curve containing two cone points or corresponding to a full Dehn twist. 
These are automorphisms of the sphere when the cone points which are swapped have same cone angle. 
Here we will also consider maps that swap cone points with different cone angle.
This means that we land on a new cone metric after applying the move. 
Moreover, we will show how one can build a polyhedron by studying what happens when pairs of cone points approach until they coalesce. 
We want to remark that this is completely general and can be built even if the cone angles we started from do not give a lattice. 
Then, in the 3-fold symmetry case the polyhedron is actually a fundamental domain for the lattices, when starting from the right set of cone singularities. 
In the 2-fold symmetry case this polyhedron is a building block for the fundamental domain, which will consist of the union of three copies of this polyhedron, each for a different ordering of the cone points. 
How to take these three copies is the topic of the second part of the paper, together with the proof that the new polyhedron built is the fundamental domain we wanted. 

More precisely, Section \ref{sec:intro} is the present introduction. 

Section \ref{sec:config} considers a generic cone metric on the sphere (without any symmetry). 
To parametrise it, we cut along curves passing through the cone points and develop the metric on a plane in a polygonal form. 
One can recover the cone metrics by glueing the associated sides of the polygon back together. 
The parameters will be related to the sides of the polygon, which we use to give a set of projective coordinates. 
We will then describe the moves and the polyhedron. 
In the last part of the section, we will describe the two sets of coordinates that we will use to make the definition of the polyhedron clearer, we will analyse its cells and study its combinatorics. 

In Section \ref{sec:2f} we will specialise to the 2-fold symmetry case. 
First we will introduce the lattices we will be working with, listing the possible sets of cone angles we will be starting from. 
This is the original list from the works of Deligne and Mostow which can be found, for example, at the end of \cite{thurston} or in \cite{mostow3}.
Then we will build a new polyhedron as the union of three copies of the polyhedron described in Section \ref{sec:2fold}. 
It will be described using three sets of coordinates.
At the end of the section we will describe its sides and use the moves of Section \ref{sec:moves} to construct the side pairing maps that we need for Poincaré polyhedron theorem. 

Section \ref{sec:mainthm} is dedicated to our main theorem, which states that the polyhedron constructed (up to certain modifications) is indeed a fundamental domain for the lattices with 2-fold symmetry. 
This is proved using Poincaré polyhedron theorem and in this section we will show that all conditions in the theorem are satisfied. 
Specifically we will prove the tessellation condition for each ridge (2-dimensional facet) of the polyhedron. 
The theorem also gives us a presentation of the lattices, with the side pairings as generators. 
In the presentation the relations are given by cycle transformations which are complex reflections with certain parameters related to the lattice as their order.
When an order is positive, we have a complex reflection with respect to a complex line, when it is negative we have a complex reflection in a point, while when it is $\infty$ we have a parabolic element and a fixed point on the boundary. 
The last two cases are related to the modifications of the polyhedron that we mentioned. 
In fact, when one of the parameters is negative or infinite, a ridge collapses to a single point, on the boundary when the parameters is infinite. 
In Section \ref{sec:main} we also explain in details why this happens. 
In particular, when a particular one of the parameters is not positive and finite, we need to consider a different configuration (similar to the one in Section \ref{sec:config}, but not quite the same), which is described in Section \ref{sec:kneg}. 
Such explicit description of the polyhedron also allows us to calculate the orbifold Euler characteristic of the polyhedron, as the sum (with alternating signs with the dimension of the facets) of the order of stabiliser of one element for each orbit of facets.
Then we calculate the volume of the quotient, which is a multiple of the orbifold Euler characteristic. 
Remark that the volume we calculated is coherent with the commensurability theorems we know for these lattices (see, for example, page 15 of \cite{survey}) and the known volumes of the lattices. 

Some of our proofs are very similar to the ones in \cite{livne}, \cite{boadiparker} and \cite{irene}. 
When the exact same proof can be used, we will omit to rewrite it. 

This research was supported by a fellowship under the program "FY2015 JSPS Postoctoral Fellowship (short-term) for North American and European Researchers",  while visiting Tokyo Institute of Technology and by a Doctoral EPSRC grant, awarded by Durham University. 
I would like to thank Professor Parker for his constant inspiration and encouragement. 
I would also like to thank Professor Kojima and his laboratory for hosting me during my fellowship, for supporting my work and for the useful discussions. 
 
\section{The general construction}\label{sec:config}

This section follows Section 2 of \cite{livne}, Section 2 of \cite{boadiparker} and Section 4 of \cite{irene}. 
It generalises this procedure to when there is no symmetry given by cone points having same cone angle. 
In the first part of this section we will show how to parametrise cone metrics on the sphere of area 1 and five cone singularities. 
This is a generic construction and does not depend on whether the cone angles we choose give a lattice or not, nor on whether the cone points have same angle or not. 
The only restriction on the cone angles in this case is for the Hermitian form we obtain to have the required signature. 
In the second part of the section we will show how to build a polyhedron in the moduli space starting from the cone metrics and using the coordinates we introduced.
Finally we will describe some maps we will use, in the spirit of the moves in previous works. 
In the case of lattices with 3-fold symmetry treated in previous works, the polyhedron so constructed is a fundamental domain for the group generated by the moves, which are symmetries of the polyhedron. 
In the case of 2-fold symmetry treated in the Section \ref{sec:2f}, it will be a building block for the fundamental domain. 

\subsection{Configurations space}

Consider a cone metric on the sphere with cone points of angles $\theta_0, \theta_1, \theta_2, \theta_3$ and $\theta_4$ with $0<\theta_i<2\pi$ and $\sum (2\pi-\theta_i)=4\pi$ (discrete Gauss-Bonnet formula).
Since we have 5 cone singularities, a priori the lattices are described by 5 parameters. 

The discrete Gauss-Bonnet formula guarantees that the value of the fifth angle is determined by the previous four.
To prescribe the cone angles we will use the parameters 
\begin{align}\label{eq:defparam}
\alpha&=\frac{\theta_1}{2}, & \beta&=\frac{\theta_2}{2}, 
& \theta&=\frac{\theta_2}{2}+\frac{\theta_3}{2}-\pi, &
\phi&=\frac{\theta_0}{2}+\frac{\theta_1}{2}-\pi.
\end{align}
They have a geometric meaning which is made clear in Figure \ref{fig:thetapar}.
Then we will denote a cone metric with these cone angles as $(\alpha, \beta, \theta, \phi)$.
By definition of the parameters, we are considering a flat sphere with 5 cone singularities of angles 
\[
(2(\pi+\phi-\alpha), 2\alpha, 2\beta, 2(\pi+\theta-\beta), 2(\pi-\theta-\phi)).
\]
As one can see in the upper-left-hand side of Figure \ref{fig:thetapar}, the order of the angles is given by starting in the lower left corner and continuing counter-clockwise.
So the angle $\theta_i$ is the cone angle of the cone point $v_i$ for $i=0,1,2,3$ and  $\theta_4$ is the cone angle of the cone point $v_*$.

We now fix the cone angles (so fix a configuration $(\alpha, \beta, \theta, \phi)$) and want to parametrise all possible positions of the cone points on the sphere.
Let us first consider the easier case of when the five cone singularities are along the equator of the sphere.
Then one can cut along a geodesic passing through $v_0, v_1, v_2, v_3$ and $v_*$ in order and open up the figure obtained, getting the upper-right-hand side of Figure \ref{fig:thetapar}. 
The sphere is then the shaded region $\Pi$ in the figure, obtained by considering two big triangles $T_3$ and $T_{-3}$ and removing from it two copies of a smaller triangle $T_2$ and $T_{-2}$ and two copies of another small triangle $T_1$ and $T_{-1}$. 
Since in this case all the possible variations in the cone metric are the possible distances of the various points, one just needs a parameter describing the length of the sides of each of the three triangles to have the whole configuration (hence the cone metric) completely determined. 
Since all the angles are determined by the cone angles, it is enough to parametrise one side of each of the triangles and to do this we will use the parameters $\zzc 1$, $\zzc 2$ and $\zzc 3$ shown in the picture. 

To parametrise the general case, it is enough to allow the three real parameters $\zzc 1$, $\zzc 2$ and $\zzc 3$ to be complex (i.e. not to be aligned), since this encodes the fact that two pieces of the geodesic might not divide the cone angle they share in two equal angles. 

For more details on this description one can see Section 2.1 of \cite{livne}, Section 2.1 of \cite{boadiparker} and Section 4.1 of \cite{irene}.

\begin{figure}
\centering
\includegraphics[width=0.85\textwidth]{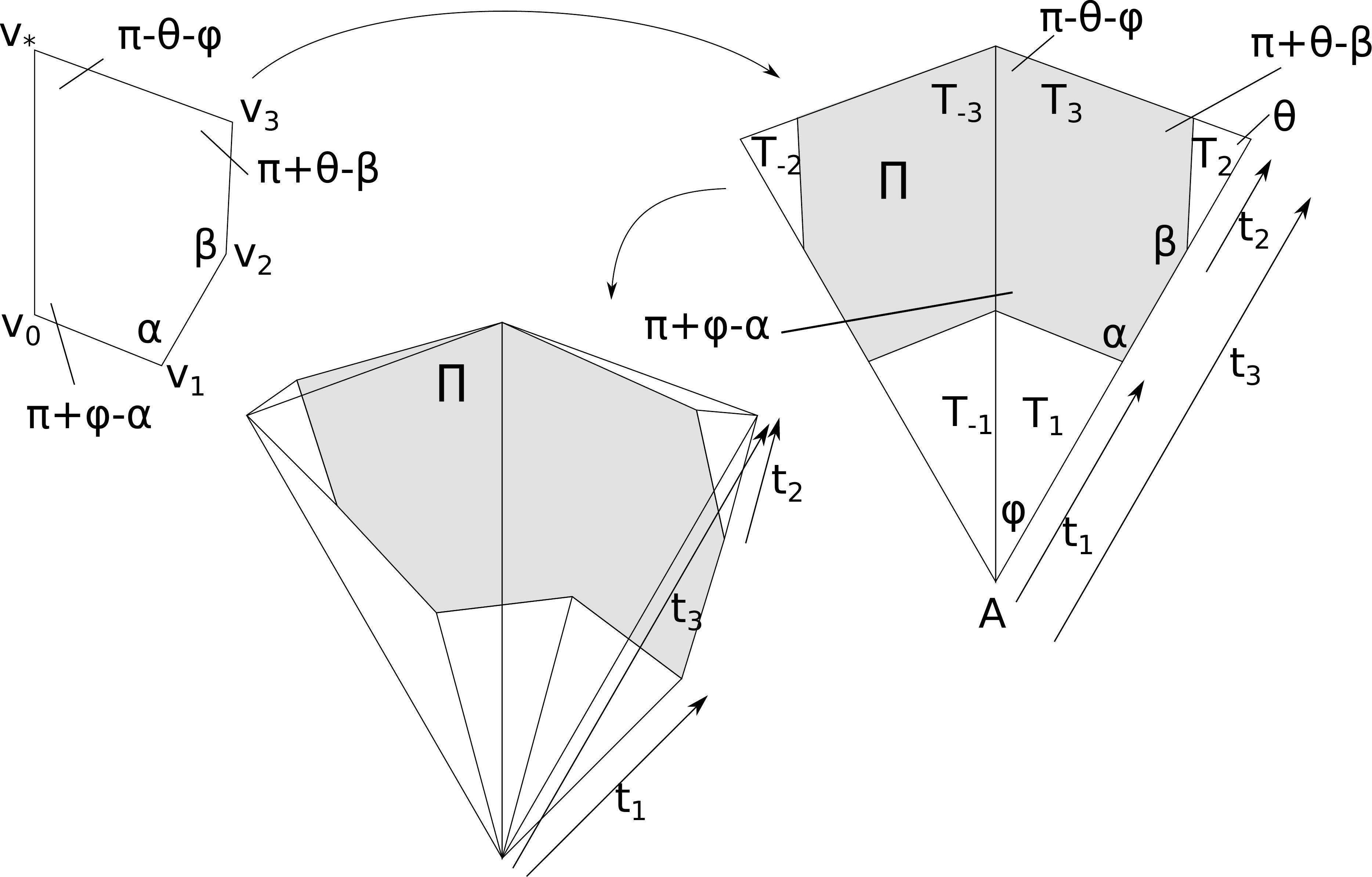}
\begin{quote}\caption{The configuration $(\alpha, \beta, \theta, \phi)$.\label{fig:thetapar}} \end{quote}
\end{figure}


Since we are interested in the cone metrics up to rescaling, we will choose to assume that $\zzc 3=1$.
Remark that this is one of the possible normalisations, different from asking from the area to be 1 (like in \cite{thurston}).

As proved by Thurston, the set of cone metrics of this type has a complex hyperbolic structure.
The Hermitian form for the complex hyperbolic structure is given by the area of the cone metric.
The area of the octagon $\Pi$ is given by 
\begin{equation}\label{eq:area}
\area \Pi= \frac{\sin \theta \sin \phi}{\sin(\theta+\phi)}|\zzc 3|^2 
-\frac{\sin \theta \sin\beta}{\sin(\beta-\theta)}|\zzc 2|^2 
-\frac{\sin \phi \sin \alpha}{\sin(\alpha-\phi)}|\zzc 1|^2
\end{equation}
So one could write the Hermitian form in matrix form as 
\[
H=
\begin{bmatrix}
-\frac{\sin \phi \sin \alpha}{\sin (\alpha -\phi)}& 0 & 0 \\
0 & -\frac{\sin \theta \sin \beta}{\sin(\beta- \theta)} & 0 \\
0 & 0 & \frac{\sin \phi\sin \theta}{\sin(\theta +\phi)}
\end{bmatrix},
\]
and say that
\[
\area \Pi=\textbf t^* H \textbf t.
\]

Since this only makes sense if the area of $\Pi $ (and so the area of the cone metric) is positive, we can write 
\[
\hc=\{ \textbf z \colon \langle \textbf z, \textbf z \rangle = \textbf z^* H \textbf z > 0 \}.
\]

\subsection{Moves}\label{sec:moves}

In this section we will introduce some maps that will have a key role in the following sections, since their compositions will be the generators of the lattices with 2-fold symmetry. 
They generalise the maps used in \cite{livne}, \cite{boadiparker} and \cite{irene}.

The move $\gR 1$ exchanges the two cone points $v_2$ and $v_3$ with their cone angles, while $\gR 2$ exchanges $v_1$ and $v_2$. 
Since the moves change the values of our parameters, we will denote the move as $\gR i(\alpha, \beta, \theta, \phi)$ to say that $\gR i \colon (\alpha, \beta, \theta, \phi) \mapsto (\alpha', \beta', \theta',\phi')$, unless the angles of the configuration we start from is obvious. 
This means, for example, that when composing two maps $T(\alpha, \beta, \theta,\phi)$ and $S(\alpha, \beta, \theta,\phi)$, we need to consider that the second map is applied to the new angles, so we are doing the composition
\begin{equation}\label{eq:composition}
S(\alpha', \beta', \theta',\phi') \circ T(\alpha, \beta, \theta,\phi)
\end{equation}
because $(\alpha, \beta, \theta, \phi) \xmapsto{T} (\alpha', \beta', \theta',\phi') \xmapsto{S} (\alpha'', \beta'', \theta'',\phi'')$.
Similarly, when calculating inverses we have 
\begin{equation}\label{eq:inverse}
[T(\alpha, \beta, \theta,\phi)]^{-1}=T^{-1}(\alpha', \beta', \theta',\phi'),
\end{equation}
since $T \colon (\alpha, \beta, \theta, \phi) \mapsto (\alpha', \beta', \theta',\phi')$ and $T^{-1} \colon (\alpha', \beta', \theta',\phi') \mapsto (\alpha, \beta, \theta, \phi)$.

The matrix of $\gR 1(\alpha, \beta, \theta, \phi)$ is now obtained from the equations $v_0'=v_0$, $v_*'=v_*$, $v_1'=v_1$, $v_3'=v_2$ and $v_{-2}'=v_3$, where the $v_i$'s are the coordinates in the $(\alpha, \beta, \theta, \phi)$ configuration and the $v_i'$'s in the $(\alpha', \beta', \theta',\phi')$ configuration. 
The matrix of $\gR 1$ is then
\begin{equation}\label{eq:R1matrix}
\gR 1(\alpha, \beta, \theta, \phi)=
\begin{bmatrix}
1 & 0 & 0 \\
0 & e^{i \theta}\frac{\sin \beta}{\sin (\beta-\theta)} & 0 \\
0 & 0 & 1 \\
\end{bmatrix}.
\end{equation}

\begin{figure}
\centering
\includegraphics[width=1\textwidth]{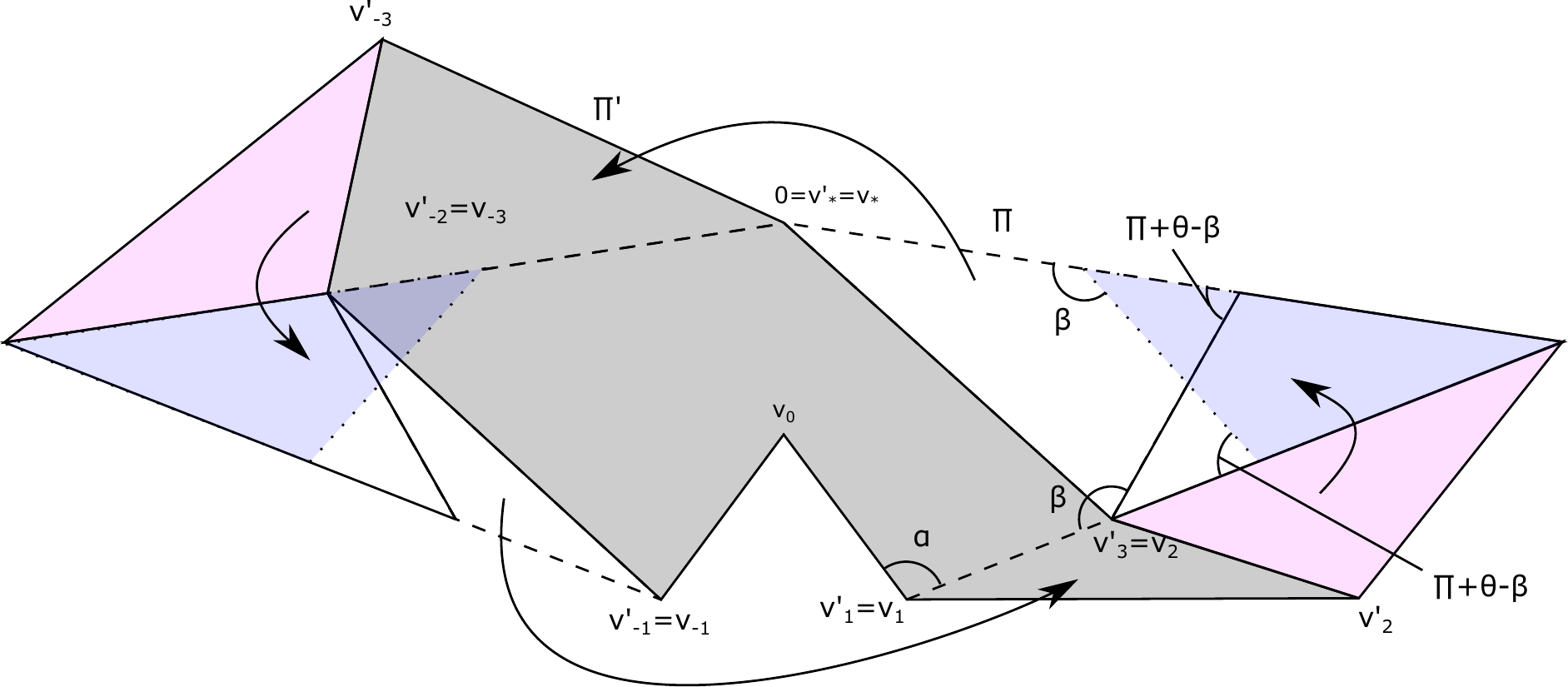}
\begin{quote}\caption{The move $\gR 1$. \label{fig:R1}} \end{quote}
\end{figure}

Similarly, one can find the matrix of $\gR 2$ by simultaneously solving the equations $v_0'=v_0$, $v_*'=v_*$, $v_2'=v_1$, $v_{-1}'=v_{-2}$ and $v_3'=v_3$ and gets: 
\begin{align} \label{eq:R2matrix}
&\gR 2(\alpha, \beta, \theta, \phi)=\frac{1}{\sin(\theta+\alpha-\beta)\sin (\phi+\beta-\alpha)} \nonumber\\
&=\begin{bmatrix}
\sin \alpha \sin\theta'e^{i(\alpha-\phi)} & 
\sin(\alpha-\phi) \sin\theta'e^{i\alpha} & 
-\sin(\alpha-\phi) \sin\theta'e^{i\alpha} \\
\sin(\beta-\theta) \sin\phi'e^{i\beta} & 
\sin\phi'\sin \beta e^{i(\beta-\theta)} & 
-\sin(\beta-\theta) \sin\phi'e^{i\beta} \\
\sin(\theta+\phi)\sin \alpha e^{i\beta} & 
\sin(\theta+\phi)\sin \beta e^{i\alpha} & 
A\\
\end{bmatrix},
\end{align}
with $\phi'=\phi+\beta-\alpha$ and $\theta'=\theta+\alpha-\beta$ and
\begin{align}\label{eq:Ainstd}
A&=\sin \theta \sin\phi'- \sin(\theta+\phi)\sin \beta e^{i\alpha} \\
&=\sin \phi \sin\theta'- \sin(\theta+\phi)\sin \alpha e^{i\beta} \nonumber \\
&\sin \theta \sin \phi \cos(\alpha-\beta)-\sin\theta\cos \phi\sin \alpha e^{i \beta}-\cos \theta\sin\phi\sin\beta e^{i\alpha}. \nonumber
\end{align}

The third move $\gA 1$ is exactly as in previous papers because it starts and lands in the same configuration and its matrix is 
\begin{equation}\label{eq:A1matrix}
\gA 1(\alpha,\beta,\theta,\phi)=\begin{bmatrix}
e^{2i \phi} & 0 & 0 \\
0 & 1 & 0 \\
0 & 0 & 1
\end{bmatrix}.
\end{equation}

\begin{figure}[t]
\centering
\includegraphics[width=1\textwidth]{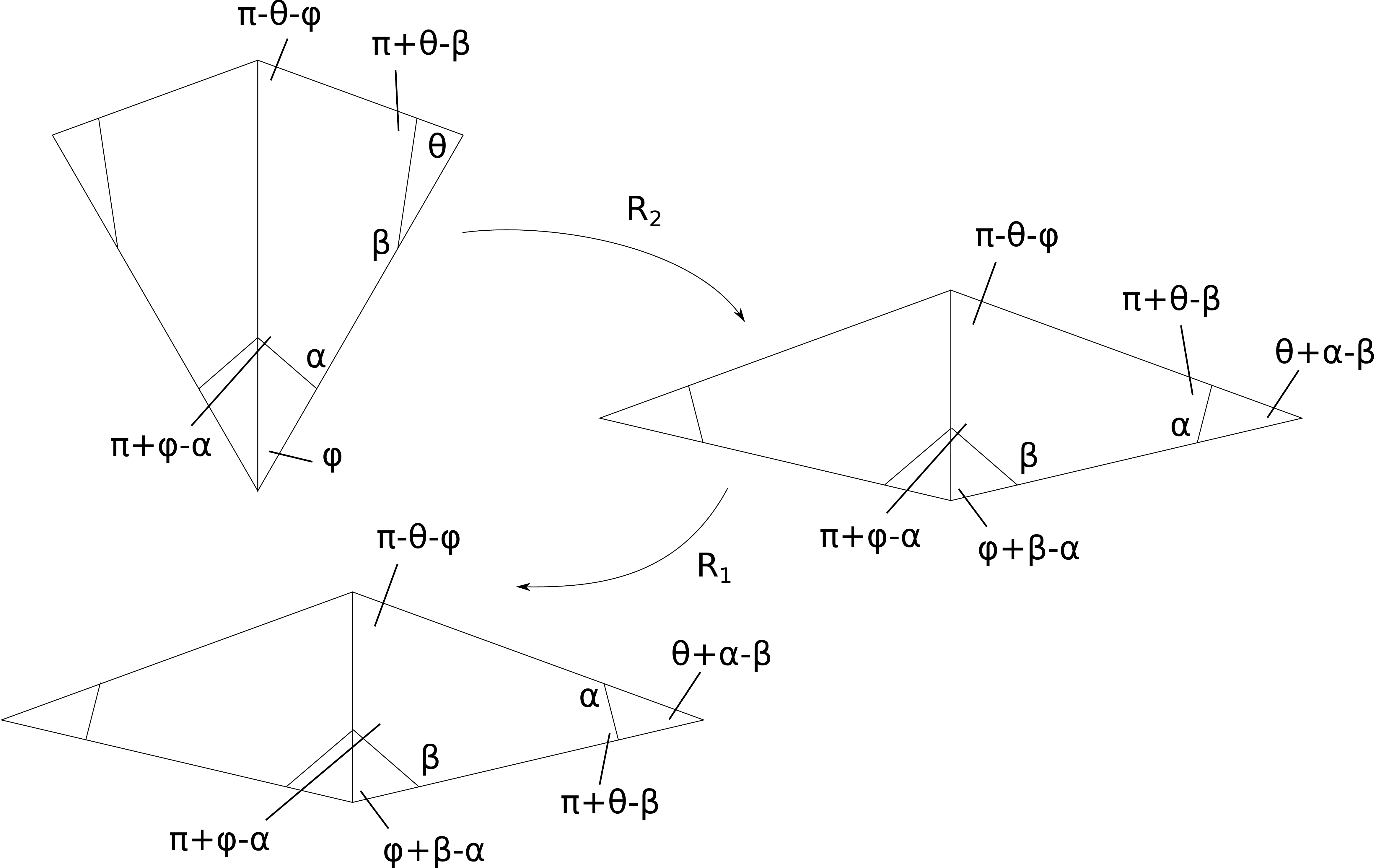}
\begin{quote}\caption{The action of $P$ on the angles. \label{fig:Paction}} \end{quote}
\end{figure}

We now want to compose the two matrices to calculate $P=\gR 1\gR 2$ and $J=P\gA 1$.
As we already mentioned, after applying the first transformation the angles change. 
Looking at Figure \ref{fig:Paction}, one can deduce that
\begin{align} \label{eq:Pmatrix}
P(\alpha, \beta, \theta,\phi)&=\gR 1(\alpha', \beta', \theta',\phi')\gR 2(\alpha, \beta, \theta,\phi) \nonumber\\
&=\gR 1(\beta, \alpha, \theta+\alpha -\beta,\phi+\beta-\alpha)\gR 2(\alpha, \beta, \theta,\phi) \nonumber\\
&=\begin{bmatrix}
\sin \alpha \sin\theta'e^{i(\alpha-\phi)} & 
\sin(\alpha-\phi) \sin\theta'e^{i\alpha} & 
-\sin(\alpha-\phi) \sin\theta'e^{i\alpha} \\
\sin \alpha \sin\phi'e^{i(\alpha+\theta)} & 
\frac{\sin\phi'\sin \beta \sin \alpha}{\sin(\beta-\theta)} e^{i\alpha} & 
-\sin \alpha \sin\phi'e^{i(\alpha+\theta)} \\
\sin(\theta+\phi)\sin \alpha e^{i\beta} & 
\sin(\theta+\phi)\sin \beta e^{i\alpha} & 
A\\
\end{bmatrix},
\end{align}
where, as before, $\phi'=\phi+\beta-\alpha$, $\theta'=\theta+\alpha-\beta$ and $A$ is as in \eqref{eq:Ainstd}.

On the other hand, $J=P\gA 1$ is easier to calculate, since $\gA 1$ does not change the type of the configuration. 
So 
\begin{align} \label{eq:Jmatrix}
&J(\alpha, \beta, \theta,\phi)=P(\alpha, \beta, \theta,\phi)\gA 1(\alpha, \beta, \theta,\phi) \nonumber\\
&=\begin{bmatrix}
\sin \alpha \sin\theta'e^{i(\alpha+\phi)} & 
\sin(\alpha-\phi) \sin\theta'e^{i\alpha} & 
-\sin(\alpha-\phi) \sin\theta'e^{i\alpha} \\
\sin \alpha \sin\phi'e^{i(\alpha+\theta+2\phi)} & 
\frac{\sin\phi'\sin \beta \sin \alpha}{\sin(\beta-\theta)} e^{i\alpha} & 
-\sin \alpha \sin\phi'e^{i(\alpha+\theta)} \\
\sin(\theta+\phi)\sin \alpha e^{i(\beta+2\phi)} & 
\sin(\theta+\phi)\sin \beta e^{i\alpha} & 
A\\
\end{bmatrix},
\end{align}
where again $\phi'=\phi+\beta-\alpha$, $\theta'=\theta+\alpha-\beta$ and $A$ is as in \eqref{eq:Ainstd}.

We remark that if we define a second set of coordinates as $\www =P^{-1} \zzz$ (as we will do later), the action of $\gR 2$ is equivalent to applying $\gR 1$ on the $\www$-coordinates. 
In other words, $\gR 2=P\gR 1 P^{-1}=\gR 1 \gR 2 \gR 1 \gR 2 ^{-1} \gR 1^{-1}$, which is equivalent to prove the braid relation
\[
\gR 1\gR 2\gR 1=\gR 2\gR 1\gR 2.
\]
Again, to calculate this composition, we need to record how the configuration changes when applying the matrices so we need to prove that the following diagram holds
\[
\begin{tikzcd}
(\alpha, \beta,\theta,\phi) \arrow{r}{\gR 1^{-1}} \arrow{d}[swap]{\gR 2}
& (\alpha, \pi+\theta-\beta,\theta,\phi) \arrow{d}{\gR 2^{-1}} \\
(\beta, \alpha, \alpha+\theta-\beta, \beta+\phi-\alpha)
& (\pi+\theta-\beta, \alpha, \alpha+\beta -\pi, \pi+\theta+\phi-\alpha-\beta) \arrow{d}{\gR 1} \\
(\beta, \pi+\theta-\beta, \alpha+\theta-\beta, \beta+\phi-\alpha) \arrow{u}{\gR 1}
& (\pi+\theta-\beta, \beta,\alpha+\beta -\pi, \pi+\theta+\phi-\alpha-\beta) \arrow{l}{\gR 2}
\end{tikzcd}
\]
which is easy to verify by simple calculation.

\subsection{The polyhedron}\label{sec:genpoly}

\subsubsection{Complex lines and vertices}

We want to study the metric completion of the moduli space. 
This means that we want to see what happens when two cone points get closer and closer until they coalesce.
The complex line $L_{ij}$ is obtained when $v_i$ and $v_j$ coalesce, for $i,j \in \{0,1,2,3,*\}$. 
Its normal vector will be denoted as $\textbf n_{ij}$.
They have equations as follow:

\begin{longtable}{|c|c|}
\hline
$L_{ij}$ & Equations in terms of the $\zzz$-coordinates \\
\hline
$L_{*0}$ & $\zzc 1=\frac{\sin(\alpha-\phi)\sin \theta}{\sin \alpha \sin (\theta+\phi)}$ \\
$L_{*1}$ & $\zzc 1=e^{-i\phi} \frac{\sin \theta}{\sin(\theta+\phi)}$ \\
$L_{*2}$ & $\zzc 2=e^{i\theta} \frac{\sin \phi}{\sin(\theta+\phi)} $ \\
$L_{*3}$ & $ \zzc 2=\frac{\sin(\beta-\theta)\sin \phi}{\sin \beta \sin(\theta+\phi)}$ \\
$L_{01}$ & $\zzc 1=0$ \\
$L_{02}$ & $\frac{\sin \alpha}{\sin(\alpha -\phi)}e^{i \phi}\zzc 1+ \zzc 2=1$ \\
$L_{03}$ & $\frac{\sin \alpha}{\sin(\alpha -\phi)}e^{i \phi}\zzc 1+ e^{-i\theta}\frac{\sin \beta}{\sin(\beta-\theta)}  \zzc 2=1$ \\
$L_{12}$ & $\zzc 1+\zzc 2=1$ \\
$L_{13}$ & $\zzc 1+ e^{-i\theta}\frac{\sin \beta}{\sin(\beta-\theta)}  \zzc 2=1$ \\
$L_{23}$ & $\zzc 2=0$ \\
\hline 
\caption{The equations defining the complex lines of two cone points collapsing.}
\label{table:lineseq}
\end{longtable}

The vertices of the polyhedron are obtained by intersecting two of these complex lines (i.e. by making two pairs of cone points coalesce) and they have coordinates as follows. 

\begin{longtable}{|c|c|c|c|}
\hline
Lines & $\zzz_k$ & $\zzc 1$ & $\zzc 2$ \\
\hline
$L_{01} \cap L_{23}$ & $\zzz_1$ 
& 0 
& 0  \\
$L_{03} \cap L_{12}$ & $\zzz_2$ 
& $\frac{\sin(\alpha-\phi)(\sin(\beta-\theta)-e^{-i\theta}\sin\beta)}
{e^{i\phi} \sin \alpha \sin (\beta-\theta)-e^{-i\theta}\sin\beta \sin(\alpha-\phi)}$ 
& $\frac{e^{i\alpha}\sin(\beta-\theta)\sin \phi }
{e^{i\phi}\sin\alpha \sin(\beta-\theta)-e^{-i\theta}\sin\beta \sin(\alpha-\phi)} $ \\
$L_{*0} \cap L_{23}$ & $\zzz_3$ & 
$\frac{\sin(\alpha-\phi) \sin \theta}{\sin \alpha \sin (\theta+\phi)}$ 
& 0 \\
$L_{*0} \cap L_{12}$ & $\zzz_4$ 
& $\frac{\sin(\alpha-\phi) \sin \theta}{\sin \alpha \sin (\theta+\phi)}$ 
& $\frac{\sin(\alpha+\theta) \sin \phi}{\sin \alpha \sin(\theta+\phi)}$ \\
$L_{*0} \cap L_{13}$ & $\zzz_5$ 
& $\frac{\sin(\alpha-\phi) \sin \theta}{\sin \alpha \sin (\theta+\phi)}$ 
& $e^{i\theta}\frac{\sin(\alpha+\theta)\sin(\beta-\theta) \sin \phi}{\sin \alpha \sin \beta \sin(\theta+\phi)}$ \\
$L_{*1} \cap L_{23}$ & $\zzz_6$ 
& $e^{-i\phi} \frac{\sin \theta}{\sin(\theta+\phi)}$ 
& 0 \\
$L_{*1} \cap L_{02}$ & $\zzz_7$ 
& $e^{-i\phi} \frac{\sin \theta}{\sin(\theta+\phi)}$ 
& $\frac{\sin (\alpha-\theta-\phi) \sin \phi}{\sin(\alpha -\phi) \sin (\theta+\phi)}$ \\
$L_{*1} \cap L_{03}$ & $\zzz_8$ 
& $e^{-i\phi} \frac{\sin \theta}{\sin(\theta+\phi)}$ 
& $e^{i\theta}\frac{\sin (\alpha-\theta-\phi)\sin(\beta-\theta) \sin \phi}
{\sin(\alpha -\phi) \sin \beta \sin (\theta+\phi)}$ \\
$L_{*3} \cap L_{01}$ & $\zzz_9$ 
& 0 
& $\frac{\sin (\beta - \theta)\sin \phi}{\sin \beta \sin(\theta+\phi)}$\\
$L_{*3} \cap L_{12}$ & $\zzz_{10}$ 
& $\frac{\sin (\beta + \phi) \sin \theta }{\sin \beta \sin(\theta+\phi) }$ 
& $\frac{\sin (\beta - \theta)\sin \phi}{\sin \beta \sin(\theta+\phi)}$\\
$L_{*3} \cap L_{02}$ & $\zzz_{11}$ 
& $e^{-i\phi}\frac{\sin(\alpha -\phi) \sin (\beta + \phi) \sin \theta}{\sin \alpha \sin \beta \sin(\theta+\phi)}$ 
& $\frac{\sin (\beta - \theta)\sin \phi}{\sin \beta \sin(\theta+\phi)}$\\
$L_{*2} \cap L_{01}$ & $\zzz_{12}$ 
& 0 
& $e^{i\theta}\frac{\sin \phi}{\sin(\theta+\phi)}$ \\
$L_{*2} \cap L_{13}$ & $\zzz_{13}$ 
& $\frac{\sin (\beta-\theta-\phi) \sin \theta}{\sin(\beta-\theta)\sin (\theta+\phi)}$
& $e^{i\theta}\frac{\sin \phi}{\sin(\theta+\phi)}$ \\
$L_{*2} \cap L_{03}$ & $\zzz_{14}$ 
& $e^{-i\phi} \frac{\sin(\alpha -\phi) \sin (\beta-\theta-\phi) \sin \theta}
{\sin \alpha\sin(\beta-\theta)\sin (\theta+\phi)}$ 
& $e^{i\theta}\frac{\sin \phi}{\sin(\theta+\phi)}$\\
\hline
\caption{The coordinates of the vertices.}
\label{table:verticespoly}
\end{longtable}

\subsubsection{Second set of coordinates}

It will be useful to define another set of coordinates in order to define the polyhedron explicitly. 
This is in the spirit of the $\textbf w$-coordinates in the previous works and is given by 
\begin{equation}\label{eq:stcoord}
\www=
\begin{bmatrix}
\wwc 1 \\ \wwc 2 \\1
\end{bmatrix}
=P^{-1}
\begin{bmatrix}
\zzc 1 \\ \zzc 2 \\1
\end{bmatrix}.
\end{equation}

\begin{figure}[t]
\centering
\includegraphics[width=1\textwidth]{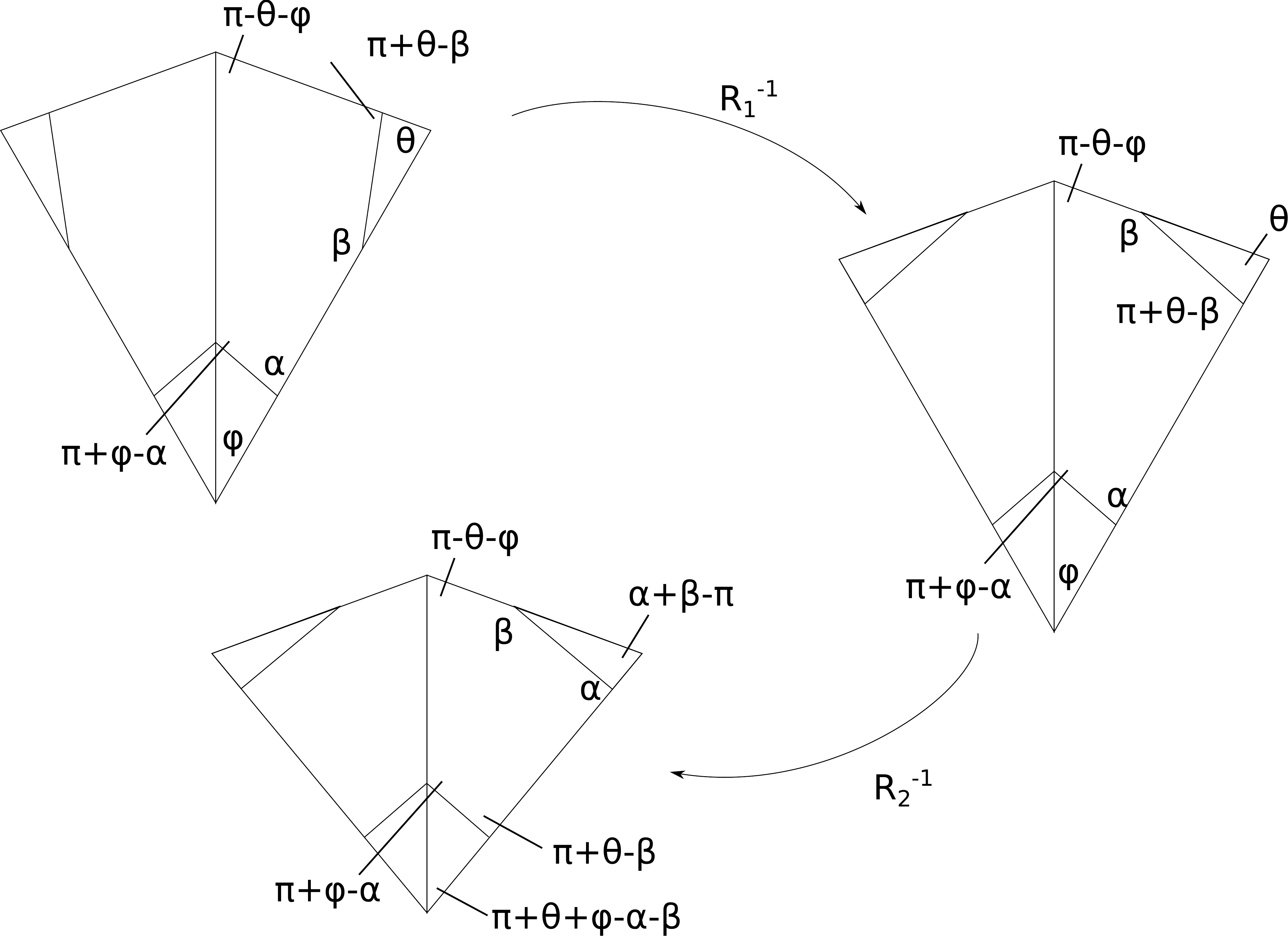}
\begin{quote}\caption{The action of $P^{-1}$ on the angles. \label{fig:P-action}} \end{quote}
\end{figure}

To calculate the $\www$-coordinates, the first thing to do is to calculate $P^{-1}(\alpha, \beta, \theta,\phi)$, with a similar argument as in Section \ref{sec:moves}.
We recall that this means that $P^{-1}$ is applied to the configuration $(\alpha, \beta, \theta,\phi)$.
As shown in Figure \ref{fig:P-action}, $P^{-1}$ acts as follows:
\begin{align}\label{eq:P-onangles}
(\alpha, \beta, \theta,\phi) &\xmapsto{\gR 1^{-1}} (\alpha', \beta', \theta',\phi')=(\alpha, \pi+\theta-\beta, \theta, \phi) \xmapsto{\gR 2^{-1}} \\
&\xmapsto{\gR 2^{-1}} (\alpha'', \beta'', \theta'',\phi'')=( \pi+\theta-\beta, \alpha, \alpha+\beta-\pi, \pi+\theta+\phi-\alpha-\beta), \nonumber
\end{align}
so 
\[
P^{-1}(\alpha, \beta, \theta,\phi)=\gR 2^{-1}(\alpha, \pi+\theta-\beta, \theta, \phi) \circ \gR 1^{-1}(\alpha, \beta, \theta,\phi).
\]

Explicitly, we have
\begin{align}\label{eq:P-matrix}
&P^{-1}(\alpha, \beta, \theta,\phi)= \nonumber \\
&=\begin{bmatrix}
-\sin \alpha \sin\theta' e^{-i(\alpha-\phi)}
&-\frac{\sin (\alpha -\phi) \sin\theta'\sin \beta}{\sin (\beta-\theta)} e^{-i(\alpha+\theta)}
&\sin (\alpha -\phi) \sin\theta' e^{-i\alpha}\\
\sin \beta \sin \phi' e^{i(\beta-\theta)}
& \sin \beta \sin \phi' e^{i(\beta-\theta)}
& -\sin \beta \sin \phi' e^{i(\beta-\theta)}  \\
\sin(\theta+\phi) \sin\alpha e^{i(\beta-\theta)}
& -\sin (\theta+\phi) \sin \beta e^{-i(\alpha+\theta)}
& A\\
\end{bmatrix},
\end{align}
where $\phi'=\pi+\theta+\phi -\alpha-\beta$, $\theta'=\alpha+\beta -\pi$ and $A$ is
\begin{align}\label{eq:Aininv}
A&=-\sin\theta'\sin \phi -\sin(\theta+\phi) \sin \alpha e^{i(\beta-\theta)} \\
&=  -\sin \phi'\sin \theta +\sin(\theta+\phi) \sin (\beta-\theta)e^{-i\alpha}.\nonumber 
\end{align}
One can easily verify that the matrices of $P(\alpha, \beta, \theta,\phi)$ and $P^{-1}(\alpha, \beta, \theta,\phi)$ in Equations \eqref{eq:Pmatrix} and \eqref{eq:P-matrix} respectively satisfy Equation \eqref{eq:inverse}.

We now apply $P^{-1}$ to the lines and vertices described in Tables \ref{table:lineseq} and \ref{table:verticespoly} to obtain their $\www$-coordinates. 

The complex lines will have $\www$-coordinates as follows.
\begin{longtable}{|c|c|}
\hline
$L_{ij}$ & Equations in terms of the $\www$-coordinates \\
\hline
$L_{*0}$ & 
$\wwc 1=-\frac{\sin(\alpha-\phi)\sin (\alpha+\beta)}{\sin (\beta-\theta) \sin (\theta+\phi)}$ \\
$L_{*1}$ & 
$\wwc 2=-e^{i(\alpha+\beta)} \frac{\sin (\alpha+\beta-\theta-\phi)}{\sin(\theta+\phi)} $  \\
$L_{*2}$ & 
$ \wwc 2=\frac{\sin(\alpha+\beta-\theta-\phi)\sin \beta}{\sin \alpha \sin(\theta+\phi)}$ \\
$L_{*3}$ & 
$\wwc 1=e^{-i(\alpha+\beta-\theta-\phi)} \frac{\sin (\alpha+\beta)}{\sin(\theta+\phi)}$\\
$L_{01}$ & 
$-\frac{\sin (\beta- \theta)}{\sin(\alpha -\phi)}e^{i(\alpha+\beta-\theta-\phi)}\wwc 1+ \wwc 2=1$ \\
$L_{02}$ & 
$-\frac{\sin (\beta- \theta)}{\sin(\alpha -\phi)}e^{i(\alpha+\beta-\theta-\phi)}\wwc 1- e^{-i(\alpha+\beta)}\frac{\sin \alpha}{\sin \beta}  \wwc 2=1$ \\
$L_{03}$ & 
$\wwc 1=0$\\
$L_{12}$ & 
$\wwc 2=0$ \\
$L_{13}$ & 
$\wwc 1+\wwc 2=1$ \\
$L_{23}$ & 
$\wwc 1- e^{-i(\alpha+\beta)}\frac{\sin \alpha}{\sin \beta} \wwc 2=1$ \\
\hline 
\caption{The equations defining the complex lines of two cone points collapsing in terms of the $\www$-coordinates.}
\label{table:lineseqwcoord}
\end{longtable}

The vertices have $\www$-coordinates as follows. 

\begin{longtable}{|c|c|c|c|}
\hline
 $\zzz_k$ & $ \wwc 1$ & $\wwc 2$ \\
\hline
$\zzz_1$ 
& $\frac{e^{-i\alpha}\sin(\alpha-\phi)\sin(\alpha+\beta) }
{ \sin (\alpha-\phi) \sin \beta-e^{-i(\theta+\phi)}\sin\alpha \sin (\beta-\theta)}$ 
& $\frac{e^{i(\beta-\theta)}\sin\beta \sin (\alpha+\beta-\theta-\phi) }
{\sin (\alpha-\phi) \sin \beta-e^{-i(\theta+\phi)}\sin\alpha \sin (\beta-\theta)} $ \\
$\zzz_2$ 
& 0 
& 0  \\
$\zzz_3$ 
& $-\frac{\sin(\alpha-\phi) \sin (\alpha+\beta)}{\sin (\beta-\theta) \sin (\theta+\phi)}$ 
& $-e^{i(\alpha+\beta)}\frac{\sin (\alpha+\theta) \sin\beta \sin (\alpha+\beta-\theta-\phi)}
{\sin (\beta-\theta) \sin \alpha \sin(\theta+\phi)}$ \\
$\zzz_4$ 
& $-\frac{\sin(\alpha-\phi) \sin (\alpha+\beta)}{\sin (\beta-\theta) \sin (\theta+\phi)}$ 
& 0 \\
$\zzz_5$ 
& $-\frac{\sin(\alpha-\phi) \sin (\alpha+\beta)}{\sin (\beta-\theta) \sin (\theta+\phi)}$ 
& $\frac{\sin (\alpha+\theta) \sin (\alpha+\beta-\theta-\phi)}{\sin (\beta-\theta) \sin(\theta+\phi)}$ \\
$\zzz_6$ 
& $\frac{\sin (\alpha+\beta) \sin (\theta+\phi-\alpha)}{\sin\beta\sin (\theta+\phi)}$
& $-e^{i(\alpha+\beta)}\frac{\sin (\alpha+\beta-\theta-\phi)}{\sin(\theta+\phi)}$  \\
$\zzz_7$ 
& $-e^{-i(\alpha+\beta-\theta-\phi)} \frac{\sin(\alpha-\phi)\sin (\alpha+\beta) \sin (\theta+\phi-\alpha)}
{\sin (\beta-\theta)\sin\beta\sin (\theta+\phi)}$ 
& $-e^{i(\alpha+\beta)}\frac{\sin (\alpha+\beta-\theta-\phi)}{\sin(\theta+\phi)}$ \\
$\zzz_8$ 
& 0 
& $-e^{i(\alpha+\beta)}\frac{\sin (\alpha+\beta-\theta-\phi)}{\sin(\theta+\phi)}$ \\
$\zzz_9$ 
& $e^{-i(\alpha+\beta-\theta-\phi)} \frac{\sin (\alpha+\beta)}{\sin(\theta+\phi)}$ 
& $\frac{\sin (\beta+\phi) \sin (\alpha+\beta-\theta-\phi)}
{\sin(\alpha-\phi) \sin (\theta+\phi)}$\\
$\zzz_{10}$ 
& $e^{-i(\alpha+\beta-\theta-\phi)} \frac{\sin (\alpha+\beta)}{\sin(\theta+\phi)}$ 
& 0\\
$\zzz_{11}$ 
& $e^{-i(\alpha+\beta-\theta-\phi)} \frac{\sin (\alpha+\beta)}{\sin(\theta+\phi)}$ 
& $-e^{i(\alpha+\beta)}\frac{\sin (\beta+\phi) \sin\beta \sin (\alpha+\beta-\theta-\phi)}
{\sin(\alpha-\phi) \sin \alpha \sin (\theta+\phi)}$\\
$\zzz_{12}$  
& $-e^{-i(\alpha+\beta-\theta-\phi)} \frac{\sin(\alpha-\phi)\sin (\theta+\phi-\beta)\sin (\alpha+\beta)}
{\sin (\beta-\theta) \sin \alpha \sin(\theta+\phi)}$ 
& $\frac{\sin \beta \sin(\alpha+\beta-\theta-\phi)}{\sin \alpha \sin(\theta+\phi)}$\\
$\zzz_{13}$ 
& $\frac{\sin (\theta+\phi-\beta) \sin (\alpha+\beta)}{\sin \alpha \sin(\theta+\phi)}$ 
& $\frac{\sin \beta \sin(\alpha+\beta-\theta-\phi)}{\sin \alpha \sin(\theta+\phi)}$ \\
$\zzz_{14}$ 
& 0 
& $\frac{\sin \beta \sin(\alpha+\beta-\theta-\phi)}{\sin \alpha \sin(\theta+\phi)}$\\
\hline
\caption{The $\textbf \www$-coordinates of the vertices.}
\label{table:verticespolywcoord}
\end{longtable}

\begin{rk}
The equations of the lines are of the same form as the ones for the $\zzz$-coordinates, except for the sign of the exponential for the $\zzc 1$-coordinate and up to substituting $(\alpha, \beta,\theta,\phi)$ with the new angles as in Figure \ref{fig:P-action}, i.e. up to substituting $(\alpha, \beta,\theta,\phi)$ with $(\alpha', \beta',\theta',\phi')=(\pi+\theta -\beta, \alpha, \alpha+\beta -\pi, \pi+\theta+\phi -\alpha-\beta)$.
The same is true for the coordinates of the vertices. 
In other words, up to remembering that $\alpha'=\pi+\theta -\beta$, $\beta'=\alpha$, $\theta'= \alpha+\beta -\pi$ and $\phi'=\pi+\theta+\phi -\alpha-\beta$ as in Figure \ref{fig:P-action}, the $\www$-coordinates can be equivalently listed as in the following tables.

\begin{longtable}{|c|c|}
\hline
$L_{ij}$ & Equations in terms of the $\www$-coordinates \\
\hline
$L_{*0}$ & $\wwc 1=\frac{\sin(\alpha'-\phi')\sin \theta'}{\sin \alpha' \sin (\theta'+\phi')}$ \\
$L_{*1}$ & $\wwc 2=e^{i\theta'} \frac{\sin \phi'}{\sin(\theta'+\phi')} $  \\
$L_{*2}$ & $ \wwc 2=\frac{\sin(\beta'-\theta')\sin \phi'}{\sin \beta' \sin(\theta'+\phi')}$ \\
$L_{*3}$ & $\wwc 1=e^{i\phi'} \frac{\sin \theta'}{\sin(\theta'+\phi')}$\\
$L_{01}$ & $\frac{\sin \alpha'}{\sin(\alpha' -\phi')}e^{-i\phi'}\wwc 1+ \wwc 2=1$ \\
$L_{02}$ & $\frac{\sin \alpha'}{\sin(\alpha' -\phi')}e^{-i\phi'}\wwc 1+ e^{-i\theta'}\frac{\sin \beta'}{\sin (\beta'-\theta')} \wwc 2=1$ \\
$L_{03}$ & $\wwc 1=0$\\
$L_{12}$ & $\wwc 2=0$ \\
$L_{13}$ & $\wwc 1+\wwc 2=1$ \\
$L_{23}$ & $\wwc 1+ e^{-i\theta'}\frac{\sin \beta'}{\sin (\beta'-\theta')} \wwc 2=1$ \\
\hline 
\caption{The equations defining the complex lines of two cone points collapsing in terms of the $\www$-coordinates and of the angles in the target configuration.}
\label{table:lineseqwcoord'}
\end{longtable}

\begin{longtable}{|c|c|c|c|}
\hline
 $\zzz_k$ & $ \wwc 1$ & $\wwc 2$ \\
\hline
$\zzz_1$ 
& $-\frac{e^{-i\beta'}\sin(\alpha'-\phi')\sin\theta' }
{\sin (\alpha'-\phi') \sin (\beta'-\theta')-e^{-i(\theta'+\phi')}\sin\alpha' \sin \beta'} $
& $-\frac{e^{-i\alpha'}\sin\phi' \sin (\beta'-\theta')}
{\sin (\alpha'-\phi') \sin (\beta'-\theta')-e^{-i(\theta'+\phi')}\sin\alpha' \sin \beta'} $ \\
$\zzz_2$ 
& 0 
& 0  \\
$\zzz_3$ 
& $\frac{\sin(\alpha'-\phi') \sin \theta'}{\sin \alpha' \sin (\theta'+\phi')}$
& $e^{i\theta'}\frac{\sin(\alpha'+\theta') \sin(\beta'-\theta') \sin \phi'}
{\sin \alpha' \sin \beta' \sin(\theta'+\phi')}$ \\
$\zzz_4$ 
& $\frac{\sin(\alpha'-\phi') \sin \theta'}{\sin \alpha' \sin (\theta'+\phi')}$
& 0 \\
$\zzz_5$ 
& $\frac{\sin(\alpha'-\phi') \sin \theta'}{\sin \alpha' \sin (\theta'+\phi')}$ 
& $\frac{\sin(\alpha'+\theta') \sin \phi'}{\sin \alpha' \sin(\theta'+\phi')}$ \\
$\zzz_6$ 
& $\frac{\sin \theta' \sin (\beta'-\theta'-\phi')}{\sin(\beta'-\theta')\sin (\theta'+\phi')}$
& $e^{i\theta'}\frac{\sin \phi'}{\sin(\theta'+\phi')}$  \\
$\zzz_7$ 
& $e^{i\phi'}\frac{\sin(\alpha' -\phi')\sin \theta' \sin (\beta'-\theta'-\phi')}
{\sin \alpha'\sin(\beta'-\theta')\sin (\theta'+\phi')}$
& $e^{i\theta'}\frac{\sin \phi'}{\sin(\theta'+\phi')}$ \\
$\zzz_8$ 
& 0 
& $e^{i\theta'}\frac{\sin \phi'}{\sin(\theta'+\phi')}$ \\
$\zzz_9$ 
& $e^{i\phi'} \frac{\sin \theta'}{\sin(\theta'+\phi')}$ 
& $\frac{\sin (\alpha'-\theta'-\phi') \sin \phi'}{\sin(\alpha' -\phi') \sin (\theta'+\phi')}$\\
$\zzz_{10}$ 
&  $e^{i\phi'} \frac{\sin \theta'}{\sin(\theta'+\phi')}$  
& 0\\
$\zzz_{11}$ 
&  $e^{i\phi'} \frac{\sin \theta'}{\sin(\theta'+\phi')}$ 
& $e^{i\theta'} \frac{\sin (\alpha'-\theta'-\phi') \sin(\beta'-\theta') \sin \phi'}
{\sin(\alpha' -\phi') \sin \beta' \sin (\theta'+\phi')}$\\
$\zzz_{12}$ 
& $e^{i\phi'}\frac{\sin(\alpha' -\phi') \sin (\beta' + \phi') \sin \theta'}
{\sin \alpha' \sin \beta' \sin(\theta'+\phi')}$ 
& $\frac{\sin (\beta' - \theta') \sin \phi'}{\sin \beta' \sin(\theta'+\phi')}$\\
$\zzz_{13}$ 
& $\frac{\sin (\beta' + \phi') \sin \theta'}{\sin \beta' \sin(\theta'+\phi')}$ 
& $\frac{\sin (\beta' - \theta') \sin \phi'}{\sin \beta' \sin(\theta'+\phi')}$ \\
$\zzz_{14}$ 
& 0 
& $\frac{\sin (\beta' - \theta') \sin \phi'}{\sin \beta' \sin(\theta'+\phi')}$\\
\hline
\caption{The $\textbf \www$-coordinates of the vertices in terms of the angles in the target configuration.}
\label{table:verticespolywcoord'}
\end{longtable}
\end{rk}

\subsubsection{The polyhedron}

Many of the vertices are contained in bisectors, and we use these bisectors to cut out a polyhedron, which will be called $D$.
In particular, and following \cite{livne}, \cite{boadiparker} and \cite{irene}, we will denote the bisectors as follows. 
\begin{center}
\begin{tabular}{|c|c|c|}
\hline
Bisector & Equation & Points in the bisector \\
\hline
$B(P)$ & $\im (\zzc 1)=0$ 
& $\zzz_1, \zzz_3,\zzz_4, \zzz_5, \zzz_{9}, \zzz_{10}, \zzz_{12}, \zzz_{13}$ \\
$B(P^{-1})$ & $\im (\wwc 1)=0 $
& $\zzz_2, \zzz_3,\zzz_4, \zzz_5, \zzz_{6}, \zzz_{8}, \zzz_{13}, \zzz_{14}$ \\
$B(J)$ & $\im (e^{i \phi} \zzc 1)=0$ 
& $\zzz_1, \zzz_6,\zzz_7, \zzz_8, \zzz_{9}, \zzz_{11}, \zzz_{12}, \zzz_{14}$ \\
$B(J^{-1})$ & $\im (e^{-i \phi} \wwc 1)=0 $
& $\zzz_2, \zzz_7,\zzz_8, \zzz_9, \zzz_{10}, \zzz_{11}, \zzz_{12}, \zzz_{14}$\\
$B(\gR 1)$ & $\im (\zzc 2)=0$ 
& $\zzz_1, \zzz_3,\zzz_4, \zzz_6, \zzz_{7}, \zzz_{9}, \zzz_{10}, \zzz_{11} $\\
$B(\gR 1^{-1})$ & $\im (e^{-i \theta} \zzc 2)=0$ 
& $\zzz_1, \zzz_3,\zzz_5, \zzz_6, \zzz_{8}, \zzz_{12}, \zzz_{13}, \zzz_{14}$\\
$B(\gR 2)$ & $\im (\wwc 2)=0 $
& $\zzz_2, \zzz_4,\zzz_5, \zzz_9, \zzz_{10}, \zzz_{12}, \zzz_{13}, \zzz_{14}$\\
$B(\gR 2^{-1})$ & $\im (e^{-i \theta} \wwc 2)=0 $
& $\zzz_2, \zzz_3,\zzz_4, \zzz_6, \zzz_{7}, \zzz_{8}, \zzz_{10}, \zzz_{11}$\\
\hline
\end{tabular}
\end{center}

Bisectors are a special type of hypersurfaces in complex hyperbolic space. 
More details about bisectors can be found in \cite{goldman}.
Bisectors are defined as the locus of points which are equidistant from two given points $\textbf{z}_i$ and $\textbf{z}_j$.
The complex line $L$ spanned by $\textbf z_i$ and $\textbf z_j$ is called the \emph{complex spine} of the bisector. 
The intersection between the complex spine and the bisector is a geodesic $\gamma \in L$, which is called the \emph{spine} of the bisector. 

Bisectors are foliated by totally geodesic subspaces in two different ways.
The first foliation is by slices. 
A \emph{slice} is a complex line that is a fibre of the map $\Pi_L$, the orthogonal projection to the complex spine $L$.
The bisector is the preimage by $\Pi_L$ of $\gamma$ and the preimage of each point of $\gamma$ is a slice. 
The second foliation is by meridians.
A \emph{meridian} is a totally geodesic Lagrangian plane containing the spine $\gamma$. 
It can also be described as the set of fixed points of an antiholomorphic involution which swaps $\textbf z_i$ and $\textbf z_j$.

Bisectors also contain Giraud discs.
For three points $\textbf{z}_i,\textbf{z}_j$ and $\textbf{z}_k$, not all contained in the same complex line, one can consider $B(\textbf z_i,\textbf z_j, \textbf z_k)$, the set of points equidistant from these three points. 
Such set is a smooth non totally geodesic disc, contained in exactly three bisectors ($B(\textbf z_i,\textbf z_j), B(\textbf z_i,\textbf z_k)$ and $B(\textbf z_j,\textbf z_k)$) and it's called a \emph{Giraud disc}.

The reason for the bisectors to be denoted as $B(T)$ is that we want the map $T$ to send the side $B(T)$ to $B(T^{-1})$. 
The following lemma shows that this is the case.

\begin{lemma}\label{lemma:bisectors}
In \emph{$\zzz$}- and \emph{$\www$}-coordinates and writing $\theta'= \alpha+\beta -\pi$ and $\phi'=\pi+\theta+\phi -\alpha-\beta$, we have 
\begin{itemize}
\item $\im(\zzc 1)\leq0$ if and only if \emph{
$\lvert \langle \zzz, \textbf n_{*1} \rangle \rvert 
\leq \lvert \langle \zzz, P^{-1}(\textbf n_{*3}) \rangle \rvert$},
\item $\im(\wwc 1)\geq0$ if and only if \emph{
$\lvert \langle \www, \textbf n_{*3} \rangle \rvert 
\leq \lvert \langle \www, P(\textbf n_{*1}) \rangle \rvert$},
\item $\im(e^{i \phi} \zzc 1)\geq0$ if and only if \emph{
$\lvert \langle \zzz, \textbf n_{*0} \rangle \rvert 
\leq \lvert \langle \zzz, J^{-1}(\textbf n_{*0}) \rangle \rvert$},
\item $\im(e^{-i \phi'} \wwc 1)\leq0$ if and only if \emph{
$\lvert \langle \www, \textbf n_{*0} \rangle \rvert 
\leq \lvert \langle \www, J(\textbf n_{*0}) \rangle \rvert$},
\item $\im(\zzc 2)\geq0$ if and only if \emph{
$\lvert \langle \zzz, \textbf n_{*2} \rangle \rvert 
\leq \lvert \langle \zzz, \gR 1^{-1}(\textbf n_{*3}) \rangle \rvert$},
\item $\im(e^{-i \theta} \zzc 2)\leq0$ if and only if \emph{
$\lvert \langle \zzz, \textbf n_{*3} \rangle \rvert 
\leq \lvert \langle \zzz, \gR 1(\textbf n_{*2}) \rangle \rvert$},
\item $\im(\wwc 2)\geq0$ if and only if \emph{
$\lvert \langle \www, \textbf n_{*1} \rangle \rvert 
\leq \lvert \langle \www, \gR 2^{-1}(\textbf n_{*2}) \rangle \rvert$},
\item $\im(e^{-i \theta'} \wwc 2)\leq0$ if and only if \emph{
$\lvert \langle \www, \textbf n_{*2} \rangle \rvert 
\leq \lvert \langle \www, \gR 2(\textbf n_{*1}) \rangle \rvert$}.
\end{itemize}
\end{lemma}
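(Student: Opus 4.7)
The plan is direct computation: for each of the eight items I will expand both inner products as explicit functions of the relevant coordinate and show that $|\langle \zzz, \textbf{n}_1\rangle|^2 - |\langle \zzz, \textbf{n}_2\rangle|^2$ reduces, after trigonometric simplification, to a real multiple of the imaginary-part expression on the left-hand side.

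First I observe that the eight statements come in four pairs related by the change of variables $\www = P^{-1}\zzz$ and, more generally, by conjugation under the side-pairing maps themselves. Since those maps intertwine the Hermitian forms attached to the source and target configurations (their matrices are recorded in \eqref{eq:R1matrix}--\eqref{eq:P-matrix}), each $\www$-statement is obtained from its $\zzz$-counterpart by substituting \eqref{eq:stcoord} and reading off the relevant equation from Table \ref{table:lineseqwcoord}. This halves the work to four independent verifications.

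For each verification I extract $\langle \zzz, \textbf{n}_{*i}\rangle$ directly from Table \ref{table:lineseq}: for instance, $\langle \zzz, \textbf{n}_{*1}\rangle$ is a nonzero scalar multiple of $\zzc 1 - e^{-i\phi}\frac{\sin\theta}{\sin(\theta+\phi)}$. For the transformed inner product I use the identity $\langle \zzz, T^{-1}\textbf{n}\rangle = \langle T\zzz, \textbf{n}\rangle$ coming from the unitarity of $T$ between source and target Hermitian forms, then substitute the matrix of $T$ and dehomogenise. The crucial observation is that the cross-coefficient (in $\zzc 2$ when the statement involves $\zzc 1$, and vice versa) cancels between the two inner products after pulling out a common scalar factor, so that both reduce to affine functions of a single coordinate. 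Subtracting the squared moduli, the quadratic and constant pieces cancel, leaving a single linear term of the shape $C \cdot \im(e^{i\psi}\zzc k)$ in which the phase $e^{i\psi}$ matches the one appearing in the statement.

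The main obstacle is the sign of the real constant $C$: the equivalence holds with the stated inequalities only when $C>0$, which forces careful tracking of the signs of expressions such as $\sin(\alpha-\phi)$, $\sin(\beta-\theta)$ and $\sin(\theta+\phi)$ that enter the Hermitian form \eqref{eq:area} and the matrices of the moves. These signs are controlled by the admissibility of $(\alpha,\beta,\theta,\phi)$ as cone parameters and by the signature condition on $H$. Apart from this sign-tracking, the computation is mechanical and closely parallels the analogous lemma in \cite{livne}, \cite{boadiparker} and \cite{irene}; the genuine novelty here is only that the extra angular parameter $\beta$ must be carried through the trigonometric identities, which lengthens the bookkeeping without requiring any new technique.
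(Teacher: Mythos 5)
Your proposal is correct and follows essentially the same route as the paper, which simply defers to the analogous computational lemmas (Lemma 4.6 of \cite{livne}, Lemma 4.2 of \cite{boadiparker}, Lemma 7.2 of \cite{irene}): a direct expansion of the two inner products, using the fact that each move intertwines the source and target Hermitian forms, so that the difference of squared moduli collapses to a positive multiple of the stated imaginary part. You also correctly isolate the two genuine subtleties the paper itself highlights, namely that $\textbf n_{*i}$ must be taken in the appropriate (source or target) configuration and that the sign of the resulting constant must be checked against the admissible ranges of $\alpha,\beta,\theta,\phi$.
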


The proof goes like the one of equivalent lemmas in the previous works. 
In particular, it can be found in Lemma 4.6 in \cite{livne}, Lemma 4.2 in \cite{boadiparker} and in Lemma 7.2 of \cite{irene}.
One just needs to remark that $\textbf n_{*i}$ depends on the configuration we are using. 
So, for example, the first line of the lemma is
\[
\lvert \langle \zzz, \textbf n_{*1}(\alpha, \beta, \theta, \phi) \rangle \rvert 
\leq \lvert \langle \zzz, P^{-1}(\textbf n_{*3}(\beta, \pi+\theta-\beta, \theta+\alpha-\beta, \phi+\beta-\alpha)) \rangle \rvert,
\]
since $(\alpha, \beta, \theta, \phi) \xmapsto{P} (\beta, \pi+\theta-\beta, \theta+\alpha-\beta, \phi+\beta-\alpha)$.
The rest is similar.

Now the polyhedron $D=D(\alpha,\beta,\theta,\phi)$ is defined as the intersection of all the half spaces in the lemma. 
More precisely, it will be
\begin{equation} \label{eq:polygeneric}
D(\alpha, \beta,\theta,\phi)= \left\{ \zzz=P(\www) \colon
\begin{array}{l l}
\arg(\zzc 1) \in (-\phi,0), & \arg(\zzc 2) \in (0,\theta),\\
\arg(\wwc 1) \in (0,\phi'), & \arg(\wwc 2) \in (0,\theta')
\end{array} \right\},
\end{equation}
where, as before, we have $\theta'= \alpha+\beta -\pi$ and $\phi'=\pi+\theta+\phi -\alpha-\beta$.

The sides (codimension 1 cells) of the polyhedron will be defined as $S(T)=D \cap B(T)$. 
Each of them is contained in one of the bisectors in the table. 


\subsection{The combinatorial structure of the polyhedron}\label{sec:degen}

We now want to study the combinatorics of the polyhedron $D(\alpha, \beta, \theta, \phi)$.

First, following \cite{irene}, we will see how the combinatorics change with the values of the angles. 
Later we will study all possible side (3-dimensional facets) intersections in order to be able to list all possible ridges (2-dimensional facets) and edges (1-dimensional facets).

According to the values of the parameters, we will have occasions where the the three vertices on $L_{*i}$ collapse to a single vertex, for $i=0,1,2,3$. 

\begin{prop}\label{prop:gencollapse}
We have
\begin{itemize}
\item the vertices on $L_{*0}$ collapse when $\alpha-\phi \geq \pi-\theta-\phi$, i.e. when $\pi-\alpha-\theta \leq 0$;
\item the vertices on $L_{*1}$ collapse when $\pi-\alpha \geq \pi-\theta-\phi$, i.e. $\alpha-\theta-\phi \leq 0$;
\item the vertices on $L_{*2}$ collapse when $\sin(\beta-\theta)/\sin \beta \leq \sin \phi / \sin(\theta+\phi)$, i.e. $\beta-\theta-\phi \leq 0$;
\item the vertices on $L_{*3}$ collapse when $\pi-\beta \leq \phi$, i.e. $\pi-\beta-\phi \leq 0$. 
\end{itemize}
\end{prop}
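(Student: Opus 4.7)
The plan is to verify each of the four bullets by a direct inspection of the coordinates listed in Table \ref{table:verticespoly}. In each case, the three vertices on the complex line $L_{*i}$ have one $\zzz$-coordinate in common (the one that is fixed by lying on $L_{*i}$), so the collapse condition reduces to showing that the remaining two distinct values of the other coordinate agree with the value $0$ attained at one of the vertices.

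First I would treat $L_{*0}$, which contains $\zzz_3, \zzz_4, \zzz_5$. All three share $\zzc 1 = \sin(\alpha-\phi)\sin\theta / (\sin\alpha\sin(\theta+\phi))$. Their $\zzc 2$-values are $0$, $\sin(\alpha+\theta)\sin\phi / (\sin\alpha\sin(\theta+\phi))$, and $e^{i\theta}\sin(\alpha+\theta)\sin(\beta-\theta)\sin\phi / (\sin\alpha\sin\beta\sin(\theta+\phi))$. Both non-zero expressions carry the common factor $\sin(\alpha+\theta) = \sin(\pi-\alpha-\theta)$, which vanishes precisely when $\pi-\alpha-\theta = 0$. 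So the three vertices coincide at the boundary value, and I interpret the weak inequality $\pi-\alpha-\theta\leq 0$ as saying the collapse has occurred (the condition $\alpha-\phi\geq\pi-\theta-\phi$ being a trivial algebraic rewriting).

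The other three bullets are entirely analogous: for $L_{*1}$, the vertices $\zzz_6,\zzz_7,\zzz_8$ share $\zzc 1$ and their $\zzc 2$-values (besides $0$) are proportional to $\sin(\alpha-\theta-\phi)$; for $L_{*2}$, the vertices $\zzz_{12},\zzz_{13},\zzz_{14}$ share $\zzc 2$ with $\zzc 1$-values proportional to $\sin(\beta-\theta-\phi)$, and the ratio $\sin(\beta-\theta)/\sin\beta \leq \sin\phi/\sin(\theta+\phi)$ is equivalent to $\sin(\beta-\theta)\sin(\theta+\phi)-\sin\beta\sin\phi \leq 0$, which via the product-to-sum identity reduces to $\sin\theta\sin(\beta-\theta-\phi)\leq 0$, giving $\beta-\theta-\phi\leq 0$; and for $L_{*3}$, the vertices $\zzz_9,\zzz_{10},\zzz_{11}$ share $\zzc 2$ with $\zzc 1$-values proportional to $\sin(\beta+\phi) = \sin(\pi-\beta-\phi)$.

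I do not anticipate any real obstacle here: the four statements are purely computational, reading off a single trigonometric factor from Table \ref{table:verticespoly} for each bullet. The only mildly non-trivial step is the algebraic manipulation in the $L_{*2}$ case, where the stated inequality on ratios of sines must be rewritten using $\sin(\beta-\theta)\sin(\theta+\phi)-\sin\beta\sin\phi = \sin\theta\sin(\beta-\theta-\phi)$; this is a standard product-to-sum calculation and carries no subtlety beyond keeping track of signs in the appropriate angle range.
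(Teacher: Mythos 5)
Your proposal is correct, and all four trigonometric computations check out (including the product-to-sum identity $\sin(\beta-\theta)\sin(\theta+\phi)-\sin\beta\sin\phi=\sin\theta\sin(\beta-\theta-\phi)$ needed for the $L_{*2}$ bullet). However, it takes a genuinely different route from the paper. The paper argues geometrically on the developed triangle configuration of Figure \ref{fig:thetapar}: for $L_{*0}$, one enlarges $T_1$ and observes that if $\alpha-\phi\geq\pi-\theta-\phi$ then the cone point $v_1$ hits the left-hand vertex of $T_2$ (forcing $v_1\equiv v_2\equiv v_3$) before $v_0\equiv v_*$ can be achieved, so that $\zzc 2=0$ is the only admissible value and the three would-be vertices reduce to one; the other bullets are stated to follow similarly. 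You instead read the coordinates directly off Table \ref{table:verticespoly} and isolate the common vanishing factor ($\sin(\alpha+\theta)$, $\sin(\alpha-\theta-\phi)$, $\sin(\beta-\theta-\phi)$, $\sin(\beta+\phi)$ respectively). Your route is more mechanical and makes all four cases uniformly explicit, which the paper does not; what it buys less cleanly is the strict-inequality regime. Your factor argument literally shows coincidence of the three vertices only at the equality $\pi-\alpha-\theta=0$ (etc.); for $\pi-\alpha-\theta<0$ the factor is negative rather than zero, so the tabulated $\zzc 2$-values are nonzero with argument outside the range $[0,\theta]$ allowed by \eqref{eq:polygeneric}, meaning those configurations are simply not realisable and the degenerate vertex with $\zzc 2=0$ is the only one that survives. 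The paper's geometric picture delivers exactly this last point for free, whereas in your write-up it is "interpreted" rather than derived; you should add the one sentence identifying why a negative factor also forces the collapse. With that addition your argument is complete and arguably more verifiable than the paper's.
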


In fact, for example, the vertices $\zzz_3$, $\zzz_4$ and $\zzz_5$ on $L_{*0}$ collapse if, when making $T_1$ as big as possible, before we can have $v_0 \equiv v_*$, we have that $v_1$ hits the left-hand vertex of $T_2$ and so $v_1 \equiv v_2 \equiv v_3$. 
This implies that there is no other choice for $z_2$ but to be zero, instead of having the three choices that give the three possible vertices having $v_0 \equiv v_*$.
Translated on the parameters, this gives that $\alpha-\phi \geq \pi-\theta-\phi$.
The others can be verified in a similar way.

We remark that the sides all have the same combinatorial structure. 
In particular, they will look like in Figure \ref{fig:sidecombinatorics}.
This is the same structure as the one of the sides of the polyhedron in \cite{irene} and first appeared as the combinatorial structure of 2 of the 10 sides in \cite{type2}. 
Each side will correspond to fixing the argument of one of the coordinates. 
Then there will be one triangular ridge (e.g. the bottom one) where the coordinate is equal to zero and a second triangular ridge (e.g. the top one) where the coordinate has another fixed value. 
The complex lines interpolating between the two will be the slices of the foliation mentioned earlier in this section.
The edge connecting the two triangles is contained in the complex spine of the bisector and always contains one of the vertices $\zzz _1$ or $\zzz _2$. 
The pentagonal side ridges containing the vertical edge are contained in totally geodesic Lagrangian planes and are the extremities of the foliation by meridians. 
We claim that in each side the modulus of the coordinate we are considering varies between the two values it assumes on the top and bottom triangular ridges. 
To check this, for example, in $S(J)$, we need to check that $| \zzc 1 |$ in $\zzz_{11}$ and $\zzz_{14}$ is smaller than $|t_1|$ in $\zzz_6$, $\zzz_7$ and $\zzz_8 $ ($|t_1|$ has the same value in these three vertices, since they are contained in the complex line $L_{*1}$) and so on. 
It is easy to check that this is true for each side as long as 
\begin{align}\label{eq:PP-possible}
\sin (\alpha+\beta-\pi) &\geq 0 
& \sin(\pi+\alpha+\beta-\theta-\phi) &\geq 0 \nonumber\\
 \sin(\alpha+\theta-\beta) &\geq 0
& \sin(\beta+\phi-\alpha) &\geq 0.
\end{align}
Remembering the action of $P$ and $P^{-1}$ on the angles, this means that we are just asking for the configuration after applying these two maps to make sense in our coordinates. 

\begin{figure}
\centering
\includegraphics[width=0.5\textwidth]{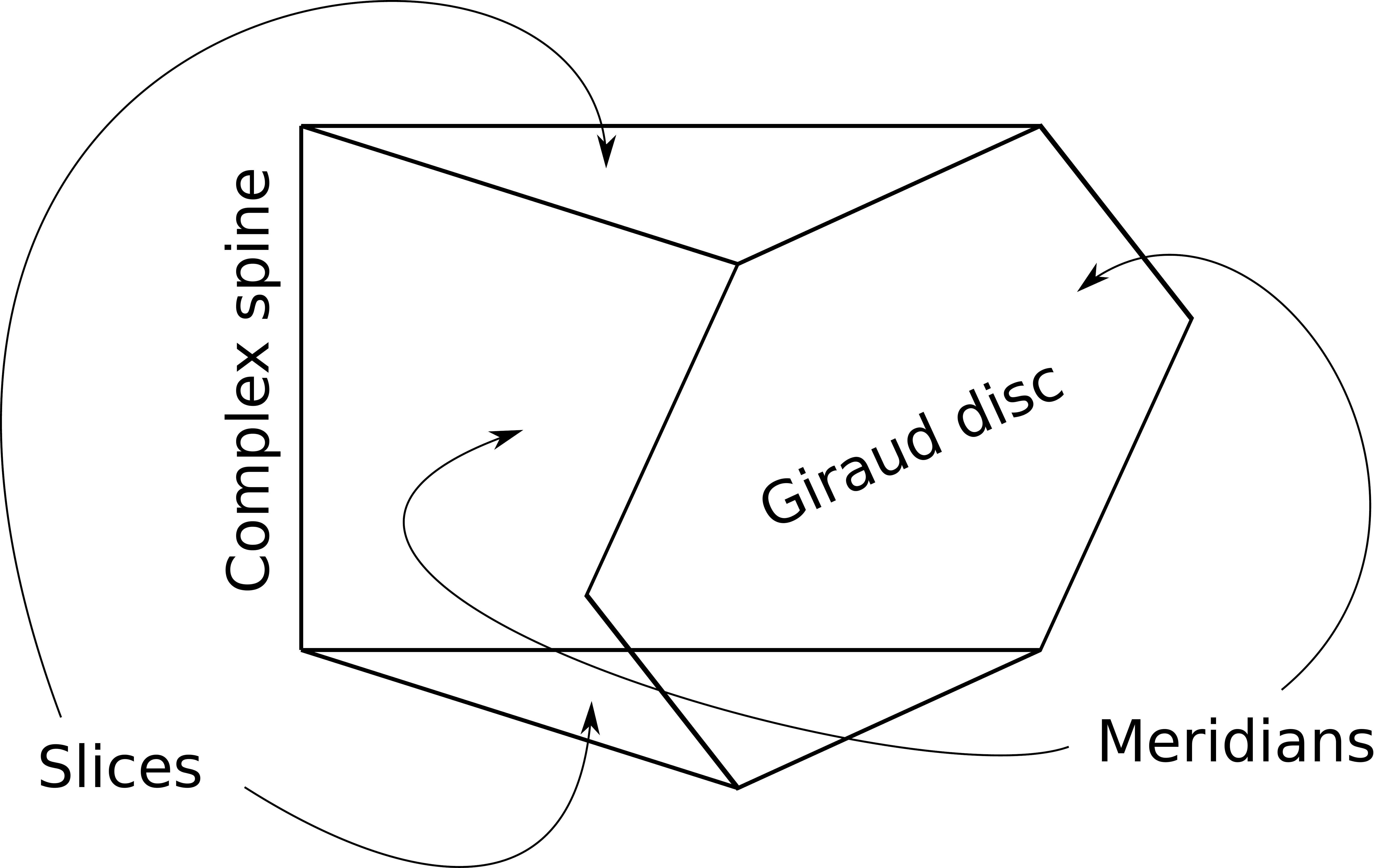}
\begin{quote}\caption{The combinatorial structure of a side. \label{fig:sidecombinatorics}} \end{quote}
\end{figure}

This gives us the following lemma:
\begin{lemma}\label{lemma:sidecombinatorics}
If the parameters satisfy \eqref{eq:PP-possible}, then 
\begin{itemize}
\item In $SP)$, we have 
$| \zzc 1| \leq \frac{\sin(\alpha-\phi) \sin\theta}{\sin \alpha \sin(\theta+\phi)}$,
\item In $S(J)$, we have 
$| \zzc 1| \leq \frac{\sin\theta}{\sin(\theta+\phi)}$,
\item In $S(\gR 1)$, we have 
$| \zzc 2| \leq \frac{\sin(\beta-\theta)\sin \phi}{\sin\beta \sin(\theta+\phi)}$,
\item In $S(\gR 1^{-1})$, we have 
$| \zzc 2| \leq \frac{\sin\phi}{\sin(\theta+\phi)}$,
\item In $S(P^{-1})$, we have 
$| \wwc 1| \leq -\frac{\sin(\alpha-\phi) \sin(\alpha+\beta)}{\sin(\beta-\theta) \sin(\theta+\phi)}$,
\item In $S(J^{-1})$, we have 
$| \wwc 1| \leq -\frac{\sin(\alpha+\beta)}{\sin(\theta+\phi)}$,
\item In $S(\gR 2)$, we have 
$| \wwc 2| \leq \frac{\sin(\alpha+\beta-\theta-\phi) \sin \beta}{\sin \alpha \sin(\theta+\phi)}$,
\item In $S(\gR 2^{-1})$, we have 
$| \wwc 2| \leq \frac{\sin(\alpha+\beta-\theta-\phi) \sin \beta}{\sin \alpha \sin(\theta+\phi)}$,
\end{itemize}
\end{lemma}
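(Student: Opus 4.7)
\textbf{Proof plan for Lemma \ref{lemma:sidecombinatorics}.}
The plan is to reduce the lemma on each side to a finite vertex check, and then resolve each resulting inequality by one application of a product-to-sum identity, matching it against one of the four hypotheses in \eqref{eq:PP-possible}.

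Fix a side, say $S(P) = D \cap B(P)$, where $B(P) = \{\im(\zzc 1) = 0\}$. The slice foliation of $B(P)$ is by the complex lines $\zzc 1 = c$ with $c \in \R$; on each such slice $|\zzc 1|$ is constant. Hence $|\zzc 1|$ restricted to $S(P)$ depends only on the position along the real spine of $B(P)$, so its range over $S(P)$ is an interval. The bottom triangular ridge in Figure \ref{fig:sidecombinatorics} lies on the slice $\zzc 1 = 0$, while the top triangular ridge lies on the complex line $L_{*0}$, where $\zzc 1$ takes the claimed value $\frac{\sin(\alpha-\phi)\sin\theta}{\sin\alpha\sin(\theta+\phi)}$. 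Consequently the stated value is a lower bound for $\max|\zzc 1|$; to get equality it suffices to verify the inequality at each of the remaining vertices of $S(P)$.

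Reading Table \ref{table:verticespoly}, the non-top vertices of $S(P)$ are $\zzz_1, \zzz_9, \zzz_{12}$ (for which $\zzc 1 = 0$, giving the bound trivially) together with two \emph{middle} vertices $\zzz_{10}$ and $\zzz_{13}$. At $\zzz_{10}$ the bound reduces to $\sin\alpha\sin(\beta+\phi) \leq \sin(\alpha-\phi)\sin\beta$, whose difference equals $\sin\phi\sin(\alpha+\beta-\pi)$ by product-to-sum, and this is $\geq 0$ by the first condition of \eqref{eq:PP-possible}. At $\zzz_{13}$ the bound, in the range where $\sin(\beta-\theta)$ and $\sin(\beta-\theta-\phi)$ are positive, reduces to $\sin\alpha\sin(\beta-\theta-\phi) \leq \sin(\alpha-\phi)\sin(\beta-\theta)$, whose difference equals $\sin\phi\sin(\alpha+\theta-\beta)$, nonnegative by the third condition of \eqref{eq:PP-possible}. (In parameter ranges where a listed vertex has a defining coefficient of the wrong sign, the argument of $\zzc 1$ or $\zzc 2$ falls outside the interval prescribed by \eqref{eq:polygeneric}, so such a vertex simply does not lie in $D$ and does not need to be checked.)

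The remaining seven sides are handled by the identical mechanism: for $S(J), S(\gR 1), S(\gR 1^{-1})$ one uses Table \ref{table:verticespoly}, and for $S(P^{-1}), S(J^{-1}), S(\gR 2), S(\gR 2^{-1})$ one uses Table \ref{table:verticespolywcoord} together with the substitutions $(\alpha',\beta',\theta',\phi')$ of Figure \ref{fig:P-action}. In every case the two middle vertices give inequalities which, via a single application of the identity
\[
\sin(x+\phi)\sin(y+\phi) - \sin x \sin y = \sin\phi\,\sin(x+y+\phi)
\]
(and its obvious variants where $\phi$ is replaced by $\theta$, $\theta'$ or $\phi'$), collapse to one of the four inequalities in \eqref{eq:PP-possible}. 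The main obstacle is not conceptual but combinatorial bookkeeping: one must tabulate, for each of the eight sides, which two middle vertices arise and which of the four conditions of \eqref{eq:PP-possible} is invoked. The uniformity of the outcome — each of the sixteen checks reduces to exactly one entry of \eqref{eq:PP-possible} — is what identifies \eqref{eq:PP-possible} as the precise hypothesis making the lemma tight.
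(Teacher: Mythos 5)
Your proposal is correct and follows essentially the same route as the paper, which likewise reduces the claim on each side to checking that $|\zzc 1|$ (resp.\ $|\zzc 2|$, $|\wwc 1|$, $|\wwc 2|$) at the two middle vertices is bounded by its constant value on the top triangular ridge, and observes that these checks hold exactly under \eqref{eq:PP-possible}. Your explicit verifications at $\zzz_{10}$ and $\zzz_{13}$ (differences $\sin\phi\sin(\alpha+\beta-\pi)$ and $\sin\phi\sin(\alpha+\theta-\beta)$) are correct and simply make precise the computation the paper leaves as ``easy to check.''
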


We now want to consider all possible side intersections to find the combinatorics of the polyhedron. 
We will denote by $\gamma_{i,j}$ the geodesic segment between the vertices $\zzz _i$ and $\zzz _j$.

\begin{prop}
The following side intersections consist of the union of two edges: 
\begin{align*}
S(P) \cap S(J^{-1})&= \gamma_{10,9} \cup \gamma_{9,12}, &
S(\gR 1^{-1}) \cap S(J^{-1}) &= \gamma_{8,14} \cup \gamma_{14,12}, \\
S(P) \cap S(\gR 2^{-1})&= \gamma_{3,4} \cup \gamma_{4,10}, &
S(J) \cap S(\gR 2)&= \gamma_{9,12} \cup \gamma_{12,14}, \\
S(\gR 1) \cap S(\gR 2) &= \gamma_{4,10} \cup \gamma_{10,9}, &
S(J) \cap S(P^{-1}) &= \gamma_{6,8} \cup \gamma_{8,14}, \\
S(\gR 1) \cap S(P^{-1}) &= \gamma_{4,3} \cup \gamma_{3,6}, &
S(\gR 1^{-1}) \cap S(\gR 2^{-1}) &= \gamma_{3,6} \cup \gamma_{6,8}.
\end{align*}
\end{prop}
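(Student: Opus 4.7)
The plan is to treat the eight cases uniformly by (a) reading off the common vertices of the two sides, (b) identifying for each claimed edge a complex line $L_{k\ell}$ containing both endpoints, (c) verifying that this complex line is a common slice of the two bisectors, and (d) ruling out any further points in the intersection using the modulus bounds of Lemma \ref{lemma:sidecombinatorics}.

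\medskip

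For each pair $(S(T_1),S(T_2))$ appearing in the statement, the two ``Points in the bisector'' lists given for $B(T_1)$ and $B(T_2)$ share exactly three vertices. For example, $B(P)$ and $B(J^{-1})$ share $\{\zzz_9,\zzz_{10},\zzz_{12}\}$, exactly the three endpoints of $\gamma_{10,9}\cup\gamma_{9,12}$; the other seven cases are read off identically from the same table. Next, for each segment $\gamma_{i,j}$ I identify a complex line $L_{k\ell}$ containing both endpoints using Table \ref{table:verticespoly}: for instance $\zzz_{10},\zzz_9\in L_{*3}$ (common second coordinate), while $\zzz_9,\zzz_{12}\in L_{01}$ (common first coordinate $\zzc 1=0$), and similarly for the fourteen other endpoint pairs. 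Since $L_{k\ell}$ is a complex geodesic, the geodesic $\gamma_{i,j}$ automatically lies in $L_{k\ell}$.

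\medskip

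To deduce $\gamma_{i,j}\subseteq S(T_1)\cap S(T_2)$, I then verify that each such $L_{k\ell}$ is a common slice of $B(T_1)$ and $B(T_2)$, i.e.\ that the defining imaginary-part equation of each bisector vanishes identically on $L_{k\ell}$. Using Tables \ref{table:lineseq} and \ref{table:lineseqwcoord} (or the $\www$-variants in Table \ref{table:lineseqwcoord'}) this reduces to checking that the parametrisation of $L_{k\ell}$ makes the relevant coordinate real up to the prescribed phase: for instance on $L_{*3}$ the coordinate $\zzc 1$ is manifestly real so $\im(\zzc 1)=0$, and a short computation from the $\www$-equation of $L_{*3}$ gives $\im(e^{-i\phi'}\wwc 1)=0$. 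Each of the required line--bisector incidences is an elementary sine identity.

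\medskip

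The main obstacle is the converse containment: showing that no further points of $D\cap B(T_1)\cap B(T_2)$ exist outside the listed edges. For this I combine the four argument constraints of \eqref{eq:polygeneric} defining $D$ with the sharp modulus inequalities of Lemma \ref{lemma:sidecombinatorics}. On $S(T_1)\cap S(T_2)$ two of the four arguments are pinned to their extremal values, and the Lemma \ref{lemma:sidecombinatorics} bounds then force one of the moduli $|\zzc k|$ or $|\wwc k|$ to attain its extremal value simultaneously. By Tables \ref{table:lineseq} and \ref{table:lineseqwcoord} each extremal value is precisely the defining equation of a complex line $L_{k\ell}$, and a short case analysis (analogous to the ridge enumerations in \cite{irene} and \cite{boadiparker}) shows that only the two complex lines identified in step (b) can arise. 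Within each such line the intersection with $D$ collapses to the geodesic between the two corresponding vertices, yielding the claimed union of two edges.
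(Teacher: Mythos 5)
Your steps (a) and (b) are sound bookkeeping: for each of the eight pairs the two vertex lists in the bisector table share exactly three vertices, and each claimed edge joins two vertices lying on a common complex line. But step (c) is already wrong as stated. Take $\gamma_{10,9}\subset L_{*3}$ in the case $S(P)\cap S(J^{-1})$: the line $L_{*3}$ is $\zzc 2=\frac{\sin(\beta-\theta)\sin\phi}{\sin\beta\sin(\theta+\phi)}$ with $\zzc 1$ completely free, so $\im(\zzc 1)$ does \emph{not} vanish identically on $L_{*3}$ and $L_{*3}$ is not a slice of $B(P)$. It is a slice of $B(J^{-1})$ (its $\www$-equation pins $\wwc 1$ to $e^{i\phi'}$ times a real constant), but it meets $B(P)$ only in a real geodesic. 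The containment $\gamma_{10,9}\subset B(P)$ is still true, but for a different reason: both endpoints have real $\zzc 1$, and the geodesic of the complex line $L_{*3}$ through two such points stays in $\{\im \zzc 1=0\}$ because the Hermitian form is real diagonal. In every one of the sixteen edge--line incidences exactly one of the two bisectors fails to have the line as a slice, so this repair is needed throughout.

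The serious gap is step (d), which is the entire content of the proposition. Two sides are $3$-dimensional, so a priori their intersection is a $2$-dimensional ridge --- for instance a triangle on the three common vertices $\zzz_9,\zzz_{10},\zzz_{12}$ --- and the proposition asserts precisely that this does not happen. Your argument for this is the assertion that pinning $\arg \zzc 1$ and $\arg \wwc 1$ to their extremal values ``forces one of the moduli to attain its extremal value'' by Lemma \ref{lemma:sidecombinatorics}. That lemma only gives upper bounds on $\lvert \zzc k\rvert$ and $\lvert \wwc k\rvert$ inside each side; an upper bound cannot force attainment, and no mechanism is offered that produces the required equality. What actually does the work --- this is the argument of Appendix A of \cite{livne} and Proposition 7.8 of \cite{irene} that the paper invokes, and it is modelled explicitly in the proof of the proposition immediately following this one --- is a computation: on the intersection the two pinned coordinates become real parameters $x$ and $u$, the remaining coordinates $\zzc 2$ and $\wwc 2$ are expressed as explicit rational functions of $x$ and $u$ via $\www=P^{-1}\zzz$, and the sign constraints $\arg(\zzc 2)\in[0,\theta]$, $\arg(\wwc 2)\in[0,\theta']$ from \eqref{eq:polygeneric} (equivalently Lemma \ref{lemma:bisectors}) force a common numerator to be simultaneously nonnegative and nonpositive, hence zero, placing the point on further bisectors and therefore on one of the two listed edges. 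Without carrying out this computation (or a genuine substitute) for each of the eight pairs, the converse inclusion is unproved.
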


The proof of this proposition follows exactly the one in Appendix A of \cite{livne} and Proposition 7.8 in \cite{irene}.

\begin{prop}
The bisectors satisfy:
\begin{itemize}
\item A point \emph{$\zzz$} in the side intersection $S(P) \cap S(P^{-1})$, with $\zzc 1 \neq \frac{\sin \theta \sin (\alpha - \phi)}{\sin \alpha \sin (\theta+\phi)}, \wwc 1 \neq -\frac{\sin (\alpha+\beta) \sin (\alpha - \phi)}{\sin(\beta-\theta)\sin (\theta+\phi)}$, belongs to the edge $\gamma_{5,13}$.
\item A point \emph{$\zzz$} in the side intersection $S(J) \cap S(\gR 2^{-1})$, with $\zzc 1 \neq e^{-i \phi} \frac{\sin \theta}{\sin (\theta+\phi)}$ and $ \wwc 2 \neq -e^{i (\alpha+\beta)} \frac{\sin (\alpha+\beta-\theta-\phi)}{\sin (\theta+\phi)}$, belongs to the edge $\gamma_{7,11}$.
\item Moreover, a point \emph{$\zzz$} in the side intersection $S(\gR 2) \cap S(\gR 1^{-1})$, with $\zzc 2 \neq e^{i \theta}\frac{\sin \phi}{\sin (\theta+\phi)}$ and $\wwc 2 \neq \frac{\sin (\alpha+\beta-\theta-\phi) \sin \beta}{\sin \alpha\sin (\theta+\phi)}$, belongs to the edge $\gamma_{5,13}$.
\item Finally, a point \emph{$\zzz$} in the side intersection $S(\gR 1) \cap S(J^{-1})$, with $\zzc 2 \neq \frac{\sin(\beta-\theta)\sin \phi}{\sin \beta\sin (\theta+\phi)}$ and $\wwc 1 \neq e^{-i (\alpha+\beta-\theta-\phi)} \frac{\sin (\alpha+\beta)}{\sin (\theta+\phi)}$, belongs to the edge $\gamma_{7,11}$.
\end{itemize}
\end{prop}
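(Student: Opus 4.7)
The plan is to adapt the arguments given for the analogous statements in Appendix A of \cite{livne} and in \cite{irene} to our asymmetric four-parameter setting. All four bullets share a common structure: each pair of bisectors $(B(T_1), B(T_2))$ intersects inside $D$ in two distinct components, one of which is a triangular ridge lying in some complex line $L_{*i}$, while the other is the predicted edge. The two hypotheses in each bullet are precisely the equations cutting out the complex line $L_{*i}$ in the $\zzz$- and $\www$-coordinates respectively, read off from Tables \ref{table:lineseq} and \ref{table:lineseqwcoord}, so the trivial component is excluded by assumption.

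To illustrate, consider the first bullet, $S(P) \cap S(P^{-1})$. Inspecting the bisector table one sees that $B(P)$ and $B(P^{-1})$ both contain $\zzz_3, \zzz_4, \zzz_5$, which are exactly the three vertices on $L_{*0}$; hence $L_{*0}$ is a component of $B(P) \cap B(P^{-1})$. The two hypotheses
\[
\zzc 1 \neq \tfrac{\sin\theta \sin(\alpha-\phi)}{\sin\alpha \sin(\theta+\phi)}, \qquad
\wwc 1 \neq -\tfrac{\sin(\alpha+\beta) \sin(\alpha-\phi)}{\sin(\beta-\theta) \sin(\theta+\phi)}
\]
are precisely the defining equations of $L_{*0}$ in the two coordinate systems, so this component is excluded. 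The only remaining vertices common to the two bisector lists are $\zzz_5$ and $\zzz_{13}$; since both lie on $L_{13}$ (namely $\zzz_5 = L_{*0} \cap L_{13}$ and $\zzz_{13} = L_{*2} \cap L_{13}$), the remaining component must be the geodesic segment $\gamma_{5,13} \subset L_{13}$. The other three bullets are handled in the same way: the common complex line to be excluded is $L_{*1}$ for $S(J) \cap S(\gR 2^{-1})$, giving $\gamma_{7,11} \subset L_{02}$; $L_{*2}$ for $S(\gR 2) \cap S(\gR 1^{-1})$, giving $\gamma_{5,13} \subset L_{13}$; and $L_{*3}$ for $S(\gR 1) \cap S(J^{-1})$, giving $\gamma_{7,11} \subset L_{02}$. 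In each case a direct check against Tables \ref{table:lineseq} and \ref{table:lineseqwcoord} confirms that the two excluded coordinate values are exactly the equations of the corresponding $L_{*i}$.

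For the explicit verification that $B(T_1) \cap B(T_2)$ has no further components, we write the two real bisector equations $\im(\zzc k) = 0$ and $\im(\wwc l) = 0$ using the matrices $P$ and $P^{-1}$ of Equations \eqref{eq:Pmatrix} and \eqref{eq:P-matrix}. Clearing denominators and combining the two equations produces an algebraic identity that factors as a product of two linear expressions: one vanishing on $L_{*i}$ and the other on the complex line ($L_{13}$ or $L_{02}$) carrying the predicted edge. Restricted to $D$, the second factor reduces to precisely the geodesic segment whose endpoints, computed from Tables \ref{table:verticespoly} and \ref{table:verticespolywcoord}, are the announced $\zzz_i$ and $\zzz_j$.

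The main obstacle is the bookkeeping in this factorisation: the entries of $P$ and $P^{-1}$ recorded in \eqref{eq:Ainstd} and \eqref{eq:Aininv} are considerably bulkier than those appearing in the $3$-fold symmetric cases treated in \cite{livne}, \cite{boadiparker} and \cite{irene}, so the intermediate product-to-sum identities are more involved. Nevertheless, the logical skeleton is identical to the proofs cited there, and once the factorisation is carried out for the first bullet, the remaining three follow by the same relabelling used to identify their combinatorial structure.
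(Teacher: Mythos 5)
Your proposal rests on a claim that cannot hold: that $B(P)\cap B(P^{-1})$ factors (after clearing denominators) into the union of the complex line $L_{*0}$ and the complex line $L_{13}$ carrying the edge. A complex line contained in a bisector must be a slice of that bisector, i.e.\ of the form $\zzc 1=\mathrm{const}$ for $B(P)$ (resp.\ $\wwc 1=\mathrm{const}$ for $B(P^{-1})$). The line $L_{*0}$ is indeed a common slice of both bisectors, but $L_{13}$, whose equation is $\zzc 1+e^{-i\theta}\tfrac{\sin\beta}{\sin(\beta-\theta)}\zzc 2=1$, is not a slice of $B(P)$ and hence is not contained in it. So the complement of $L_{*0}$ in $B(P)\cap B(P^{-1})$ is a $2$-real-dimensional Giraud-type disc, not a complex line, and no algebraic factorisation of the two equations $\im(\zzc 1)=0$, $\im(\wwc 1)=0$ into two complex-linear factors exists. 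Even if it did, your conclusion would only place the point on $L_{13}$, a $2$-dimensional set, whereas the proposition asserts membership in the $1$-dimensional segment $\gamma_{5,13}$; the vertex-counting step (``the only remaining common vertices are $\zzz_5$ and $\zzz_{13}$, so the component must be $\gamma_{5,13}$'') is a heuristic, not an argument that the intersection is one-dimensional or geodesic.

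What is missing is precisely the ingredient the paper uses: the inequalities defining $D$ itself. The paper sets $\zzc 1=x$, $\wwc 1=u$ real, uses Lemma~\ref{lemma:sidecombinatorics} together with the two non-degeneracy hypotheses to get \emph{strict} upper bounds on $x$ and $u$ (this is where those hypotheses actually enter --- they make two denominators strictly negative, not merely to ``exclude a component''), solves for $\zzc 2$ and $\wwc 2$ in terms of $x,u$, and observes that $\im(e^{-i\theta}\zzc 2)$ and $\im(\wwc 2)$ are fractions with the \emph{same numerator} and denominators of known sign. The inequalities $\im(e^{-i\theta}\zzc 2)\le 0$ and $\im(\wwc 2)\ge 0$ valid on $D$ then force that common numerator to be simultaneously $\ge 0$ and $\le 0$, hence zero, so the point lies additionally on $B(\gR 1^{-1})$ and $B(\gR 2)$ and therefore on the fourfold intersection, which is the edge $\gamma_{5,13}$. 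Without invoking the $D$-inequalities your argument cannot see why the two-dimensional Giraud disc meets $D$ only in a curve, so the proposal as written does not prove the proposition.
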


We will prove the first point and the others are proved in the exact same way.
The proof is very similar to the ones in \cite{livne} and \cite{irene}.

\begin{proof}
Let us take $\zzz \in S(P) \cap S(P^{-1})$. 
Then 
\begin{align*}
\zzc 1 &=x, & \wwc 1 &=u
\end{align*}
and by hypothesis and using Lemma \ref{lemma:sidecombinatorics} we have
\begin{align}\label{eq:ineqinproof}
x &\leq \frac{\sin \theta \sin (\alpha - \phi)}{\sin \alpha \sin (\theta+\phi)}, 
& u &\leq -\frac{\sin (\alpha+\beta) \sin (\alpha - \phi)}{\sin(\beta-\theta)\sin (\theta+\phi)}.
\end{align}

Then using \eqref{eq:stcoord} one can express $\zzc 2$ and $\wwc 2$ in terms of $x$ and $u$ as follows:

\begin{multline*} 
(\sin(\theta+\phi)\sin \alpha x -\sin(\alpha-\phi)\sin \theta) 
\wwc 2= \\
-\sin(\beta-\theta) \sin \theta e^{i(\alpha+\beta-\theta-\phi)} u
+\sin(\theta+\phi) \sin(\beta-\theta) e^{i(\alpha+\beta-\theta)} ux\\
+(\sin(\alpha+\beta) \sin \phi e^{i(\beta -\theta)} 
+\sin(\theta+\phi)\sin \alpha )x 
-\sin(\alpha-\phi) \sin \theta 
\end{multline*}

\begin{multline*}
\left(-\sin(\theta+\phi)\sin(\beta-\theta)u-\sin(\alpha-\phi)\sin(\alpha+\beta)\right)
\zzc 2 = \\
\frac
{\sin(\beta-\theta)}{\sin \beta}e^{-i \theta} 
(\sin \theta \sin(\alpha+\beta-\theta-\phi) e^{i \alpha}u
-\sin(\theta+\phi) \sin \alpha e^{i(\alpha+\beta-\theta)}ux \\
+(\sin\alpha \sin(\alpha+\beta) e^{-i \phi}
-\sin(\alpha-\phi) \sin(\alpha+\beta))x
-\sin(\theta+\phi)\sin(\beta-\theta) ).
\end{multline*}

Now, we know by Lemma \ref{lemma:bisectors} that inside $D$ we have 
\begin{multline*}
0\geq \im e^{-i\theta}\zzc 2 = 
\frac{\sin(\beta-\theta) \sin \alpha}{\sin \beta} \cdot\\
\cdot \frac{\sin(\alpha+\beta)\sin \phi x
+\sin(\theta+\phi) \sin(\alpha+\beta-\theta) ux 
-\sin\theta \sin(\alpha+\beta-\theta-\phi) u}
{\sin(\theta+\phi)\sin(\beta-\theta) u +\sin(\alpha-\phi) \sin(\alpha+\beta)},
\end{multline*}
but by \eqref{eq:ineqinproof} we know that the denominator is strictly negative and so the numerator must be positive. 

Again by Lemma \ref{lemma:bisectors}, $\zzz$ satisfies 
\begin{multline*}
0\leq \im \wwc 2 = \\
\sin(\beta-\theta)\cdot \frac{\sin(\alpha+\beta)\sin \phi x
+\sin(\theta+\phi) \sin(\alpha+\beta-\theta) ux 
-\sin\theta \sin(\alpha+\beta-\theta-\phi) u}
{\sin(\theta+\phi)\sin \alpha x -\sin(\alpha-\phi)\sin \theta},
\end{multline*}
and since by \eqref{eq:ineqinproof} the denominator must be strictly negative, then the numerator must be negative. 

But since the two numerators coincide, then they must be both equal 0. 
This means that the point we are considering must be also in $S(\gR 1^{-1})$ and in $S(\gR 2)$, which means that we are on edge $\gamma_{5,13}$. 
\end{proof}

\begin{rk}\label{rk:kcomb}
The proof relies on Lemma \ref{lemma:sidecombinatorics}. 
As we will see in Section \ref{sec:kneg}, there are cases in which \eqref{eq:PP-possible} is not satisfied. 
In term of configurations, this means that one needs to consider a slightly different configuration of triangles (see Section \ref{sec:kneg}). 
Using the new configuration one can prove an equivalent statement using the exact same strategy of proof as in \cite{livne} and \cite{irene}.
\end{rk}

\section{The polyhedron in our case}\label{sec:2f}

We will now consider the case where two of the cone points have same cone angles. 
First we will describe which sets of cone angles give a lattice, then we will show how to use the polyhedron in Section \ref{sec:config} to build a fundamental domain for them.

\subsection{Lattices with 2-fold symmetry} \label{sec:values}

As mentioned in the introduction (Section \ref{sec:intro}), the lattices we are considering were introduced by Deligne and Mostow starting from a ball 5-tuple. 
This is equivalent to consider a cone metric on a sphere of area 1 with prescribed cone singularities of angles $(\theta_0, \dots, \theta_4)$, with $0<\theta_i<2\pi$ and satisfying the discrete Gauss-Bonnet formula.
A sphere with $5$ cone points has a structure of a two dimensional complex hyperbolic space, as proved by Thurston in \cite{thurston} and showed in Section \ref{sec:config}.

Between these, we will consider the lattices with 2-fold symmetry, which means that two of the five cone points will have same con angle. 
We will assume that the 2-fold symmetry is given by $\theta_1=\theta_2$.
Occasionally the lattices will have an extra symmetry and we will also have $\theta_0=\theta_3$.
We will use the parameters in \eqref{eq:defparam} to describe the lattices, except that now $\alpha=\beta$.

By similarity with the 3-fold symmetry case, to each lattice we will associate numbers $p,p',k,k',l,l',d$, which are the orders of some maps in the group and are defined as follows.
\begin{align}\label{eq:p,p',k,k',l,l',d}
\frac{\pi}{p}&=\theta, & \frac{\pi}{k}&=\phi, & \frac{\pi}{l}&=\alpha-\theta-\phi, & \frac{\pi}{d}=\pi-\alpha-\theta \\
\frac{\pi}{p'}&=\alpha-\frac{\pi}{2}, & \frac{\pi}{k'}&=\pi+\theta+\phi-2\alpha, & \frac{\pi}{l'}&=\pi-\alpha-\phi. \nonumber
\end{align}
In particular, we will use $(p,k,p')$ to denote the configuration $(\alpha, \theta,\phi)$ and give the other values in terms of these.
Remark that in the 2-2-fold symmetry case (i.e. when we also have $\theta_0=\theta_3$), we have $\theta=\phi$ and so the lattice is of the form $(p,p,p')$.
Notice also that in the 3-fold symmetry case one would have $k=k'$, $l=l'$ and $p=2p'$. 
In fact $k$ and $k'$ will be the orders of $\gA 1(\alpha, \beta, \theta, \phi)$ for two of the different configurations we will consider (see \eqref{eq:A1matrix}) which coincide in the 3-fold symmetry case.
A similar thing happens for $l$ and $l'$. 
The values $p'$ and $p$ here are the orders of $\gR 1 \ctwo$ and $\gR 1 \circ \gR 1 \cthree$ respectively (remember that the composition is done as in \eqref{eq:composition}), and notice that they are applied to different configurations.
Since in the 3-fold symmetry case the three configurations we consider coincide, $p$ will be the order of the square of $\gR 1$, which has order $p'$ and hence $p=2p'$. 

In the following table we give the values of the cone angles for the lattice $(p,k,p')$.
These are all the lattices with 2-fold symmetry in the original list by Deligne and Mostow and together with the 3-fold symmetry lattices form the whole list of Deligne-Mostow lattices in dimension 2.
\begin{longtable}{|c||c|c|c|c|c|}
\hline
Lattice & $\theta_0$ & $\theta_1$ & $\theta_2$ & $\theta_3$ & $\theta_4$\\
\hline
(6,6,3) & $2\pi/3$ & $5\pi/3$ & $5\pi/3$ & $2\pi/3$ & $4\pi/3$ \\
(10,10,5) & $4\pi/5$ & $7\pi/5$ & $7\pi/5$ & $4\pi/5$ & $8\pi/5$ \\
(12,12,6) & $5\pi/6$ & $4\pi/3$ & $4\pi/3$ & $5\pi/6$ & $5\pi/3$ \\
(18,18,9) & $8\pi/9$ & $11\pi/9$ & $11\pi/9$ & $8\pi/9$ & $16\pi/9$ \\
(4,4,3) & $5\pi/6$ & $5\pi/3$ & $5\pi/3$ & $5\pi/6$ & $\pi$ \\
(4,4,5) & $11\pi/10$ & $7\pi/5$ & $7\pi/5$ & $11\pi/10$ & $\pi$  \\
(4,4,6) & $7\pi/6$ & $4\pi/3$ & $4\pi/3$ & $7\pi/6$ & $\pi$ \\
(3,3,4) & $7\pi/6$ & $3\pi/2$ & $3\pi/2$ & $7\pi/6$ & $2\pi/3$ \\
(3,3,3) & $\pi$ & $5\pi/3$ & $5\pi/3$ & $\pi$ & $2\pi/3$ \\
\hline
(2,6,6) & $\pi$ & $4\pi/3$ & $4\pi/3$ & $5\pi/3$ & $2\pi/3$ \\
(2,4,3) & $5\pi/6$ & $5\pi/3$ & $5\pi/3$ & $4\pi/3$ & $\pi/2$ \\
(2,3,3) & $\pi$ & $5\pi/3$ & $5\pi/3$ & $4\pi/3$ & $\pi/3$ \\
(3,4,4) & $\pi$ & $3\pi/2$ & $3\pi/2$ & $7\pi/6$ & $5\pi/6$ \\
\hline
\caption{The lattices we are considering.}
\label{table:values}
\end{longtable}

The following table records the values of the parameters in \eqref{eq:p,p',k,k',l,l',d} for each of the lattices we are considering.  

\begin{longtable}{|c|c|c|c|c|c|c|c|}
\hline
Lattice & $p$ & $k$ & $p'$ & $k'$ & $l$ & $l'$ & $d$ \\
\hline
(6,6,3) & 6 & 6 & 3 & -3 & 2 & $\infty$ & $\infty$ \\
(10,10,5) & 10 & 10 & 5 & -5 & 2 & 5 & 5 \\
(12,12,6) & 12 & 12 & 6 & -6 & 2 & 4 & 4 \\
(18,18,9) & 18 & 18 & 9 & -9 & 2 & 3 & 3 \\
(4,4,3) & 4 & 4 & 3 & -6 & 3 & -12 & -12 \\
(4,4,5) & 4 & 4 & 5 & 10 & 5 & 20 & 20 \\
(4,4,6) & 4 & 4 & 6 & 6 & 6 & 12 & 12 \\
(3,3,4) & 3 & 3 & 4 & 6 & 12 & -12 & -12 \\
(3,3,3) & 3 & 3 & 3 & $\infty$ & 6 & -6 & -6 \\
\hline
(2,6,6) & 2 & 6 & 6 & 3 &$\infty$ & 6 & -6 \\
(2,4,3) & 2 & 4 & 3 & 12 & 12 & -12 & -3 \\
(2,3,3) & 2 & 3 & 3 & 6 & $\infty$ & -6 & -3 \\
(3,4,4) & 3 & 4 & 4 & 12 & 6 & $\infty$ & -12 \\
\hline
\caption{The values of the parameters for our lattices.}
\label{table:valuespar}
\end{longtable}
%

\subsection{The fundamental polyhedron}\label{sec:2fold}

\subsubsection{Definition}

In this section we will see how one can use the general polyhedron described in Section \ref{sec:genpoly} to build a fundamental domain for Deligne-Mostow lattices with 2-fold symmetry. 
From now on we will consider a sphere with cone singularities $(\theta_0, \theta_1, \theta_2,\theta_3, \theta_4)$ in the list in Section \ref{sec:values}.
This means that we have two equal angles at the vertices $v_1$ and $v_2$. 
In the configurations as described in Section \ref{sec:config}, this means that $\alpha=\beta$.
Since the case that we treated before is when the three angles at $v_1$, $v_2$ and $v_3$ were equal, by analogy we also want to consider the configurations where the two equal angles are at $v_2$ and $v_3$ or at $v_1$ and $v_3$.
We will call these configurations of type \conf 1, \conf 2 and \conf 3 respectively.
Remark that configuration of type \conf i corresponds to having the cone angles satisfying $\theta_i=\theta_{i+1}$, for indices $i=1,2,3$ taken mod 3. 

We will build a polyhedron for each of these cases and use their union to build a fundamental domain for the lattices. 
On the parameters $(\alpha, \beta, \theta,\phi)$, type \conf 1 corresponds to $(\alpha, \alpha, \theta, \phi)$, type \conf 2 corresponds to $(\pi+\theta-\alpha, \alpha, 2\alpha-\pi, \pi+\theta+\phi-2\alpha)$ and type \conf 3 corresponds to $(\alpha, \pi+\theta-\alpha, \theta, \phi)$. 
For each type, we will consider the $\zzz$-coordinates and $\www$-coordinates.
We will have $\textbf{x}$-, $\textbf{y}$- and $\textbf{z}$-coordinates as $\zzz$-coordinates of the configuration of type \conf 1, \conf 2 and \conf 3 respectively. 
We will also have $\textbf{u}$-, $\textbf{v}$- and $\textbf{w}$-coordinates, representing copies of type \conf 1, \conf 2 and \conf 3 respectively and being the $\www$-coordinates of one of the previous ones.
More precisely, the relation between $\textbf{x}$-, $\textbf{y}$-, $\textbf{z}$- and $\textbf{u}$-, $\textbf{v}$-, $\textbf{w}$-coordinates is as follows. 
Since $P^{-1}$ acts on the copies as explained in Figure \ref{fig:P-action}, then, for example, a configuration of type \conf 1 will be sent to one of type \conf 2. 
This means that the coordinates defined as $P^{-1}(\textbf{x})$ will be the $\textbf{v}$-coordinates. 
With a similar argument, one gets
\begin{align} \label{eq:coorduvw}
\textbf u &=P^{-1}(\textbf{z}) &
\textbf v &=P^{-1}(\textbf{x}) &
\textbf w &=P^{-1}(\textbf{y}) &
\end{align}
In other words, the $\textbf{u}$-, $\textbf{v}$- and $\textbf{w}$-coordinates will be the coordinates for the configuration of type \conf 1, \conf 2 and \conf 3 respectively, obtained after applying $P$ to the standard configuration of type \conf 3, \conf 1 and \conf 2 respectively. 

We will start from the configuration of type \conf 3, with its $\textbf z$-coordinates as the $\zzz$-coordinates of configuration $(\alpha, \pi+\theta -\alpha,\theta,\phi)$.
The $\textbf x$- and $\textbf y$-coordinates will be determined by the action of the moves $\gR 1$ and $\gR 2^{-1}$ respectively. 
See Figure \ref{fig:stdconf} for more details.
As mentioned, each configuration will give us a polyhedron of the same type as $D$ in \eqref{eq:polygeneric}.

We will first explain what is the relation between the $\textbf{x}$-, $\textbf{y}$- and $\textbf{z}$-coordinates. 
Since copies of type \conf 1 and \conf 3 are swapped by $\gR 1$, it is natural to define 
\begin{equation}\label{eq:relationcoordxz}
\textbf x= \gR 1 (\alpha, \alpha, \theta,\phi) \textbf z.
\end{equation}

Since the $\textbf w$- and $\textbf u$-coordinates are also of type \conf 3 and \conf 1 respectively, one would also want 
\begin{equation}\label{eq:relationcoorduw}
\textbf u= \gR 1 \cthree \textbf w.
\end{equation}
Using the definition of $\textbf u$- and $\textbf w$-coordinates, together with the previous formula, the $\textbf{y}$-coordinates are defined as 
\begin{equation} \label{eq:relationcoordyz}
\textbf z= \gR 2 (\pi+\theta-\alpha, \alpha, 2\alpha-\pi, \pi+\theta+\phi-2\alpha) \textbf y.
\end{equation}
Using Equations \eqref{eq:coorduvw}, \eqref{eq:relationcoordxz} and \eqref{eq:relationcoordyz}, one can also see that 
\begin{equation}\label{eq:relationcoordyv}
\textbf{v}=P^{-1} \textbf{x}=P^{-1}\gR 1 \textbf{z}=P^{-1}\gR 1\gR 2 \textbf{y}= \textbf{y.}
\end{equation}

The following digram summarises the relations on the coordinates. 
\[
\begin{tikzcd}[column sep=huge, row sep=huge]
\textbf y \arrow[drr,"=", near start,swap] \arrow{r}{\gR 2} 
& \textbf z \arrow{r}{\gR 1} 
&\textbf x \\
\textbf w \arrow{u}{P} \arrow{r}[swap]{\gR 1}  
& \textbf u \arrow[u, crossing over, "P", near start] \arrow[r,"P^{-1} \gR 1 P", swap]
& \textbf v \arrow{u}{P} 
\end{tikzcd}
\]

\begin{figure}
\centering
\includegraphics[width=1\textwidth]{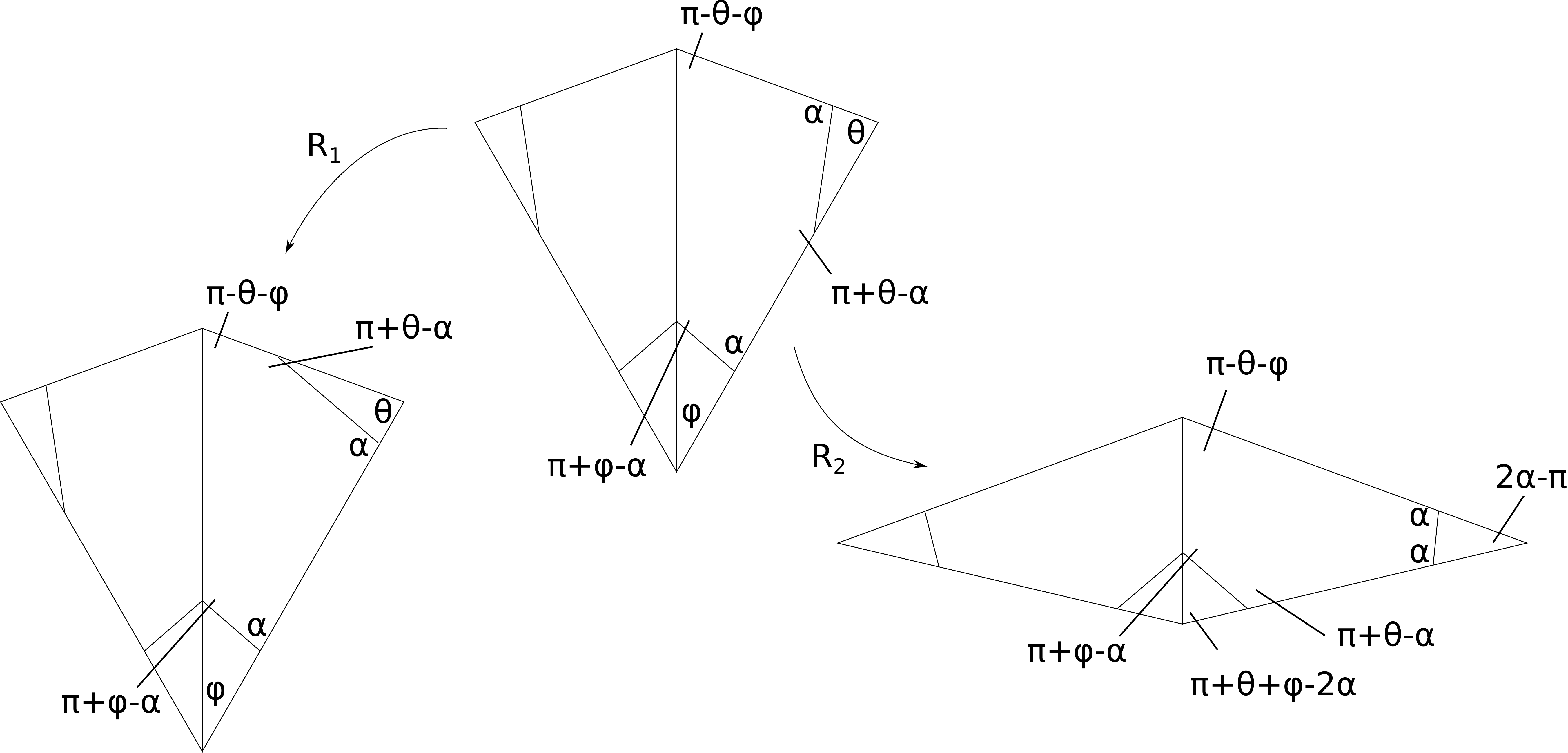}
\begin{quote}\caption{The representative for each configuration type. \label{fig:stdconf}} \end{quote}
\end{figure}

For each coordinate type, we can define a polyhedron as in \eqref{eq:polygeneric}. 
This will give us three components of our fundamental polyhedron $D$ and we will write
\begin{gather} \label{eq:Djohn}
D= D_1 \cup D_2 \cup D_3,
\textit{ with }
\begin{cases}
D_1=D(\alpha, \alpha, \theta,\phi)=\gR 1 ^{-1} (D_3), \\
D_2=D(\pi+\theta-\alpha, \alpha, 2\alpha-\pi, \pi+\theta+\phi -2\alpha)=\gR 2 (D_3), \\
D_3=D(\alpha, \pi+\theta -\alpha,\theta,\phi). 
\end{cases}
\end{gather}

On the coordinates, the polyhedron $D_1$ is defined as 
\begin{equation*} 
D_1= \left\{ \textbf x=P(\textbf v) \colon
\begin{array}{l l}
\arg(x_ 1) \in (-\phi,0), & \arg(x_ 2) \in (0,\theta),\\
\arg(v_ 1) \in (0,\pi+\theta+\phi-2\alpha), & \arg(v_ 2) \in (0,2\alpha-\pi)
\end{array} \right\},
\end{equation*}
the polyhedron $D_2$ is
\begin{equation*} 
D_2= \left\{ \textbf y=P(\textbf w) \colon
\begin{array}{l l}
\arg(y_ 1) \in (-(\pi+\theta+\phi-2\alpha,0), & \arg(y_ 2) \in (0,2\alpha-\pi),\\
\arg(w_ 1) \in (0,\phi), & \arg(w_ 2) \in (0,\theta)
\end{array} \right\}
\end{equation*}
and the polyhedron $D_3$ is defined as 
\begin{equation*} 
D_3= \left\{ \textbf z=P(\textbf u) \colon
\begin{array}{l l}
\arg(z_ 1) \in (-\phi,0), & \arg(z_ 2) \in (0,\theta),\\
\arg(u_ 1) \in (0,\phi), & \arg(u_ 2) \in (0,\theta)
\end{array} \right\}.
\end{equation*}

Due to the fact that the matrix for $\gR 1$ is extremely simple, we will keep track only of three sets of coordinates, namely $\textbf{z}$-, $\textbf{w}$- and $\textbf{y}$-coordinates and use the relations in \eqref{eq:relationcoordxz}, \eqref{eq:relationcoorduw} and \eqref{eq:relationcoordyv} to give the other coordinates in term of these.

Then we can describe the polyhedron as follows.
\begin{equation*} 
D= \left\{ \textbf z=\gR 2(\textbf y)=\gR 2 P(\textbf w) \colon
\begin{array}{l l}
\arg(z_ 1) \in (-\phi,0), & \arg(z_ 2) \in (-\theta,\theta),\\
\arg(w_ 1) \in (0,\phi), & \arg(w_ 2) \in (-\theta,\theta), \\
\arg(y_ 1) \in (-\phi',\phi'), & \arg(y_ 2) \in (0,\theta')
\end{array} \right\},
\end{equation*}
with $\phi'=\pi+\theta+\phi-2\alpha$ and $\theta'=2\alpha-\pi$.

\begin{figure}
\centering
\includegraphics[width=1\textwidth]{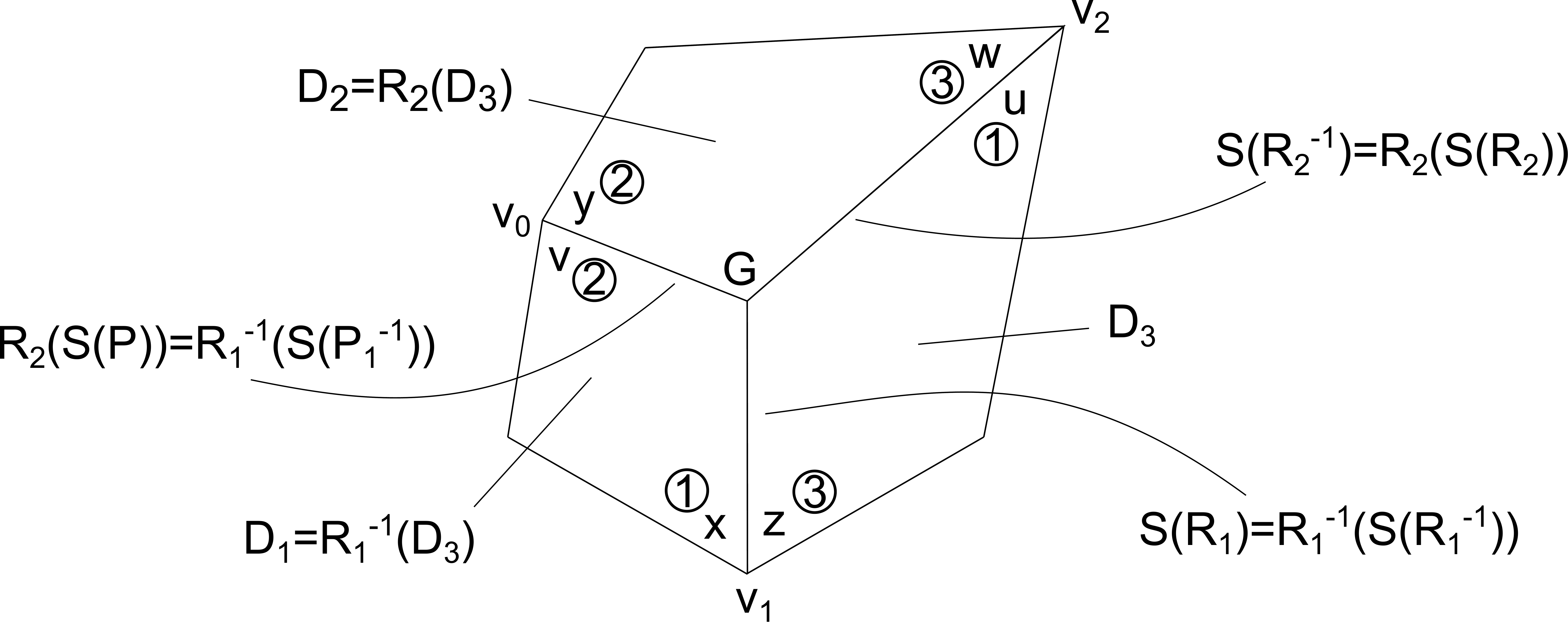}
\begin{quote}\caption{The interaction of the polyhedra and their coordinates. \label{fig:coord}} \end{quote}
\end{figure}

In Figure \ref{fig:coord} one can see how the polyhedra and the coordinates interact. 
The three polyhedra intersect pairwise in a side and all three have a common Giraud disc $G$. 
Passing from $\zzz$- to $\www$-coordinates changes the type of configuration from \conf j to \conf i within the same polyhedron $D_j$, where $i=j-2$, taken mod 3. 
The three special vertices $\textbf v_0$, $\textbf v_1$ and $\textbf v_2$ are the origin of one of the coordinates.

\subsubsection{Vertices of D}

The vertices of $D$ will be of three types. 
Some will come from $D_1$ and they will be called $\textbf x_i$, for $i=1, \dots, 14$, some will be the vertices of $D_2$ and we will denote them $\textbf y_i$, for $i=1, \dots, 14$ and finally there will be the vertices $\textbf z_i$'s for $i=1, \dots, 14$, coming from $D_3$.
Since the three polyhedra intersect there will be some vertices that are repeated. 
The following table describes all the vertices.
In the first column there will be the label we choose for the vertex, in the second, third and fourth column its name in $D_3$, $D_1$ and $D_2$ respectively (if there is one), and in the final columns we will record which coordinates have a "nice" form.

\begin{center}
\begin{longtable}{|c|c|c|c|c|c|c|c|c|c|}
\hline
$D$ & $D_3$ & $D_1$ & $D_2$ & $\arg z_1$ & $\arg z_2$ & $\arg w_1$ & $\arg w_2$ &$\arg y_1$ & $\arg y_2$ \\
\hline 
$\textbf v_0$ & & $\textbf x_2$ & $\textbf y_1$ 
& & & & & $y_1=0$& $y_2=0$ \\
$\textbf v_1$ & $\textbf z_1$ & $\textbf x_1$ &  
& $z_1=0$& $z_2=0$ & &&& \\
$\textbf v_2$ & $\textbf z_2$ &  & $\textbf y_2$ 
&& & $w_1=0$& $w_2=0$ && \\
$\textbf v_3$ & $\textbf z_3$ & $\textbf x_3$ & $\textbf y_5$
& 0& $z_2=0$ & 0& 0 & 0& $\theta'$ \\
$\textbf v_4$ & $\textbf z_4$ & $\textbf x_5$ & $\textbf y_4$
& 0& 0 & 0& $w_2=0$ & 0& 0 \\
$\textbf v_5$ & $\textbf z_5$ &  &  
& 0& $\theta$ & 0& $-\theta$ & &\\
$\textbf v_6$ & $\textbf z_6$ & $\textbf x_6$ & $\textbf y_{13}$ 
& $-\phi$& $z_2=0$ & 0& 0 & 0& $\theta'$ \\
$\textbf v_7$ & $\textbf z_7$ & $\textbf x_8$ & $\textbf y_{12}$ 
& $-\phi$&0 & $ \phi$& 0 & $y_1 =0$& $\theta'$ \\
$\textbf v_8$ & $\textbf z_8$ &  & $\textbf y_{14}$
& $-\phi$& $\theta$ & $w_1 =0$& 0 & $-\phi'$& $\theta'$ \\
$\textbf v_9$ & $\textbf z_9$ & $\textbf x_{12}$ &
& $z_1=0$& 0 & $\phi$& $-\theta$ & $\phi'$& 0\\
$\textbf v_{10}$ & $\textbf z_{10}$ & $\textbf x_{13}$ & $\textbf y_{10}$ 
&0& 0 & $\phi$& $w_2=0$ & 0& 0 \\
$\textbf v_{11}$ & $\textbf z_{11}$ & $\textbf x_{14}$ & $\textbf y_9$ 
& $-\phi$& 0 & $\phi$& 0& $y_1 =0$&0 \\
$\textbf v_{12}$ & $\textbf z_{12}$ &  & 
&$z_1=0$ & $\theta$ & $\phi$ & $-\theta$ && \\
$\textbf v_{13}$ & $\textbf z_{13}$ &  &  
& 0 & $\theta$ & 0 & $-\theta$ && \\
$\textbf v_{14}$ & $\textbf z_{14}$ &  & 
& $-\phi$ & $\theta$ & 0& $-\theta$ && \\
$\textbf v_{16}$ &  & $\textbf x_4$ & $\textbf y_3$ 
& 0 & $-\theta$ & 0 & $\theta$ &0 &$y_2=0$ \\
$\textbf v_{17}$ &  & $\textbf x_7$ &
& $-\phi$ & $-\theta$ && & $\phi'$ &$\theta'$ \\
$\textbf v_{18}$ &  & $\textbf x_9$ &  
& $z_1=0$ & $-\theta$ & & &$\phi'$ & 0 \\
$\textbf v_{19}$ &  & $\textbf x_{10}$ & 
& 0 & $-\theta$ & & &$\phi'$ &$y_2=0$ \\
$\textbf v_{20}$ &  & $\textbf x_{11}$ &
& $-\phi$& $-\theta$ & & &$\phi'$& $\theta'$ \\
$\textbf v_{21}$ &  &  & $\textbf y_6$
&& &0& $\theta$ & $-\phi'$ & $y_2=0$ \\
$\textbf v_{22}$ &  &  & $\textbf y_7$ 
&& &$\phi$ & $\theta$  & $-\phi'$ &0 \\
$\textbf v_{23}$ &  &  & $\textbf y_8$
&& &$w_1=0$ & $\theta$  & $-\phi'$ & $\theta'$ \\
$\textbf v_{24}$ &  &  & $\textbf y_{11}$ 
&& & $\phi$&$\theta$  & $-\phi'$ & 0 \\
\hline
\caption{The vertices of $D$}
\label{table:vertD}
\end{longtable}
\end{center}

This reflects how the $D_i$'s glue together. 
In particular, the polyhedra $D_1$ and $D_3$ glue along
\begin{equation}\label{eq:glueing13}
\{\im z_2=0 \}\cap D_3=\{\im e^{-i\theta} x_2=0 \} \cap D_1,
\end{equation}
while $D_2$ and $D_3$ are glued along 
\begin{equation}\label{eq:glueing23}
\{\im e^{-i\theta}u_2=0 \}\cap D_3=\{\im w_2=0 \} \cap D_2
\end{equation}
and $D_1$ and $D_2$ intersect along 
\begin{equation}\label{eq:glueing12}
\{\im v_1=0 \}\cap D_1=\{\im y_1=0 \} \cap D_2.
\end{equation}
Moreover, all three will intersect in the Giraud disc $G$ containing the ridge bounded by vertices $\textbf{v}_3, \textbf{v}_4, \textbf{v}_6, \textbf{v}_7, \textbf{v}_{10}$ and $\textbf{v}_{11}$ (see Figure \ref{fig:coord}). 

\begin{rk}\label{rk:samelines}
Using Table \ref{table:lineseq} one can obtain the equations of the complex lines for our three configurations and see that the following lines coincide:
\begin{enumerate}
\item \label{item:*0}$L_{*0}(\alpha, \pi+\theta-\alpha, \theta, \phi)=
L_{*0}(\pi+\theta-\alpha, \alpha, 2\alpha-\pi, \pi+\theta+\phi-2\alpha)=
L_{*0}(\alpha, \alpha, \theta, \phi)$,
\item \label{item:*3}$L_{*3}(\alpha, \pi+\theta-\alpha, \theta, \phi)=
L_{*3}(\pi+\theta-\alpha, \alpha, 2\alpha-\pi, \pi+\theta+\phi-2\alpha)=
L_{*2}(\alpha, \alpha, \theta, \phi)$,
\item \label{item:*1}$L_{*1}(\alpha, \pi+\theta-\alpha, \theta, \phi)=
L_{*2}(\pi+\theta-\alpha, \alpha, 2\alpha-\pi, \pi+\theta+\phi-2\alpha)=
L_{*1}(\alpha, \alpha, \theta, \phi)$.
\end{enumerate}
\end{rk}

\subsubsection{Sides and side pairing maps}

In view of applying Poincaré polyhedron theorem in Section \ref{sec:mainthm}, we need to analyse the sides of $D$ and explain how we have some maps pairing them. 

Clearly, the sides of $D$ will be the union of all sides in $D_i$, with $i=1,2,3$, except for the three sides along which two of the copies glue. 
Some of the sides combine to create a single larger side. 
Remembering \eqref{eq:Djohn}, the sides (illustrated in Figure \ref{fig:sides} with their side pairings) will be as follows.
\[
\begin{array}{llll}
S(J), & S(P), & S(R_1), & S(R_2), \\
S(J^{-1}), & S(P^{-1}), & S(R_1^{-1}), & S(R_2^{-1}), \\
R_1^{-1}S(J), & R_1^{-1}S(P), & R_1^{-1}S(R_1), & R_1^{-1}S(R_2), \\
R_1^{-1}S(J^{-1}), & R_1^{-1}S(P^{-1}), & R_1^{-1}S(R_1^{-1}), & R_1^{-1}S(R_2^{-1}), \\
R_2S(J), & R_2S(P), & R_2S(R_1), & R_2S(R_2), \\
R_2S(J^{-1}), & R_2S(P^{-1}), & R_2S(R_1^{-1}), & R_2S(R_2^{-1}).
\end{array}
\]

Now the glueing of the three polyhedra (see Equations \eqref{eq:glueing13}, \eqref{eq:glueing23} and \eqref{eq:glueing12}) tells us that 
\[
R_1^{-1}S(R_1^{-1})=S(R_1),\quad
R_2S(R_2)=S(R_2^{-1}), \quad
R_1^{-1}S(P^{-1})=R_2S(P),
\]
so these sides are now internal (see Figure \ref{fig:coord}).

\begin{figure}
\centering
\includegraphics[height=0.9\textheight]{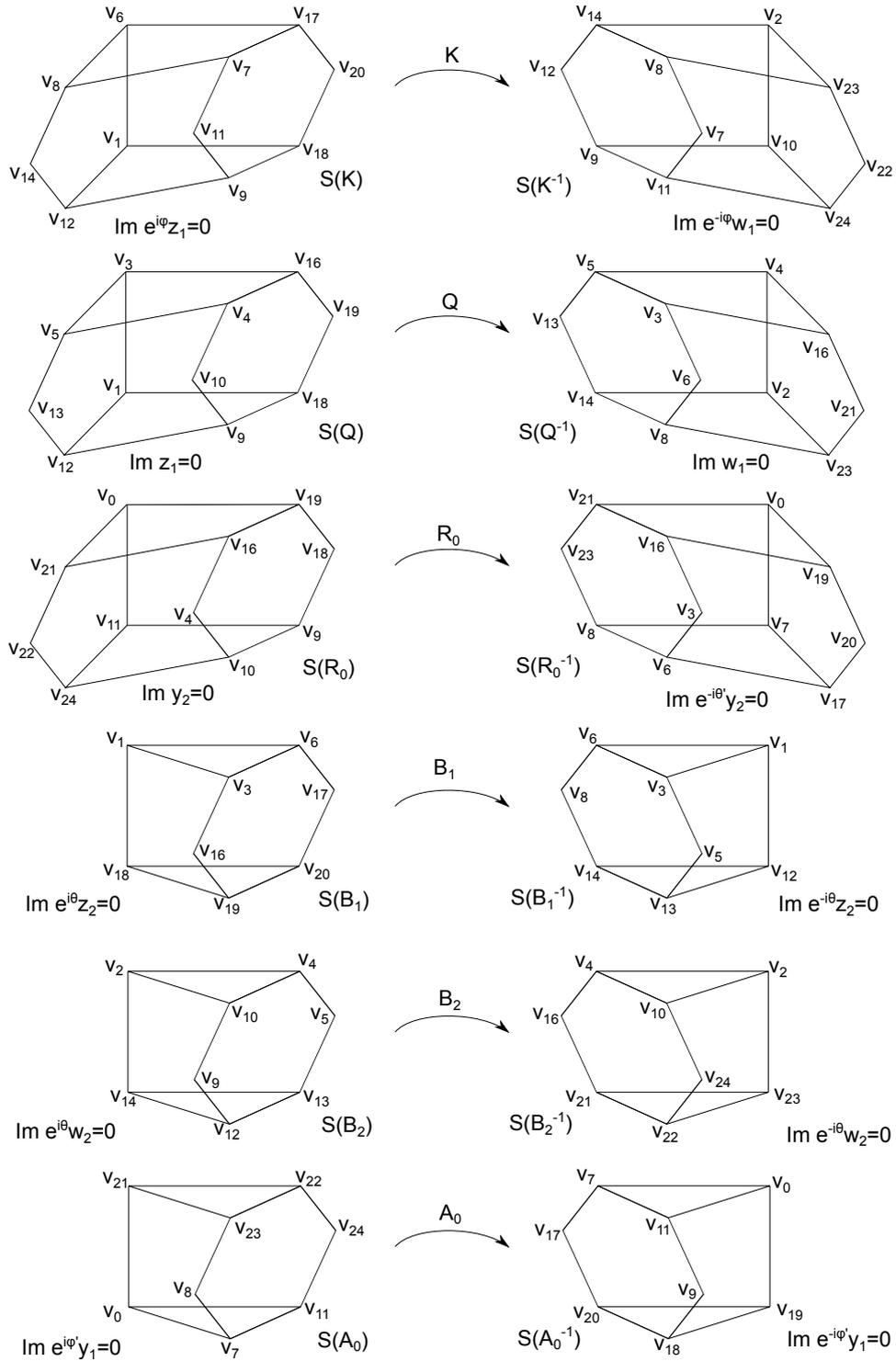}
\begin{quote}\caption{The sides of $D$. \label{fig:sides}} \end{quote}
\end{figure}

The side pairings will be obtained adapting to the union of the three polyhedra the equivalent on each $D_i$ of the side pairings in previous works (see Section 4.3 of \cite{livne}, Section 5.3 of \cite{boadiparker} and Section 8.3.1 of \cite{irene}). 
In other words, in each copy we need to consider $R_1$, $R_2$, $P$ and $J$ and adapt them to act on the sides of $D$.
We will describe all of them treating the $\textbf z$-coordinates as the main coordinates. 
In other words, we will give the matrix as applied to the $\textbf z$-coordinates of the point.

First consider $R_1$ and $R_2$. 
Since applying $R_2\cone$ to a point in its $\textbf x$-coordinates is equivalent to applying $R_1\ctwo$ to its $\textbf v=\textbf y$-coordinates, they combine to a single side pairing 
\[
\gR 1(\pi+\theta-\alpha, \alpha, 2\alpha-\pi, \pi+\theta+\phi-2\alpha)
=\begin{bmatrix}
1 & 0 & 0 \\
0 & -e^{2i\alpha} & 0 \\
0 & 0 & 1
\end{bmatrix}.
\]
This is the side pairing as applied on the $\textbf y$-coordinates. 
We will hence consider 
\begin{align*}
\gR 0&= \gR 2 \gR 1 \gR 2^{-1} =\gR 2(\pi+\theta-\alpha, \alpha, 2\alpha-\pi, \pi+\theta+\phi-2\alpha) \circ \\
&\circ \gR 1(\pi+\theta-\alpha, \alpha, 2\alpha-\pi, \pi+\theta+\phi-2\alpha) \circ \gR 2^{-1}(\alpha, \pi+\theta-\alpha, \theta,\phi),
\end{align*}
which includes the change of coordinates. 

Now consider $R_1 \cone$ and $R_1 \cthree$. 
The target side of the former coincides with the source side of the latter and is the (now internal) side $D_1 \cap D_3$.
This means that we can compose the two maps and have a new side pairing 
\[
\gRR 1=\gR 1(\alpha, \pi+\theta-\alpha, \theta,\phi) \gR 1(\alpha, \alpha, \theta, \phi)
=\begin{bmatrix}
1 & 0 & 0 \\
0 & e^{2i\theta} & 0 \\
0 & 0 & 1
\end{bmatrix}.
\]
Remark that even though it looks like this is the matrix we use when applying the map to a point in its $\textbf x$-coordinates, composing it with the change of coordinates from our coordinates (the $\textbf z$-coordinates) one gets that in terms of matrices
\[
\gRR 1 (\alpha, \pi+\theta-\alpha, \theta,\phi)=\gR 1^{-1}(\alpha, \alpha, \theta, \phi)\gRR 1(\alpha, \alpha, \theta, \phi)\gR 1(\alpha, \pi+\theta-\alpha, \theta,\phi)=\gRR 1 (\alpha, \alpha, \theta, \phi).
\] 

Similarly, the common side of $D_2$ and $D_3$ is the target side of $\gR 2 \cthree$ and the starting side of $\gR 2 \ctwo$. 
We can then define 
\[
\gRR 2=\gR 2(\pi+\theta-\alpha, \alpha, 2\alpha-\pi, \pi+\theta+\phi-2\alpha)\gR 2(\alpha, \pi+\theta-\alpha, \theta,\phi).
\]
The map $\gRR 2$ is already defined to act on the $\textbf z$-coordinates. 
As we said for $\gR 2$ and $\gR 1$, $\gRR 2$ acts as $\gRR 1$, but on the $\textbf u$-coordinates. 

The side pairings $P$ and $J$ related the $\zzz$- and $\www$-coordinates of the polyhedron, but the side pairing property relied on the fact that the source and target configurations were of the same type. 
Adapting this to our case means that we want to consider the maps relating $\textbf z$- and $\textbf w$-coordinates, $\textbf x$- and $\textbf u$-coordinates and $\textbf y$- and $\textbf v$-coordinates. 
The map relating $\textbf y$- and $\textbf v$-coordinates is the identity and it indeed maps the common side between $D_1$ and $D_2$ to itself.
Since this side is in the interior of $D$, we can ignore it. 
Composing the map obtained with $A_1 \ctwo$ to compute the equivalent of $J$ and applying the change of coordinates to our main coordinates, we obtain the side pairing 
\[
\gA 0= \gR 2\gA 1 \gR 2^{-1}.
\]

Now, we have
\[
\textbf w= P^{-1} \textbf y= P^{-1} \gR 2^{-1}\textbf z =Q^{-1} \textbf z
\]
and 
\[
\textbf u=\gR 2^{-1} \gR 1 ^{-1} \gR 1^{-1} x,
\]
which translates to the $\textbf z$-coordinates as $Q^{-1}$ again.
Then $Q=\gR 1 \gR 2 \gR 1$ will be our new side pairing. 
Moreover, we will consider $K=QA_1$.

Putting all this information together and remarking that $J^3=\id$, one gets that the side pairings are
\begin{eqnarray*}
K=JR_1=R_2J&:& R_1^{-1}S(J)\cup S(J)\longmapsto S(J^{-1})\cup R_2S(J^{-1}), \\
Q=P R_1=R_2 P&:& R_1^{-1}S(P)\cup S(P)
\longmapsto S(P^{-1})\cup R_2S(P), \\
\gR 0=R_1^{-1}R_2R_1=R_2R_1R_2^{-1}&:& R_1^{-1}S(R_2)\cup R_2S(R_1)
\longmapsto R_1^{-1}S(R_2^{-1})\cup R_2S(R_1^{-1}), \\
\gRR 1=\gR 1 \gR 1 &:& R_1^{-1}S(R_1) \longmapsto S(R_1^{-1}), \\
\gRR 2=\gR 2 \gR 2 &:& S(R_2) \longmapsto R_2S(R_2^{-1}), \\
\gA 0=R_1^{-1}J^{-1}J^{-1}R_2^{-1}&:& R_2S(J) \longmapsto R_1^{-1}S(J^{-1}).
\end{eqnarray*}

As mentioned for the general case, the sides are contained in bisectors. 
One can rewrite Lemma \ref{lemma:bisectors} for each copy and eliminate the inequalities related to the sides along which the polyhedra glue. 
Translating the inequalities on the right hand side into $\textbf{z}$-coordinates and giving all the $\textbf{n}_{*i}$ in terms of the configuration $(\alpha, \pi+\theta-\alpha, \theta, \phi)$ (using Remark \ref{rk:samelines}), we get the following lemma. 

\begin{lemma}\label{lemma:bisD}
We have
\begin{itemize}
\item $\im(z_1)\leq0$ if and only if \emph{
$\lvert \langle \textbf z, \textbf n_{*1} \rangle \rvert 
\leq \lvert \langle \textbf z, P^{-1}(\textbf n_{*3}) \rangle \rvert$},
\item $\im(e^{i \phi} z_ 1)\geq0$ if and only if \emph{
$\lvert \langle \textbf z, \textbf n_{*0} \rangle \rvert 
\leq \lvert \langle \textbf z, K^{-1}(\textbf n_{*0}) \rangle \rvert$},
\item $\im(e^{-i \theta} z_ 2)\leq0$ if and only if \emph{
$\lvert \langle \textbf z, \textbf n_{*3} \rangle \rvert 
\leq \lvert \langle \textbf z, \gRR 1(\textbf n_{*3}) \rangle \rvert$},
\item $\im(e^{i\theta}z_ 2)\geq0$ if and only if \emph{
$\lvert \langle \textbf z, \textbf n_{*3} \rangle \rvert 
\leq \lvert \langle \textbf z, \gRR 1^{-1}(\textbf n_{*3}) \rangle \rvert$},
\item $\im(e^{i \phi'} y_ 1)\geq0$ if and only if \emph{
$\lvert \langle \textbf z, \textbf n_{*0} \rangle \rvert 
\leq \lvert \langle \textbf z, K^{2}(\textbf n_{*0}) \rangle \rvert$},
\item $\im(y_ 2)\geq0$ if and only if \emph{
$\lvert \langle \textbf z, \textbf n_{*1} \rangle \rvert 
\leq \lvert \langle \textbf z, Q^{-1}\gRR 1(\textbf n_{*3}) \rangle \rvert$},
\item $\im(e^{-i \theta'} y_ 2)\leq0$ if and only if \emph{
$\lvert \langle \textbf z, \textbf n_{*3} \rangle \rvert 
\leq \lvert \langle \textbf z, \gRR 1^{-1}Q(\textbf n_{*1}) \rangle \rvert$},
\item $\im(e^{-i \phi'} y_ 1)\leq0$ if and only if \emph{
$\lvert \langle \textbf z, \textbf n_{*0} \rangle \rvert 
\leq \lvert \langle \textbf z, K^{-2}(\textbf n_{*0}) \rangle \rvert$},
\item $\im(w_ 1)\geq0$ if and only if \emph{
$\lvert \langle \textbf z, \textbf n_{*3} \rangle \rvert 
\leq \lvert \langle \textbf z, Q(\textbf n_{*1}) \rangle \rvert$},
\item $\im(e^{-i \phi} w_ 1)\leq0$ if and only if \emph{
$\lvert \langle \textbf z, \textbf n_{*0} \rangle \rvert 
\leq \lvert \langle \textbf z, K(\textbf n_{*0}) \rangle \rvert$},
\item $\im(e^{-i \theta} w_ 2)\leq0$ if and only if \emph{
$\lvert \langle \textbf z, \textbf n_{*1} \rangle \rvert 
\leq \lvert \langle \textbf z, \gRR 2(\textbf n_{*1}) \rangle \rvert$},
\item $\im(e^{i\theta}w_ 2)\geq0$ if and only if \emph{
$\lvert \langle \textbf z, \textbf n_{*1} \rangle \rvert 
\leq \lvert \langle \textbf z, \gRR 2^{-1}(\textbf n_{*1}) \rangle \rvert$}.
\end{itemize}
\end{lemma}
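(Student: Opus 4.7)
The plan is to deduce this lemma from Lemma \ref{lemma:bisectors} applied separately to each of the three sub-polyhedra $D_1$, $D_2$, $D_3$ in their respective configurations, then transport every surviving bisector inequality into the common $\textbf{z}$-coordinates, and finally identify the operators appearing on the right-hand side with the side pairings of the enlarged polyhedron $D$.

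I would start with the four $\textbf{z}$-inequalities: these come directly from Lemma \ref{lemma:bisectors} applied to $D_3$ in its $\zzz$-coordinates, which are precisely $\textbf{z}$, since $\textbf{n}_{*i}$ in the statement is taken in the target configuration $(\alpha, \pi+\theta-\alpha, \theta, \phi)$. Of the eight inequalities Lemma \ref{lemma:bisectors} produces on $D_3$, only four correspond to external sides of $D$ (those paired by $P$, $J$, $\gR 1^{-1}$, $\gR 2$); the other four correspond to the glueing sides described in \eqref{eq:glueing13} and \eqref{eq:glueing23}, hence become internal and are dropped. For the four $\textbf{w}$-inequalities I apply Lemma \ref{lemma:bisectors} to $D_2$ in its $\www$-coordinates, which are our $\textbf{w}$ by \eqref{eq:coorduvw} combined with $\textbf{z}=\gR 2\textbf{y}$. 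For the four $\textbf{y}$-inequalities I apply Lemma \ref{lemma:bisectors} to $D_2$ in its $\zzz$-coordinates, which are $\textbf{y}$. In both latter cases, I convert the right-hand sides into the $\textbf{z}$-frame by using the fact that $\gR 2$ is a complex isometry, so that $|\langle \textbf{y}, \textbf{n}\rangle| = |\langle \textbf{z}, \gR 2(\textbf{n})\rangle|$, and analogously when conjugating by $P$ for the $\textbf{w}$-case.

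The final step is recognising the operator appearing on each right-hand side as $K^{\pm 1}$, $K^{\pm 2}$, $Q^{\pm 1}$, $\gRR 1^{\pm 1}$, $\gRR 2^{\pm 1}$, or the combinations $Q^{-1}\gRR 1$ and $\gRR 1^{-1}Q$. This is achieved via the compositional identities from Section \ref{sec:2fold} ($K=JR_1=R_2J$, $Q=PR_1=R_2P$, $Q=\gR 2 P$, $\gRR 1=R_1R_1$, $\gRR 2=R_2R_2$) together with Remark \ref{rk:samelines}, which tells us that $\textbf{n}_{*3}$ coincides across types 2 and 3, $\textbf{n}_{*1}(\text{type 3})=\textbf{n}_{*2}(\text{type 2})$, and $\textbf{n}_{*0}$ is common to all three configurations. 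The higher powers $K^{\pm 2}$ appear because the same underlying $J$-bisector is reached from $D_1$ or $D_2$ only after a further conjugation by $\gR 1^{-1}$ or $\gR 2$, each of which contributes an additional factor of $K$ once rewritten via the compositional identities. The main obstacle is exactly this bookkeeping: carefully tracking which configuration each $\textbf{n}_{*i}$ lives in, which direction of inequality arises from which bisector, and which composition of $\gR 1$, $\gR 2$, $P$, $J$ realises each side pairing of $D$. Once the translation dictionary has been set up, the equivalences themselves are immediate consequences of Lemma \ref{lemma:bisectors} with no further computation, exactly in the spirit of Lemma 4.6 of \cite{livne}, Lemma 4.2 of \cite{boadiparker} and Lemma 7.2 of \cite{irene}.
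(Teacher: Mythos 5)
Your overall strategy is exactly the paper's: the proof given there amounts to rewriting Lemma \ref{lemma:bisectors} for each of $D_1$, $D_2$, $D_3$, discarding the inequalities belonging to the glueing sides, translating everything into the $\textbf z$-frame using that the moves are isometries, and identifying the resulting operators with the side pairings via Remark \ref{rk:samelines}. However, the bookkeeping --- which you yourself identify as the whole content of the argument --- is carried out incorrectly, in a way that would derail the derivation if followed literally. Each copy $D_i$ has exactly \emph{two} internal bounding bisectors, not four: from $D_3$ only $B(\gR 1)$ and $B(\gR 2^{-1})$ are glueing sides (Equations \eqref{eq:glueing13} and \eqref{eq:glueing23}), while $S(P^{-1})$ and $S(J^{-1})$ of $D_3$ are external (they form part of the targets of $Q$ and $K$). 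So six inequalities survive from each copy, and six pairs of them coincide once written in the common frame, which is how one arrives at twelve items.

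Second, $D_1$ cannot be omitted from the detailed accounting: the inequalities $\im(e^{i\theta}z_2)\geq0$ and $\im(e^{-i\phi'}y_1)\leq0$ arise \emph{only} from $D_1$ (from its $B(\gR 1)$ via $x_2=e^{i\theta}\tfrac{\sin\alpha}{\sin(\alpha-\theta)}z_2$, and from its $B(J^{-1})$ in the $\textbf v=\textbf y$ frame, respectively); neither appears among the eight inequalities of $D_3$ nor among the surviving ones of $D_2$. Conversely, if you take all four $y$-inequalities from $D_2$'s $\zzz$-frame and all four $w$-inequalities from $D_2$'s $\www$-frame, you retain the internal faces $\im(y_1)\leq0$ (glued to $D_1$'s $B(P^{-1})$ by \eqref{eq:glueing12}) and $\im(w_2)\geq0$ (glued to $D_3$ by \eqref{eq:glueing23}), and you miss $\im(e^{i\theta}w_2)\geq0$, which comes from $D_3$'s $\www$-coordinates $\textbf u$ via $u_2=e^{i\theta}\tfrac{\sin(\alpha-\theta)}{\sin\alpha}w_2$. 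Once the assignment of the twelve surviving inequalities to the three copies is corrected, the remainder of your plan --- transporting by the isometries and rewriting the operators as $K^{\pm1}$, $K^{\pm2}$, $Q^{\pm1}$, $\gRR 1^{\pm1}$, $\gRR 2^{\pm1}$ using Remark \ref{rk:samelines} --- is exactly what the paper does.
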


\section{Main theorem}\label{sec:mainthm}

In this section we will state and prove that $D$ (or a suitable modification of $D$) is a fundamental domain for Deligne-Mostow lattices with 2-fold symmetry as parametrised in Section \ref{sec:values}.
To do this we will use the Poincaré polyhedron theorem, in the form given in \cite{livne}, \cite{boadiparker} and \cite{irene}. 
It states that if one had a polyhedron $D$ and a set of side pairing maps $\{T_i\}$ satisfying certain conditions, then $D$ is a fundamental domain for the action of $\Gamma =\langle T_i \rangle$.
The main condition to check in this case is that suitable images of the polyhedron under the side pairing maps tessellate around the ridges. 
The theorem also provides a presentation for the group, with the side pairings as generators and relations coming from the tessellation conditions. 

\subsection{Main theorem}\label{sec:main}

We can now state that $D$ just defined or a suitable modification of it is a fundamental domain for the lattices we are considering.

\begin{theo} \label{thm:main}
Let $\Gamma$ be one of the subgroups of $PU(2,1)$ in Table \ref{table:values}.
Then a suitable modification of the polyhedron $D$ defined in Section \ref{sec:2fold} is a fundamental domain for $\Gamma$.
More precisely the fundamental domain is $D$ up to make some ridges collapse to a point when some parameters are degenerate (negative of infinite) according to the following table. 

\begin{center}
\begin{longtable}{|m{4cm}|m{2cm}|m{6cm}|}
\hline
Lattice & Deg. par. & Ridges collapsing \\
\hline
\emph{(4,4,6), (4,4,5)} & & \\
\hline
\emph{(3,4,4), (2,4,3), (3,3,4)} & $l'$, $d$ & $F(\gA 0,\gRR 2^{-1}), F(\gRR 2,\gRR 1^{-1})$, $F(\gA 0^{-1},\gRR 1)$, $F(Q,Q^{-1})$ \\
\hline
\emph{(2,6,6)} & $l$, $d$ & $F(Q,Q^{-1}), F(K,\gR 0^{-1})$, \ $F(K^{-1},\gR 0)$ \\
\hline
\emph{(2,3,3)} & $l$, $l'$, $d$ & $F(Q,Q^{-1}), F(K,\gR 0^{-1})$, $F(K^{-1},\gR 0), F(\gA 0,\gRR 2^{-1})$, $F(\gRR 2,\gRR 1^{-1})$, $F(\gA 0^{-1},\gRR 1)$ \\
\hline
\emph{(3,3,3), (4,4,3), (6,6,3)} & $k'$, $l'$, $d$ & $F(\gA 0,\gA 0^{-1}), F(Q,Q^{-1})$, $F(\gRR 2,\gRR 1^{-1})$, $F(\gA 0^{-1},\gRR 1)$ \\
\hline
\emph{(2,3,3)} & $k'$ & $F(\gA 0,\gA 0^{-1})$ \\
\hline
\end{longtable}
\end{center}

Moreover, a presentation for $\Gamma$ is given by 
\[
\Gamma=\left\langle 
\begin{array}{l l} 
K,Q,\gRR 1, \\
\gRR 2,\gR 0,\gA 0
\end{array}
\colon
\begin{array}{l l l l}
\gRR 1^p=\gRR 2^p=\gR 0^{p'}=\gA 0^{k'}=(Q^{-1}K)^k=
(\gR 0K)^l=I, \\
(\gA 0\gRR 2\gRR 1)^{l'}=Q^{2d}=I, \quad Q=\gRR 1\gR 0=\gR 0\gRR 2=\gRR 2^{-1}Q\gRR 1, \\ 
\gR 0^{-1}\gA 0\gR 0=\gA 0=K^{-2}, \quad \gRR 2 K=K \gRR 1
\end{array} \right\rangle,
\]
where the first two relations hold as long as the order is finite and positive.
\end{theo}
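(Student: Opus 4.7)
The plan is to apply the Poincar\'e polyhedron theorem in the form used in \cite{livne}, \cite{boadiparker}, \cite{irene}. The hypotheses to verify are: (i) the listed maps $K$, $Q$, $\gR 0$, $\gRR 1$, $\gRR 2$, $\gA 0$ genuinely pair the sides of $D$; (ii) for every ridge (2-cell) of $D$ the cycle obtained by following side pairings closes up to a complex reflection whose order divides the number of copies tiled around the ridge; (iii) in the non-cocompact cases (when some parameter equals $\infty$) the resulting cusp groups satisfy the completeness condition at each ideal fixed point. From (ii) one reads off the relations in the presentation of $\Gamma$.

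First I would verify (i). Lemma \ref{lemma:bisD} already expresses each half-space defining $D$ as an equidistance condition from a pair of vectors $\textbf{n}_{*i}$ and $T(\textbf{n}_{*j})$. Since each side pairing acts by isometries of the Hermitian form, the bisector bounding the source side is sent to the bisector bounding the target side. The combinatorial content, that the 0-, 1- and 2-cells match up, is checked by tracking vertex coordinates through the matrices of Section \ref{sec:moves} and their composites, using the $\zzz$- and $\www$-tables in each of the three configurations \conf 1, \conf 2, \conf 3. Equations \eqref{eq:glueing13}, \eqref{eq:glueing23}, \eqref{eq:glueing12} also need to be used to check that the candidate pairings $\gRR 1$, $\gRR 2$, and the compositions $K=JR_1=R_2J$, $Q=PR_1=R_2P$, $\gA 0=R_1^{-1}J^{-1}J^{-1}R_2^{-1}$ are single-valued on the identified boundary of $D$.

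Next I would handle (ii). Each ridge of $D$ is either contained in two sides meeting along a totally geodesic Lagrangian meridian, or in a Giraud disc shared by three sides (notably the central disc $G$ shared by $D_1$, $D_2$, $D_3$). For each ridge $F = S(T_1)\cap S(T_2)$ I would follow the cycle: apply $T_1^{-1}$ to pass to the copy $T_1^{-1}(D)$, locate the image of $F$ as a ridge of that copy shared with a side $S(T_2')$, and iterate. The composition must be a complex reflection fixing $F$, of order $2\pi$ divided by the dihedral angle of $D$ at $F$; the candidate reflections are built from $\gRR 1$, $\gRR 2$, $\gR 0$, $Q$, $K$, $\gA 0$ and products such as $Q^{-1}K$, $\gR 0 K$, with prescribed orders $p$, $p$, $p'$, $2d$, $k$, $l$, and these match the exponents in the stated presentation. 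The braid relation $\gRR 2 K = K \gRR 1$ and the triangle relations $Q=\gRR 1 \gR 0 = \gR 0 \gRR 2$ correspond to the cycles around the ridges where $D_1$, $D_2$, $D_3$ meet pairwise. The main obstacle will be the Giraud disc cycles: because a Giraud disc is not totally geodesic, one cannot simply compute a dihedral angle, and one must verify that the three sides meeting along $G$ form the orbit of a single side under a cyclic action of order three, with composite cycle transformation equal to the claimed power of $Q$ or $K$.

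Then I would address (iii), the degenerate cases listed in the theorem table. Proposition \ref{prop:gencollapse}, applied separately in each of the three configurations \conf 1, \conf 2, \conf 3, identifies which vertices collapse for which lattice; crucially a vertex may collapse in one copy and remain regular in another, which explains why only a specific subset of ridges degenerates for each lattice. When a parameter becomes negative the relevant ridge collapses to an interior point and the cycle transformation is an elliptic reflection in a point (accounting for the minus sign); when a parameter is $\infty$ the ridge collapses to a point of $\partial \hc$ and the cycle transformation becomes parabolic. Whenever the inequalities \eqref{eq:PP-possible} fail I would invoke Remark \ref{rk:kcomb} and switch to the modified configuration of Section \ref{sec:kneg}, redo the vertex list and the side-intersection analysis, and re-verify (i) and (ii) for that case. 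For the cusp cases ($l$, $l'$, or $d$ equal to $\infty$) I would then check the completeness condition by exhibiting a horoball based at the collapsed ideal vertex that is preserved by the corresponding cusp stabiliser; this is the step where finiteness of the quotient volume is actually used. Once (i)--(iii) are checked, the presentation follows tautologically because each cycle transformation gives exactly one relation, and the theorem is complete.
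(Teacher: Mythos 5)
Your plan is essentially the paper's proof: apply the Poincar\'e polyhedron theorem with the side pairings $K,Q,\gR 0,\gRR 1,\gRR 2,\gA 0$ of Section \ref{sec:2fold}, check the tessellation condition ridge by ridge according to whether the ridge lies in a Giraud disc, a Lagrangian plane or a complex line, read off the presentation from the cycle transformations, and handle the degenerate lattices via Proposition \ref{prop:gencollapse} applied in each of the three configurations together with the modified triangle configuration of Section \ref{sec:kneg} when $k'$ is not positive. The one place where you should be more explicit is the Giraud-disc ridges: closing up the cycle (your ``cyclic action of order three'') is not by itself enough, since the disc is not totally geodesic and there is no dihedral angle to sum; the paper instead uses the equidistance inequalities of Lemma \ref{lemma:bisD} to show the three images of $D$ are pairwise disjoint and cover a neighbourhood of the ridge, a tool you already invoke in step (i) and should redeploy here.
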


The reason for the ridges to collapse to a point (except for $k'$, which is treated in Section \ref{sec:kneg}) relies on the combinatorial structure of the polyhedron as explained in Section \ref{sec:degen}.
More precisely,
\begin {itemize}
\item First consider the case when $d <0$ or $d=\infty$.
By definition (see \eqref{eq:p,p',k,k',l,l',d}), this is equivalent to say that $\pi-\alpha-\theta \leq 0$.
Remembering Proposition \ref{prop:gencollapse} and using the notation of Remark \ref{rk:samelines}, one can see that the vertices on $L_{*0}$ collapse when $\pi-\alpha-\theta \leq 0$. 
Since these three vertices form the ridge $F(Q,Q^{-1})$, this ridge collapses when $d <0$ or $d=\infty$. 
\item Similarly, when $l<0$ or $l=\infty$, by definition, we have $\alpha-\theta-\phi \leq 0$.
Now the vertices on $L_{*3}$ collapse when  $\alpha-\theta-\phi \leq 0$ and so do the ones on $L_{*1}$.
Since $F(K^{-1},\gR 0)$ is formed of the vertices contained in $L_{*3}$ and $F(K,\gR 0^{-1})$ of the ones contained in $L_{*1}$, they degenerate when $l<0$ or $l=\infty$.
\item Now assume $l'<0$ or $l'=\infty$, i.e. $\pi-\alpha-\phi \leq 0$.
By Proposition \ref{prop:gencollapse}, the vertices on $L_{*2}(\alpha,\pi+\theta-\alpha,\theta,\phi)$, $L_{*3}(\alpha, \alpha,\theta,\phi)$ and $L_{*1}(\pi+\theta-\alpha, 2\alpha-\pi,\pi+\theta+\phi-2\alpha)$ all degenerate when $\pi-\alpha-\phi \leq 0$. 
Then the claim of the theorem follows from the fact that $F(\gRR 1,\gA 0^{-1})$, $F(\gRR 1^{-1},\gRR 2)$ and $F(\gRR 2^{-1},\gA 0)$ are bounded by the vertices contained in $L_{*3}(\alpha, \alpha,\theta,\phi)$, $L_{*2}(\alpha,\pi+\theta-\alpha,\theta,\phi)$ and $L_{*1}(\pi+\theta-\alpha, 2\alpha-\pi,\pi+\theta+\phi-2\alpha)$ respectively.
\item Finally, the case of $k'$ negative is treated in Section \ref{sec:kneg}.
\end{itemize}

An alternative presentation for the lattices can be obtained by remembering that $K=Q \gA 1$ and substituting $Q=\gRR 1 \gR 0$, $K=\gRR 1 \gR 0 \gA 1$, $\gRR 2= \gR 0^{-1} \gRR 1 \gR 0$ and $\gA 0=(\gRR 1 \gR 0 \gA 1)^{-2}$. 
Then
\[
\Gamma=\left\langle 
\begin{array}{l} 
\gRR 1, \gR 0,\gA 1
\end{array}
\colon
\begin{array}{l l l l}
\gRR 1^p=\gR 0^{p'}=(\gRR 1 \gR 0 \gA 1)^{2k'}=\gA 1^k=
(\gR 0 \gRR 1 \gR 0 \gA 1)^l=I, \\
(\gA 1\gR 0)^{2l'}=(\gRR 1 \gR 0)^{2d}=I, \quad \br 4 {\gRR 1}{\gR 0}, \\ 
\br 2 {(\gRR 1 \gR 0 \gA 1)^{-2}}{\gR 0}, \quad \br 2 {\gA 1}{\gRR 1}
\end{array} \right\rangle,
\]
where, following \cite{dppcommens}, $\br i T S$ is the braid relation of length $i$ on $T$ and $S$. 

\subsection{Volume}

The volume of the quotient is a multiple of the orbifold Euler characteristic $\chi( \hc / \Gamma)$.
This multiple is $\frac{8\pi^2}{3}$ when the holomorphic sectional curvature is normalised to -1.
The orbifold Euler characteristic is calculated by taking the alternating sum of the reciprocal of the order or the stabilisers of each orbit of cell.

In the following table we list the orbits of facets by dimension, calculate the stabiliser of the first element in the orbit and give its order.
Later, we will explain how the table changes when considering the degenerations of $D$.

\begin{longtable}{|c|c|c|}
\hline
Orbit of the facet & Stabiliser & Order \\
\hline 
$\textbf{v}_1, \textbf v_2$ & $\langle \gA 1,\gRR 1 \rangle$ & $kp$ \\
$\textbf{v}_3, \textbf v_4$ & $\langle Q^2,\gRR 1 \rangle$ & $pd$ \\
$\textbf{v}_{16}, \textbf v_5$ & $\langle Q^2,\gR 0 \rangle$ & $p'd$ \\
$\textbf{v}_6, \textbf v_{10}$ & $\langle \gR 0K,\gRR 1 \rangle$ & $pl$ \\
$\textbf{v}_7, \textbf v_{11}$ & $\langle \gR 0K,\gA 0 \rangle$ & $k'l$ \\
$\textbf{v}_{8}, \textbf v_{9}, \textbf{v}_{17}, \textbf v_{24}$ & $\langle QK^{-1}, \gR 0K \rangle$ & $kl$ \\
$\textbf{v}_{18}, \textbf v_{14}, \textbf v_{20}, \textbf{v}_{22}, \textbf v_{23}, \textbf v_{12}$ & $\langle \gA 0\gRR 2\gRR 1, \gA 1 \rangle$ & $l'k$ \\
$\textbf{v}_{19}, \textbf v_{13}, \textbf z_{21}$ & $\langle \gA 0\gRR 2\gRR 1,\gR 0 \rangle$ & $p'l'$ \\
$\textbf v_0$ & $\langle \gR 0,\gA 0 \rangle$ & $k'p'$ \\
\hline
$\gamma_{1,3}, \gamma_{2,4}$ & $\langle \gRR 1 \rangle$ & $p$\\
$\gamma_{1,6}, \gamma_{2,10}$ & $\langle \gRR 1 \rangle$ & $p$\\
$\gamma_{1,12}, \gamma_{2,23}, \gamma_{2,14}, \gamma_{1,18}$ & $\langle \gA 1 \rangle$ & $k$ \\
$\gamma_{3,5}, \gamma_{4,16}, \gamma_{4,5}, \gamma_{3,16}$ & $\langle Q^2 \rangle$ & $d$ \\
$\gamma_{3,6}, \gamma_{4,10}$ & $\langle \gRR 1 \rangle$ & $p$\\
$\gamma_{5,13}, \gamma_{16,19}, \gamma_{16,21}$ & $\langle \gR 0 \rangle$ & $p'$\\
$\gamma_{6,8}, \gamma_{10,24}, \gamma_{9,10}, \gamma_{6,17}$ & $\langle \gR 0K \rangle$ & $l$ \\
$\gamma_{7,8}, \gamma_{11,24}, \gamma_{9,11}, \gamma_{7,17}$ & $\langle \gR 0K \rangle$ & $l$ \\
$\gamma_{7,11}$ & $\langle K \rangle$ & $2k'$ \\
$\gamma_{7,15}, \gamma_{11,15}$ & $\langle \gA 0 \rangle$ & $k'$ \\
$\gamma_{8,14}, \gamma_{22,24}, \gamma_{17,20}, \gamma_{9,18}, \gamma_{23,8}, \gamma_{9,12}$ & $\langle \gA 1 \rangle$ & $k$\\
$\gamma_{12,13}, \gamma_{21,22}, \gamma_{18,19}, \gamma_{21,23}, \gamma_{19,20}, \gamma_{13,14}$ & $\langle \gRR 1 \gA 0 \gRR 2 \rangle$ & $l'$\\
$\gamma_{12,14}, \gamma_{22,23}, \gamma_{18,20}$ & $\langle \gRR 2^{-1} K \rangle$ & $2l'$\\
$\gamma_{15,19}, \gamma_{15,21}$ & $\langle \gR 0 \rangle$ & $p'$\\
\hline
$\begin{array}{l l}
F(K,Q) , &  F(K^{-1}, Q^{-1})
\end{array}$ 
& $\langle \gA 1\rangle$ & $k$ \\
$\begin{array}{l l}
F(K^{-1},\gR 0), & F(K,\gR 0^{-1})
\end{array}$ 
& $\langle K\gR 0 \rangle$ & $l$ \\
$F(\gR 0, \gR 0^{-1}) $
& $\langle \gR 0 \rangle$ & $p'$ \\
$F(Q,Q^{-1}) $
& $\langle Q \rangle$ &  $2d$ \\
$\begin{array}{l l l}
F(\gRR 1,\gA 0^{-1}) , & F(\gRR 1^{-1},\gRR 2), & F(\gRR 2^{-1},\gA 0)
\end{array}$
& $\langle \gRR 1 \gA 0 \gRR 2 \rangle$ & $l'$ \\
$F(\gRR 1, \gRR 1^{-1}) $
& $\langle \gRR 1 \rangle$ & $p$ \\
$F(\gRR 2, \gRR 2^{-1}) $
& $\langle \gRR 2 \rangle$ &  $p$ \\
$F(\gA 0, \gA 0^{-1}) $
& $\langle A'_1\rangle$ &  $k'$ \\
$\begin{array}{l l l l}
F(K,\gRR 1), & F(K,\gRR 1^{-1}), & F(K^{-1},\gRR 2^{-1}), & F(K^{-1},\gRR 2) 
\end{array}$
& 1 & 1 \\
$\begin{array}{l l l l}
F(\gRR 1,Q) , & F(\gRR 2,Q^{-1}), & F(\gRR 2^{-1},Q^{-1}) , & F(\gRR 1^{-1},Q)
\end{array}$
& 1 & 1 \\
$\begin{array}{l l l l}
F(\gA 0,\gR 0) , & F(\gA 0^{-1},\gR 0) , & F(\gA 0^{-1},\gR 0^{-1}) , & F(\gA 0,\gR 0^{-1})
\end{array}$
& 1 & 1 \\
$\begin{array}{l l l}
F(K,K^{-1}), & F(K^{-1},\gA 0), & F(K,\gA 0^{-1}) 
\end{array}$
& 1 & 1 \\
$\begin{array}{l l l}
F(\gRR 1,\gR 0^{-1}) , & F(\gRR 1^{-1},Q^{-1}), & F(Q,\gR 0) 
\end{array}$
& 1 & 1 \\
$\begin{array}{l l l}
F(\gR 0^{-1},Q^{-1}), & F(Q,\gRR 2), & F(\gRR 2^{-1},\gR 0)
\end{array}$
& 1 & 1 \\
\hline 
$S(K),S(K^{-1})$ & 1 & 1 \\
$S(Q),S(Q^{-1})$ & 1 & 1 \\
$S(\gRR 2),S(\gRR 2^{-1})$ & 1 & 1 \\
$S(\gRR 1),S(\gRR 1^{-1})$ & 1 & 1 \\
$S(\gR 0),S(\gR 0^{-1})$ & 1 & 1 \\
$S(\gA 0),S(\gA 0^{-1})$ & 1 & 1 \\
\hline 
$D$ & 1 & 1 \\
\hline
\caption{The stabilisers when all values are positive and finite.}
\label{table:nothing}
\end{longtable}

Then the orbifold Euler characteristic of $D$ is given by 
\begin{align}
\chi (\hc / \Gamma)&=
\frac{1}{kp}+\frac{1}{pd}+\frac{1}{dp'}
+\frac{1}{pl}+\frac{1}{k'l}+\frac{1}{kl}
+\frac{1}{l'k}+\frac{1}{p'l'}+
\frac{1}{k'p'} \nonumber \\
&-\frac{1}{p}-\frac{1}{p}-\frac{1}{k}-\frac{1}{d}-\frac{1}{p}
-\frac{1}{p'}-\frac{1}{l}-\frac{1}{l}
-\frac{1}{2k'}-\frac{1}{k'}-\frac{1}{k}-\frac{1}{l'}
-\frac{1}{2l'}-\frac{1}{p'} \nonumber \\
&+\frac{1}{k}+\frac{1}{l}+\frac{1}{p'}+\frac{1}{2d}+\frac{1}{l'}
+\frac{1}{p}+\frac{1}{p}+\frac{1}{k'}+1+1+1+1+1+1 \nonumber \\
&-1-1-1-1-1-1+1
\end{align}
and the volume is $\frac{8 \pi^2}{3} \chi(\hc / \Gamma)$.

Now we need to explain how to modify the table when calculating the orbifold Euler characteristic for one of the degenerations of $D$. 

\begin{itemize}
\item First consider the case when $d$ is  negative or infinite. 
Then the vertices $\textbf v_3$, $\textbf v_4$, $\textbf v_5$ and $\textbf v_{16}$ collapse to a single point. 
This means that the two orbits containing  them will collapse to only one orbit. 
The new vertex is stabilised by $\langle \gRR 1, \gR 0 \rangle$ and we need to calculate its order. 
This is similar to the proof of Proposition 2.3 of \cite{dppcommens} (adapting the argument to complex reflections with different orders) and to proof of 4.4, 4.5 and 4.6 in \cite{survey}. 
Now, $\gR 0$ has eigenvalues $e^{i \theta'}, 1, 1$, while $\gRR 1$ has eigenvalues $e^{2 i \theta},1,1$. 
In other words, remembering $\theta'= \frac{2\pi}{p'}$ and $\theta=\frac{\pi}{p}$, $\gR 0$ and $\gRR 1$ have eigenvalues $e^{2 i \pi /p'},1,1$ and $e^{2 i \pi/p},1,1$ respectively. 
Now consider $\gRR 1\gR 0$. It has eigenvalues $1, e^{i(\alpha+\theta)}, -e^{i(\alpha+\theta)}$, which we can write as $1, e^{i(\frac{\pi}{p'}+\frac{\pi}{p} +\frac{\pi}{2})}, e^{i(\frac{\pi}{p'}+\frac{\pi}{p} -\frac{\pi}{2})}$ because $\theta'=2\alpha-\pi$. 
In this way the part acting on the sphere orthogonal to the common eigenspace is in $SU(2)$. 
This means that $\langle \gR 0, \gRR 1 \rangle$ is a central extension of a $(2,p,p')$-triangle group.
Remembering that a $(2,a,b)$-triangle group has order $\frac{4ab}{2a+2b-ab}$ and the definition of the parameters \eqref{eq:p,p',k,k',l,l',d}, the order of the triangle group is $-2d$.
Since $\pi-\alpha-\theta=\frac{\pi}{d}$, the eigenvalues of $(\gR 0\gA 1)^2$ are $e^{\frac{2\pi}{d}}, e^{\frac{2\pi}{d}}, 1$ and hence the order of the centre is $-d$. 
So the order of the stabiliser is $2d^2$. 
Moreover, the line of the table corresponding to the edges between these three points (so the line of the orbit of $\gamma_{3,5}$) needs to be eliminated and so does the line corresponding to the ridge $F(Q,Q^{-1})$.
\item Now consider the case of $l'$ negative or infinite. 
We have three triples of vertices collapsing to the three vertices $\vertr {12}{13}{14}$, $\vertr {18}{19}{20}$ and $\vertr {21}{22}{23}$, where $\vertr i j k$ is obtained collapsing vertices $\textbf v_i$, $\textbf v_j$ and $\textbf v_k$. 
They are in a unique orbit and $\vertr {18}{19}{20}$ is stabilised by $\langle \gR 0, Q^{-1}K \rangle=\langle \gR 0, \gA 1 \rangle$. 
We need to calculate its order.
Now, $\gR 0$ has eigenvalues $e^{i \theta'}, 1, 1$, while $\gA 1$ has eigenvalues $e^{2 i \phi},1,1$. 
In other words, remembering $\theta'= \frac{2\pi}{p'}$ and $\phi=\frac{\pi}{k}$, $\gR 0$ and $\gA 1$ have eigenvalues $e^{2 i \pi /p'},1,1$ and $e^{2 i \pi/k},1,1$ respectively. 
Now consider $\gR 0\gA 1$. It has eigenvalues $1, e^{i(\alpha+\phi)}, -e^{i(\alpha+\phi)}$, which we can write as $1, e^{i(\frac{\pi}{p'}+\frac{\pi}{k} +\frac{\pi}{2})}, e^{i(\frac{\pi}{p'}+\frac{\pi}{k} -\frac{\pi}{2})}$. 
This means that $\langle \gR 0, \gA 1 \rangle$ is a central extension of a $(2,p',k)$-triangle group, which has order $\frac{4p'k}{2p'+2k-p'k}=-2l'$.
Since $\alpha+\phi-\pi=\frac{\pi}{l'}$, the eigenvalues of $(\gR 0\gA 1)^2$ are $e^{\frac{2\pi}{l'}}, e^{\frac{2\pi}{l'}}, 1$ and hence the order of the centre is $-l'$. 
This means that the order of $\langle \gR 0, \gA 1 \rangle$ is $2l'^2$.
Moreover, the two lines of the table corresponding to edges between the three collapsing points need to be eliminated. 
In other words, the lines of the orbits of $\gamma_{12,13}$ and $\gamma_{12,14}$ disappear from the table, together with the orbit of the three ridges that collapse. 
\item Now let us consider the parameter $l$. 
From Table \ref{table:valuespar} one can see that it is never negative. 
The only degeneration hence comes when it is infinite. 
This means that the two vertices obtained by triples collapsing are on the boundary and hence their stabiliser will have infinite order.
So the orbit of these two vertices disappears in the calculation of the orbifold Euler characteristic. 
Similarly, the two orbits of edges between collapsing vertices disappear from the calculation (the orbits of $\gamma_{6,8}$ and $\gamma_{7,8}$) and so does the orbit containing the two ridges that collapse to the two new points on the boundary. 
\item When $k'$ is negative, the vertices $\textbf v_0$, $\textbf v_7$ and $\textbf v_{11}$ collapse to a point (see Section \ref{sec:kneg}). 
This means that the two orbits of these three points collapse to a single one. 
It is easy to see that the new point is stabilised by $K$, $\gA 0$ and $\gR 0$, so the stabiliser is $\langle \gR 0, K \rangle$. 
We now need to calculate the order of this group.
Since $K^2=\gA 0^{-1}$ and $\gA 0$ commutes with $\gR 0$, the centre is generated by $K^2$, which has order $-k'$. 
Now, we know that $\gR 0 K$ has order $l$, so $\langle \gR 0, K \rangle$ modulo the centre would is a $(2,p',l)$-triangle group, which has order $-2k'$. 
So the order of $\langle \gR 0, K \rangle$ is $2k'^2$.
Moreover, the lines of the table corresponding to the two orbits of edges between these three points (i.e. the orbit of $\gamma_{7,11}$ and $\gamma_{7,0}$) disappear in the calculation and so does the line relative to $F(\gA 0,\gA 0^{-1})$. 
\end{itemize}

The orbifold Euler characteristic calculated with the modification of Table \ref{table:nothing} is coherent with the commensurability theorems we know between Deligne-Mostow lattices in $PU(2,1)$. 
The Table below summarises the values found in relation with the ones previously known.
The first part contains lattices that are commensurable according to Corollary 3.9 in \cite{survey}, which is Corollary 10.18 of \cite{dmbook} and have index 6. 
The second block correspond to commensurability stated in Theorem 3.10 in \cite{survey} and they also have index 6.
The third part of the table contains lattices which are commensurable according to Theorem 3.8 in \cite{survey}, which is Theorem 10.6 in \cite{dmbook}. 
They have index 2, except for the first one in the list, where the extra term in the index is given by the fact that the theorem doesn't consider the 3-fold symmetry of the lattice. 
Finally, Proposition 7.10 of \cite{dppcommens} shows that the Thompson group $E_2$ when $p=4$ is (conjuguate to) a subgroup of index 3 in the Deligne-Mostow group where $\mu=(3,3,5,6,7)/12$, which is exactly our $(3,4,4)$.

\begin{center}
\begin{tabular}{r r| l l || r r | l l }
Lattice & Volume & Volume & Lattice & Lattice & Volume & Volume & Lattice\\[7pt]
\toprule
(6,6,3) & $\frac{1}{3\cdot 2^2}$ & $\frac{1}{3^2 \cdot 2^3}$ & (6,2) &
(10,10,5) & $\frac{3}{2^2 \cdot 5}$ & $\frac{1}{2^3 \cdot 5}$ & (10,2) \\[7pt]
(12,12,6) & $\frac{7}{2^4 \cdot 3}$ & $\frac{7}{2^5 \cdot 3^2}$ & (12,2) &
(18,18,9) & $\frac{13}{2^2 \cdot 3^3}$ & $\frac{13}{2^3 \cdot 3^4}$ & (18,2) \\[3pt]
\midrule
(4,4,3) & $\frac{1}{3\cdot 2^2}$ & $\frac{1}{3^2 \cdot 2^3}$ & (4,3) &
(4,4,5) & $\frac{11 \cdot 3^3}{2^4 \cdot 5^2}$ & $\frac{11 \cdot 3}{2^5 \cdot 5^2}$ & (4,5) \\[7pt]
(4,4,6) & $\frac{13}{2^4 \cdot 3}$ & $\frac{13}{2^5 \cdot 3^2}$  & (4,6) &&&&\\[3pt]
\midrule
(2,6,6) & $\frac{1}{2^3}$ & $\frac{1}{3 \cdot 2^2}$ & (6,6) &
(2,3,3) & $\frac{1}{3 \cdot 2^3}$ & $\frac{1}{3 \cdot 2^2}$  & (3,3,3)\\[7pt]
(3,3,4) & $\frac{7}{2^4 \cdot 3}$ & $\frac{7}{2^5 \cdot 3}$  & (2,4,3) &&&&\\[3pt]
\midrule
(3,4,4) & $\frac{17}{3\cdot 2^5}$ & $\frac{17}{2^5}$  & $\mathcal T (4, E_2)$ &&&&\\[3pt]
\bottomrule
\end{tabular}
\end{center}

\subsection{Cycles}\label{sec:cycles}

The cycles given by Poincaré polyhedron theorem are the following. 
\begin{longtable}{c}
$F(K,Q) \xrightarrow{K} F(K^{-1}, Q^{-1}) \xrightarrow{Q^{-1}}
F(K,Q)$, 
\\
$F(K^{-1},\gR 0) \xrightarrow{\gR 0} F(K,\gR 0^{-1}) \xrightarrow{K} F(K^{-1},\gR 0)$, 
\\
$F(\gRR 1,\gA 0^{-1}) \xrightarrow{\gRR 1} F(\gRR 1^{-1},\gRR 2) \xrightarrow{\gRR 2}
F(\gRR 2^{-1},\gA 0) \xrightarrow{\gA 0} F(\gRR 1,\gA 0^{-1})$, 
\\
$F(\gR 0,\gR 0^{-1}) \xrightarrow{\gR 0} F(\gR 0,\gR 0^{-1})$,
\\
$F(Q,Q^{-1}) \xrightarrow{Q} F(Q,Q^{-1})$, 
\\
$F(\gRR 1,\gRR 1^{-1}) \xrightarrow{\gRR 1} F(\gRR 1,\gRR 1^{-1})$,
\\
$F(\gRR 2,\gRR 2^{-1}) \xrightarrow{\gRR 2} F(\gRR 2,\gRR 2^{-1})$,
\\
$F(\gA 0,\gA 0^{-1}) \xrightarrow{\gA 0} F(\gA 0,\gA 0^{-1})$,
\\
$F(K,\gRR 1) \xrightarrow{\gRR 1}
F(K,\gRR 1^{-1})\xrightarrow{K} F(K^{-1},\gRR 2^{-1}) \xrightarrow{\gRR 2^{-1}}
F(K^{-1},\gRR 2) \xrightarrow{K^{-1}} F(K,\gRR 1)$,
\\
$F(\gRR 1,Q) \xrightarrow{Q} F(\gRR 2,Q^{-1}) \xrightarrow{\gRR 2}
F(\gRR 2^{-1},Q^{-1}) \xrightarrow{Q^{-1}} F(\gRR 1^{-1},Q) \xrightarrow{\gRR 1^{-1}}
F(\gRR 1,Q)$,
\\
$F(\gA 0,\gR 0) \xrightarrow{\gA 0} F(\gA 0^{-1},\gR 0) \xrightarrow{\gR 0} F(\gA 0^{-1},\gR 0^{-1}) \xrightarrow{\gA 0^{-1}}
F(\gA 0,\gR 0^{-1}) \xrightarrow{\gR 0^{-1}} 
F(\gA 0,\gR 0)$,
\\
$F(K,K^{-1}) \xrightarrow{K} F(K^{-1},\gA 0) \xrightarrow{\gA 0}
F(K,\gA 0^{-1}) \xrightarrow{K} F(K,K^{-1})$,
\\
$F(\gRR 1,\gR 0^{-1}) \xrightarrow{\gRR 1} F(\gRR 1^{-1},Q^{-1}) \xrightarrow{Q^{-1}}
F(Q,\gR 0) \xrightarrow{\gR 0} F(\gRR 1, \gR 0^{-1})$,
\\
$F(\gR 0^{-1},Q^{-1}) \xrightarrow{Q^{-1}}
F(Q,\gRR 2) \xrightarrow{\gRR 2} F(\gRR 2^{-1},\gR 0) \xrightarrow{\gR 0} F(\gR 0^{-1},Q^{-1})$.
\end{longtable}

The cycles give the following transformations, where $\ell$ determines the power of $T$ which fixes the ridge pointwise and $\ell m$ is the order of $T$.
Note that for all of the 2-fold symmetry values that we are considering, $k, k', p, p', l, l'$ and $d$ are all integers (positive or negative). 

\begin{longtable}{|c|c|c||c|c|c|}
\hline
Cycle transformation $T$
& $\ell$ & $m$
& Cycle transformation $T$
& $\ell$ & $m$ \\
\hline
$Q^{-1}K$ 
& 1 & $k$ 
& $\gA 0$ 
& 1 & $k'$ \\

$\gR 0$
& 1 & $p'$ 
& $\gRR 1\gA 0\gRR 2=(\gRR 2^{-1}K)^2$
& 1 & $l'$ \\

$\gRR 2$ 
& 1 & $p$ 
& $\gRR 1$
& 1 & $p$ \\

$Q$ 
& 2 & $d$ 
& $\gR 0K$
& 1 & $l$ \\

$\gR 0Q^{-1}\gRR 1=\id$ 
& 1 & 1 
& $\gRR 2 Q^{-1}\gR 0=\id$
& 1 & 1 \\

$\gRR 1K^{-1}\gRR 2^{-1}K=\id$
& 1 & 1 
& $\gRR 1^{-1}Q^{-1}\gRR 2Q=\id$
& 1 & 1 \\

$\gA 0\gR 0^{-1}\gA 0^{-1}\gR 0=\id$
& 1 & 1 
& $K\gA 0K=\id$
& 1 & 1 \\
\hline
\caption{The cycle transformations and their orders.}
\label{table:cycletransf}
\end{longtable}

When the order of a cycle transformation is negative, we know from Section \ref{sec:main} that the corresponding ridge collapses to a point and so the transformation is a complex reflection to a point. 
When the order is $\infty$, then the cycle transformation is parabolic.

\subsection{Tessellation}

The proof of Theorem \ref {thm:main} consists in proving that $D$ and our side pairings satisfy the hypothesis of Poincaré polyhedron theorem.  
This is done in the same way as in \cite {livne}, \cite {boadiparker} and \cite {irene}. 
We will include some proofs of the tessellation condition, since it is the hardest to prove. 
We will divide the ridges in three groups. 
Looking at the structure of sides in Figure \ref{fig:sidecombinatorics}, one can see that the ridges are contained in either a Giraud disc, a Lagrangian plane or a complex line. 
We will include the proof of the tessellation condition for one ridge from each type.

\paragraph{Ridges contained in a Giraud disc.}

The ridges contained in a Giraud disc are $F(K,K^{-1})$, $F(K,\gA 0^{-1})$, $F(\gA 0,K^{-1})$, $F(\gRR 1,\gR 0^{-1})$, $F(\gRR 1^{-1},Q^{-1})$, $F(Q,\gR 0)$, $F(\gR 0^{-1},Q^{-1})$, $F(Q,\gRR 2)$ and $F(\gRR 2^{-1},\gR 0)$.
To prove the tessellation condition for them, we will use Lemma \ref{lemma:bisD}. 
The proof follows proofs of Propositions 4.5 and 4.7 of \cite{livne}, Proposition 5.3 (first part of the proof) in \cite{boadiparker} and Proposition 8.7 of \cite{irene}.

\begin{prop}
The polyhedra $D$, $K(D)$ and $K\gA 0(D)=K^{-1}(D)$ tessellate around the ridge $F(K,K^{-1})$.
\end{prop}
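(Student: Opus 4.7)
The plan is to adapt the Voronoi-cell strategy used in Proposition 4.7 of \cite{livne}, the first part of Proposition 5.3 of \cite{boadiparker}, and Proposition 8.7 of \cite{irene}: identify the three polyhedra locally with the three Voronoi cells of a triple of points lying on the Giraud disc that carries the ridge, then invoke the general fact that three Voronoi cells of three points tessellate a neighborhood of the disc where all three pairwise bisectors meet.

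First I would apply Lemma \ref{lemma:bisD}. The bullets for $\im(e^{i\phi}z_1)$ and $\im(e^{-i\phi}w_1)$ translate the two sides $S(K)$ and $S(K^{-1})$ of $D$ into the Hermitian inequalities
\[
|\langle \textbf{z}, \textbf{n}_{*0}\rangle| \leq |\langle \textbf{z}, K^{-1}(\textbf{n}_{*0})\rangle|, \qquad |\langle \textbf{z}, \textbf{n}_{*0}\rangle| \leq |\langle \textbf{z}, K(\textbf{n}_{*0})\rangle|,
\]
so that near the ridge $D$ is the Voronoi cell of $[\textbf{n}_{*0}]$ among the three points $\{[\textbf{n}_{*0}], [K(\textbf{n}_{*0})], [K^{-1}(\textbf{n}_{*0})]\}$, and the ridge $F(K,K^{-1})$ sits on the Giraud $2$-disc where all three points are equidistant.

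Next I would apply the isometry $K$ (respectively $K^{-1}$) and use that $K$ preserves the Hermitian form to rewrite the defining inequalities of $D$ in transformed coordinates, obtaining that $K(D)$ is locally the Voronoi cell of $[K(\textbf{n}_{*0})]$ among $\{[\textbf{n}_{*0}], [K(\textbf{n}_{*0})], [K^{2}(\textbf{n}_{*0})]\}$, and similarly $K^{-1}(D) = K\gA 0(D)$ is the Voronoi cell of $[K^{-1}(\textbf{n}_{*0})]$ among $\{[\textbf{n}_{*0}], [K^{-1}(\textbf{n}_{*0})], [K^{-2}(\textbf{n}_{*0})]\}$. The cycle relation $K\gA 0 K = \id$ gives $\gA 0 = K^{-2}$; combining this with the Lemma \ref{lemma:bisD} bullets involving $K^{\pm 2}(\textbf{n}_{*0})$ and Giraud's theorem forces $[K^{2}(\textbf{n}_{*0})]=[K^{-1}(\textbf{n}_{*0})]$ on the relevant Giraud disc, so all three Voronoi descriptions refer to the same triple of points and the three cells partition a neighborhood of the ridge.

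The main obstacle I anticipate is precisely this matching of the third Voronoi-disc bisector, i.e., showing that $K(B(K^{-1}))$ (the implicit third wall bounding $K(D)$ at the ridge) coincides with $K^{-1}(B(K))$ (the implicit third wall bounding $K^{-1}(D)$), both being the Hermitian bisector between $[K(\textbf{n}_{*0})]$ and $[K^{-1}(\textbf{n}_{*0})]$. This is a direct Hermitian computation from the explicit matrix of $K = J\gR 1$ in \eqref{eq:Jmatrix} together with $\gA 0 = K^{-2}$, of the same type as the bisector computations in Section 4 of \cite{livne}. A secondary routine verification that all other bullets of Lemma \ref{lemma:bisD} give strict inequalities in a neighborhood of the ridge can be carried out at the six vertices bounding $F(K,K^{-1})$ listed in Table \ref{table:vertD} and extended by convexity.
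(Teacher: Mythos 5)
Your overall strategy is exactly the paper's: near $F(K,K^{-1})$ each of $D$, $K(D)$, $K^{-1}(D)$ is exhibited, via Lemma \ref{lemma:bisD} and the $K$-equivariance of the Hermitian form, as the region closest to one of the three points $\textbf n_{*0}$, $K(\textbf n_{*0})$, $K^{-1}(\textbf n_{*0})$, which gives disjointness, and covering follows because the ridge is cut out by $\im(e^{i\phi}z_1)=0$ and $\im(e^{-i\phi}w_1)=0$. Your treatment of $D$ itself (the second and tenth bullets of Lemma \ref{lemma:bisD}) is the paper's verbatim.

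The gap is in the step you flag as the main obstacle, and your proposed fix would not work. The wall of $K(D)$ at $F(K,K^{-1})$ other than $S(K^{-1})$ is \emph{not} $K(B(K^{-1}))$: since the ridge cycle sends $F(K,\gA 0^{-1})\xrightarrow{K}F(K,K^{-1})$, that wall is $K(S(\gA 0^{-1}))$, whereas $K(B(K^{-1}))$ is the bisector equidistant from $K(\textbf n_{*0})$ and $K^{2}(\textbf n_{*0})$ and contains the \emph{image} ridge $F(K^{-1},\gA 0)$, not $F(K,K^{-1})$. Moreover the identity $[K^{2}(\textbf n_{*0})]=[K^{-1}(\textbf n_{*0})]$ that you want to force is equivalent to $K^{3}$ fixing $[\textbf n_{*0}]$, which fails in general ($K$ has order $2k'$ and does not fix $[\textbf n_{*0}]$), so the ``direct Hermitian computation'' you propose would not close the argument. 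The correct and much shorter resolution — the one the paper uses — is that the comparison between $K(\textbf n_{*0})$ and $K^{-1}(\textbf n_{*0})$ is already packaged in Lemma \ref{lemma:bisD}: its eighth bullet (the condition $\im(e^{-i\phi'}y_1)\leq 0$, involving $K^{-2}(\textbf n_{*0})$) applied to $K^{-1}(\textbf z)\in D$ gives, by equivariance,
\[
\lvert\langle\textbf z,K(\textbf n_{*0})\rangle\rvert\leq\lvert\langle\textbf z,K^{-1}(\textbf n_{*0})\rangle\rvert
\qquad\text{for }\textbf z\in K(D),
\]
and symmetrically the fifth bullet (involving $K^{2}(\textbf n_{*0})$) applied to $K(\textbf z)\in D$ handles $K^{-1}(D)$. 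In other words, you restricted attention to the two bullets corresponding to $S(K)$ and $S(K^{-1})$, but the polyhedron satisfies all twelve, and it is the two bullets coming from the $\arg y_1$ conditions, transported by $K^{\mp1}$, that supply the missing third inequality with no extra computation.
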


\begin{proof}
Take $\textbf z \in D$. 
By the second point of Lemma \ref{lemma:bisD}, $\textbf z$ is closer to $\textbf n_{*0}$ than to $K^{-1}(\textbf n_{*0})$. 
By the tenth point of the lemma, it is closer to $\textbf n_{*0}$ than to $K(\textbf n_{*0})$. 
Similarly, take a point $\textbf z \in K(D)$. 
This means that $K^{-1}(\textbf z) \in D$.
By the second point of the lemma applied to $K^{-1}(\textbf z) $, $\textbf z$ is closer to $K(\textbf n_{*0})$ than to $\textbf n_{*0}$. 
By the eighth point of the lemma, it is closer to $K(\textbf n_{*0})$ than to $K^{-1}(\textbf n_{*0})$. 
Finally, take a point $\textbf z \in K^{-1}(D)$. 
This means that $K(\textbf z) \in D$.
By the fifth point of the lemma applied to $K(\textbf z)$, $\textbf z$ is closer to $K^{-1}(\textbf n_{*0})$ than to $K(\textbf n_{*0})$. 
By the tenth point of the lemma, it is closer to $K^{-1}(\textbf n_{*0})$ than to $\textbf n_{*0}$. 

This clearly implies that the three images are disjoint and since $F(K,K^{-1})$ is defined by $\im (e^{i \phi} z_1)=0$ and $\im(e^{-i \phi}w_1)=0$, a small enough neighbourhood of the ridge is covered by the three images.
\end{proof}

\paragraph{Ridges contained in a Lagrangian plane.}

The ones contained in a Lagrangian plane are ridges $F(K,\gRR 1)$, $F(K^{-1},\gRR 2^{-1})$, $F(K^{-1},\gRR 2)$, $F(K,\gRR 1^{-1})$, $F(\gRR 1,Q)$, $F(\gRR 2,Q^{-1})$, $F(\gRR 2^{-1},Q^{-1})$, $F(\gRR 1^{-1},Q)$, $F(\gA 0^{-1}\gR 0)$, $F(\gA 0^{-1},\gR 0^{-1})$, $F(\gA 0,\gR 0^{-1})$ and $F(\gA 0,\gR 0)$.
The proof is done by studying the sign of some of the coordinates and it follows proofs of Proposition 4.8 of \cite{livne}, Proposition 5.3 (end of the proof) of \cite{boadiparker} and Proposition 8.8 of \cite{irene}. 
We will prove the property for the first ridge mentioned. 
The others are done in a similar way.

\begin{prop}
The polyhedra $D$, $\gRR 1^{-1}(D)$, $K^{-1}(D)$ and $\gRR 1^{-1}K^{-1}(D)$ tessellate around the ridge $F(K,\gRR 1)$.
\end{prop}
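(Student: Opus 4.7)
The plan is to mirror the arguments in Proposition 4.8 of \cite{livne}, Proposition 5.3 (end) of \cite{boadiparker} and Proposition 8.8 of \cite{irene}, adapted to the two-fold symmetric setting. The ridge $F(K,\gRR 1)=S(K)\cap S(\gRR 1)$ sits in the totally real Lagrangian plane $B(K)\cap B(\gRR 1)$, and the two bisectors meet it transversally there. A small tubular neighborhood of the ridge is therefore cut by $B(K)$ and $B(\gRR 1)$ into four connected components, which I will index by the signs of the two real functions $a(\textbf z):=\im(e^{i\phi}z_1)$ and $b(\textbf z):=\im(e^{-i\theta}z_2)$: these vanish precisely on $B(K)$ and $B(\gRR 1)$ respectively, and their differentials are linearly independent on the ridge. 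The aim is to show that each of the four polyhedra in the statement occupies exactly one of these quadrants.

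The first sign check uses Lemma \ref{lemma:bisD} directly: the second and third items tell us that $D$ lives in the quadrant $\{a\geq 0,\ b\leq 0\}$, and on a sufficiently small neighborhood of the ridge every other defining inequality of $D$ from the lemma is strictly satisfied and hence poses no local constraint. For the three remaining copies I would appeal to the side-pairing property. Since $\gRR 1$ sends $S(\gRR 1)\subset B(\gRR 1)$ to $S(\gRR 1^{-1})$, the image $\gRR 1^{-1}(D)$ must lie on the opposite side of $B(\gRR 1)$ from $D$; as $\gRR 1$ is diagonal and fixes the $z_1$-coordinate, the sign of $a$ is preserved, placing $\gRR 1^{-1}(D)$ in $\{a\geq 0,\ b\geq 0\}$. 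Analogously $K^{-1}(D)$ sits across $B(K)$ from $D$, and once one checks that $K$ does not flip the sign of $b$ near the ridge, $K^{-1}(D)$ falls into $\{a\leq 0,\ b\leq 0\}$. The fourth piece is handled via the braid relation $\gRR 2 K=K\gRR 1$, one of the defining relations of $\Gamma$ in Theorem \ref{thm:main}: this identifies $\gRR 1^{-1}K^{-1}(D)$ with $K^{-1}\gRR 2^{-1}(D)$ and composes the two previous sign flips to place it unambiguously in the remaining quadrant $\{a\leq 0,\ b\geq 0\}$. Disjointness of these open quadrants and the fact that their union covers a neighborhood of the ridge then yield the tessellation.

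The main obstacle is the careful sign-tracking through $K$, and dually through $\gRR 2$. Unlike $\gRR 1$, whose matrix is diagonal and leaves $z_1$ untouched, $K$ mixes the coordinates, so one cannot read off directly from its matrix that the sign of $b$ is preserved when one crosses $B(K)$. I would handle this either by explicit computation on a local parametrisation of the ridge, as in the cited references, or by a consistency argument: the cycle transformation $K^{-1}\gRR 2^{-1}K\gRR 1$ is the identity (which is precisely the braid relation), so the four copies of $D$ close up without gap or overlap around the ridge. Having placed three of them correctly, the fourth is then forced into the remaining quadrant. This closing-up argument via the cycle relation is ultimately what makes the tessellation work in situations where the action of the side pairings on the normal coordinates is not directly readable from their matrices.
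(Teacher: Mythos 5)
Your overall strategy is exactly the paper's: near this Lagrangian ridge one separates the four copies by the signs of two imaginary parts of rotated coordinates, records the sign pattern of each copy, and concludes. However, your choice of the second separating function is wrong, and the argument as written fails at that point. The ridge $F(K,\gRR 1)$ lies in the \emph{source} side of the pairing $\gRR 1$, which is the wall $\arg z_2=-\theta$, i.e.\ $\{\im(e^{i\theta}z_2)=0\}$: recall that $D$ has $\arg z_2\in(-\theta,\theta)$ and $\gRR 1=\mathrm{diag}(1,e^{2i\theta},1)$ carries the wall $\arg z_2=-\theta$ onto the wall $\arg z_2=\theta$. Your function $b=\im(e^{-i\theta}z_2)$ instead vanishes on $B(\gRR 1^{-1})$, the wall $\arg z_2=\theta$, which does not contain this ridge. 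Near $F(K,\gRR 1)$ one has $\im(e^{-i\theta}z_2)\le 0$ on \emph{all four} polyhedra (for instance $\gRR 1^{-1}(D)$ has $\arg z_2\in(-3\theta,-\theta)$, so both $\im(e^{i\theta}z_2)$ and $\im(e^{-i\theta}z_2)$ are nonpositive there), so $b$ does not separate $D$ from $\gRR 1^{-1}(D)$ and your ``four quadrants'' are not four. In particular your assertion that $\gRR 1^{-1}(D)$ lies on the opposite side of $\{b=0\}$ from $D$ is false. The fix is to take $b=\im(e^{i\theta}z_2)$; in the coordinates $(\im(e^{i\phi}z_1),\im(e^{i\theta}z_2))$ the four copies $D$, $\gRR 1^{-1}(D)$, $K^{-1}(D)$, $\gRR 1^{-1}K^{-1}(D)$ carry the sign patterns $(+,+)$, $(+,-)$, $(-,+)$, $(-,-)$ respectively, which is precisely the table in the text.

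With that correction your argument coincides with the paper's, but note that the step you defer --- checking that $K^{-1}$ does not flip the sign of $\im(e^{i\theta}z_2)$ near the ridge --- is the only substantive computation and cannot be obtained from the closing-up argument alone: the cycle relation forces the \emph{fourth} copy into the remaining sector only once the first three are placed, and placing $K^{-1}(D)$ already requires this sign check. The paper carries it out by factoring $K=Q\gA 1$, where $\gA 1$ multiplies $z_1$ by $e^{2i\phi}$ and $Q$ is the change from $\textbf z$- to $\textbf w$-coordinates, so that the $z_2$-inequalities satisfied by $K^{-1}(D)$ are read off from the $w_2$-inequalities defining $D$. Your use of the braid relation $\gRR 2K=K\gRR 1$ to rewrite $\gRR 1^{-1}K^{-1}=K^{-1}\gRR 2^{-1}$ for the fourth copy is correct and consistent with the ridge cycle $F(K,\gRR 1)\to F(K,\gRR 1^{-1})\to F(K^{-1},\gRR 2^{-1})\to F(K^{-1},\gRR 2)\to F(K,\gRR 1)$ listed in Section \ref{sec:cycles}.
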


\begin{proof}
Let us consider points in $D$, $\gRR 1^{-1}(D)$, $K^{-1}(D)$ and $\gRR 1^{-1}K^{-1}(D)$ and record the sign of the values of $\im (z_1)$, $\im(e^{i\phi}z_1)$, $\im (e^{i \theta} z_2)$ and $\im (e^{-i\theta} z_2)$ for them.
They are shown in the following table. 

\begin{longtable}{|c|c|c|c|c|}
\hline
 & $\im(z_1)$ & $\im(e^{i\phi}z_1)$ & $\im (e^{i \theta} z_2)$ & $\im (e^{-i\theta} z_2)$ \\
\hline
$D$ & - &+ & + & - \\
$\gRR 1^{-1}(D)$  & - &+ & - & - \\
$K^{-1}(D)$  & - &- & + & - \\
$\gRR 1^{-1}K^{-1}(D)$  & - &- & - & - \\
\hline
\end{longtable}

The first row can be deduced using the definition of $D$ in terms of the arguments of the coordinates. 
The second row can be deduced by considering that the action of $\gRR 1$ only consists in multiplying the coordinate $z_2$ by $e^{2 i \theta}$. 
The third row can be deduced by the fact that applying $K$ corresponds to first applying $\gA 1$, which multiplies the coordinate $z_1$ by $e^{2 i \phi}$ and then applying $Q$, which relates the $\textbf z$ coordinates to the $\textbf w$ coordinates. 

The ridge $F(K,\gRR 1)$ is defined by $\im(e^{i \phi}z_1)=0$ and $\im(e^{i \theta}z_2)=0$ and in a neighbourhood of the ridge the images considered coincide with the sectors where the values are either positive or negative. 
Combining the information of the table one gets the tessellation as required. 
\end{proof}

\paragraph{Ridges contained in complex lines.}

The ridges contained in complex lines are $F(K,Q)$, $F(K^{-1},Q{-1})$, $F(K, \gR 0^{-1})$, $F(K^{-1},\gR 0)$, $F(\gR 0,\gR 0)$, $F(Q,Q^{-1})$,  $F(\gRR 2,\gA 0^{-1})$, $F(\gRR 1^{-1},\gRR 2)$, $F(\gRR 1^{-1},\gA 0)$, $F(\gRR 1,\gRR 1^{-1})$, $F(\gRR 2,\gRR 2^{-1})$ and $F(\gA 0,\gA 0^{-1})$.
The strategy consists in showing that the polyhedron (and suitable images) cover a sector of amplitude $\psi$ and that the cycle transformation acts on the orthogonal of the complex line as a rotation through angle $\psi$.
Then each power of the cycle transformation covers a sector and since $\psi $ is always $\frac{2\pi}{a}$ with $a $ integer, we cover the whole space around the ridge.
The proofs are similar to the ones of Proposition 4.11 of \cite{livne}, Proposition 5.3 of \cite{boadiparker} (the middle part of the proof) and 8.10 of \cite{irene}. 

The cases of $F(\gRR 1,\gA 0^{-1})$, $F(Q,Q^{-1})$ and $F(K^{-1},\gR 0)$ (and the ones in their cycles) are an exception because the procedure is the same but after applying a suitable change of coordinates. 

The proofs for these cases are along the line of proof of Proposition 4.13 of \cite{livne} and of Proposition 8.11 of \cite{irene}. 
For completeness, we will include the proof of one of these ridges.

\begin{prop}
The polyhedra $D$, $\gA 0 (D)$ and $\gA 0 \gRR 2(D)$ and their images under $\gA 0\gRR 2\gRR 1$ tessellate around the ridge $F(\gRR 1,\gA 0^{-1})$.
\end{prop}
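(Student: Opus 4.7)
The plan is to follow the author's general strategy for ridges contained in complex lines (the third bucket), combined with the change of coordinates alluded to at the end of the section, along the lines of Proposition 4.13 of \cite{livne} and Proposition 8.11 of \cite{irene}. Looking at the discussion that follows Theorem \ref{thm:main}, the ridge $F(\gRR 1,\gA 0^{-1})$ is contained in the complex line $L_{*3}(\alpha,\alpha,\theta,\phi)$, and the cycle transformation $\gA 0\gRR 2\gRR 1$ has order $l'$ by Table \ref{table:cycletransf}. The goal is therefore to show that $D$, $\gA 0(D)$ and $\gA 0\gRR 2(D)$ sit in three consecutive sectors around $L_{*3}(\alpha,\alpha,\theta,\phi)$ whose combined angular opening is exactly $2\pi/l'$, and that $\gA 0\gRR 2\gRR 1$ acts on the normal direction as a rotation by this angle. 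The $l'$ iterates of the triple will then tessellate a full neighbourhood of the ridge.

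First I would choose a local holomorphic coordinate $\zeta$ on the complex line orthogonal to $L_{*3}(\alpha,\alpha,\theta,\phi)$ at a point of the ridge. The novelty here, compared to the other complex-line ridges, is that the two bisectors $B(\gRR 1)$ and $B(\gA 0^{-1})$ that bound $D$ near the ridge are naturally written in different coordinate systems ($\textbf{z}$- and $\textbf{y}$-coordinates respectively). I would push everything into the $\textbf{z}$-coordinates via the relation $\textbf{z}=\gR 2(\textbf{y})$ set up in Section \ref{sec:2fold}, and then verify that, in the coordinate $\zeta$, both bisectors become half-planes $\arg(\zeta)=0$ and $\arg(\zeta)=\psi_D$ meeting transversally at the ridge. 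The same operation applied to $\gA 0(D)$ and $\gA 0\gRR 2(D)$ produces two further sectors, which by the side-pairing convention automatically share their boundary bisectors with the previous sector in the cycle.

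Next I would compute the three sector openings and show that $\psi_D+\psi_{\gA 0(D)}+\psi_{\gA 0\gRR 2(D)}=2\pi/l'$; this is a trigonometric identity in $(\alpha,\theta,\phi)$ that follows from the explicit formulas for the dihedral angles of $D$ along sides in configurations \conf 1, \conf 2 and \conf 3 and the definition of $l'$ in \eqref{eq:p,p',k,k',l,l',d}. Separately, I would verify that the cycle transformation $\gA 0\gRR 2\gRR 1$, which by Table \ref{table:cycletransf} equals $(\gRR 2^{-1}K)^2$ and has order $l'$, is a complex reflection in $L_{*3}(\alpha,\alpha,\theta,\phi)$ whose non-trivial eigenvalue is $e^{2\pi i/l'}$; this pins down its action on $\zeta$ as multiplication by $e^{2\pi i/l'}$. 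The main obstacle is the bookkeeping in the first step, where one must juggle the three coordinate systems $\textbf{z}$, $\textbf{w}$ and $\textbf{y}$ attached to $D_3$, $D_1$ and $D_2$, and check that the bisectors bounding each of the three polyhedra at the ridge really become flat half-planes under the same transverse coordinate $\zeta$; once this set-up is done, the angle addition and the eigenvalue identification are mechanical and follow the template of the cited references without modification.
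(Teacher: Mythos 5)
Your overall strategy matches the paper's: pass to coordinates $(\xi_1,\xi_2)$ adapted to $L_{*3}(\alpha,\alpha,\theta,\phi)$, observe that (a scalar multiple of) $\gA 0\gRR 2\gRR 1$ fixes the ridge pointwise and rotates the transverse direction by $e^{2i(\alpha+\phi-\pi)}=e^{-2\pi i/l'}$, show the three polyhedra cover a sector of that amplitude, and let the $l'$ powers of the cycle transformation fill out the neighbourhood. However, the middle step as you describe it would fail. You propose to verify that \emph{each} bounding bisector becomes a flat half-plane $\arg(\zeta)=\mathrm{const}$ in the transverse coordinate and then to add three well-defined sector openings $\psi_D+\psi_{\gA 0(D)}+\psi_{\gA 0\gRR 2(D)}=2\pi/l'$. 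That is not what happens: only the two \emph{extreme} faces behave this way. The paper computes that $S(\gRR 1)\subset D$ sits at $\arg\xi_1=0$ and that the image of $S(\gRR 1^{-1})$ under $\gA 0\gRR 2$ sits at $\arg\xi_1=2(\alpha+\phi-\pi)$, but for the \emph{intermediate} faces (e.g. $S(\gA 0)$ and the image of $S(\gRR 2^{-1})$) it only establishes one-sided inequalities such as $\im e^{i(\alpha+\phi-\pi)}\xi_1\geq 0$, not equalities. Bisectors are not totally geodesic, and these internal walls are genuinely not rays of constant $\arg\xi_1$; so the individual "openings" $\psi_D$, $\psi_{\gA 0(D)}$, $\psi_{\gA 0\gRR 2(D)}$ are not well defined and cannot be summed. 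Any attempt to prove your proposed trigonometric identity for the three dihedral angles would run aground here.

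The paper's workaround, which you would need to adopt, is to prove \emph{covering} and \emph{disjointness} separately: the union covers the sector because its two outer walls have the correct constant arguments $0$ and $2(\alpha+\phi-\pi)$; adjacent pairs ($D$ and $\gA 0(D)$; $\gA 0(D)$ and $\gA 0\gRR 2(D)$) are disjoint by appealing to the already-established tessellation around the ridges $F(\gA 0,\gA 0^{-1})$ and $F(\gRR 2,\gRR 2^{-1})$; and the non-adjacent pair is separated by the sign computation showing $\arg\xi_1$ is at most $\alpha+\phi-\pi$ on one and at least $\alpha+\phi-\pi$ on the other. Two smaller points: the rotation multiplier is $e^{2i(\alpha+\phi-\pi)}=e^{-2\pi i/l'}$ (your $e^{2\pi i/l'}$ has the wrong sign, and the map fixing the ridge pointwise is $e^{-2i(\theta-\alpha)}\gA 0\gRR 2\gRR 1$, not $\gA 0\gRR 2\gRR 1$ itself); and the whole argument uses $\sin(\alpha+\phi)>0$, i.e. $l'$ positive and finite, which is exactly the condition under which this ridge does not collapse.
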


\begin{proof}
The ridge $F(\gRR 1,\gA 0^{-1})$ is contained in $L_{*3}(\alpha,\alpha,\theta,\phi)$.
Remark that $e^{-2i(\theta-\alpha)}\gA 0\gRR 2\gRR 1$ fixes the ridge pointwise and rotates its normal vector $\textbf n_{*3}(\alpha, \alpha,\theta,\phi)$ by $e^{2i(\alpha+\phi-\pi)}$.

The proof consists in changing the coordinates to have a similar situation as for the other ridges contained in a complex line. 
The new coordinates will be in terms of two vectors spanning the complex line and the vector normal to it, since the complex line is the mirror of the transformation $\gA 0\gRR 2\gRR 1$.
More precisely, writing
\begin{multline}
\begin{pmatrix}
x_1 \\ x_2 \\ 1
\end{pmatrix}
=\frac{\sin \phi\sin(\alpha-\theta)-\sin \alpha \sin (\theta+\phi)x_2}{\sin \theta\sin(\alpha+\phi)}
\begin{pmatrix}
0 \\ -1 \\ -1
\end{pmatrix}
+x_1 \begin{pmatrix}
1 \\ 0 \\ 0
\end{pmatrix}\\
+ \frac{1-x_2}{\sin \theta\sin(\alpha+\phi)}
\begin{pmatrix}
0 \\ \sin \phi \sin(\alpha-\theta) \\ \sin (\theta+\phi)\sin \alpha
\end{pmatrix},
\end{multline}
the new coordinates are 
\begin{equation}
\begin{split}
\xi_1&=\frac{\sin \phi\sin(\alpha-\theta)-\sin \alpha \sin (\theta+\phi)x_2}{1-x_2}, \\
\xi_2&=\frac{\sin \theta\sin(\alpha+\phi)x_1}{1-x_2}.
\end{split}
\end{equation}

This means that $\gA 0\gRR 2\gRR 1$ acts on the new coordinates by sending $(\xi_1,\xi_2)$ to the point $(e^{2i(\alpha+\phi-\pi)}\xi_1,\xi_2)$. 
Since the configurations are as in Figure \ref{fig:tessell}, if we prove that $D$, $\gA 0(D)$ and $\gA 0\gRR 2(D)$ cover the sector defined by the argument of $\xi_1$ being between 0 and $2(\alpha+\phi-\pi)$, then the appropriate images under $\gA 0\gRR 2\gRR 1$ will cover a neighbourhood of $F(\gA 0^{-1},\gRR 2)$. 

\begin{figure}
\centering
\includegraphics[width=1\textwidth]{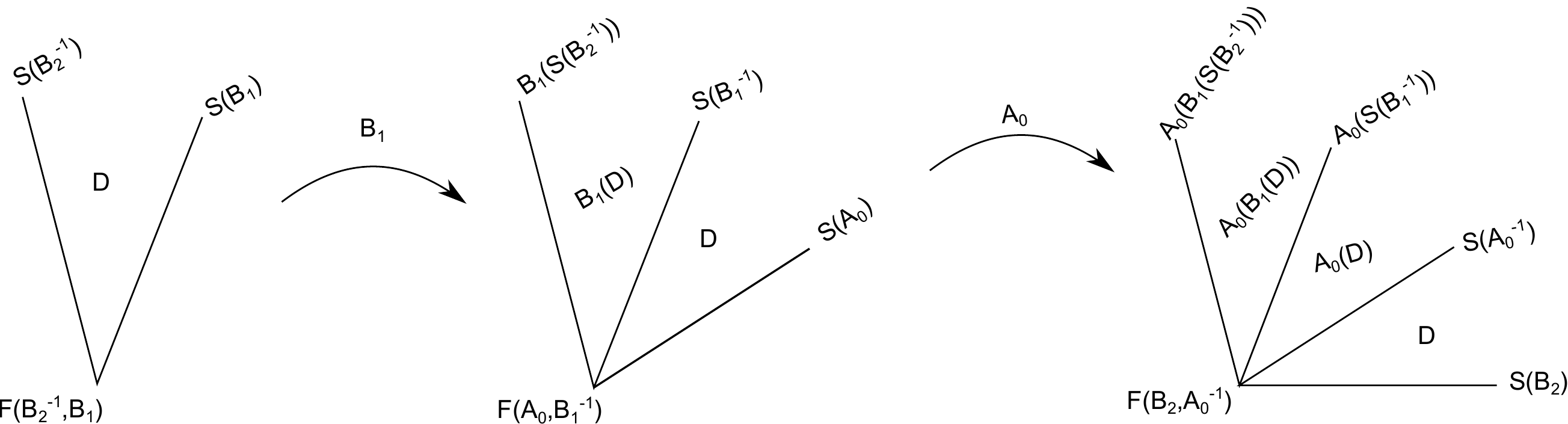}
\begin{quote}\caption{The polyhedra around $F(\gA 0^{-1},\gRR 2)$. \label{fig:tessell}} \end{quote}
\end{figure}

First notice that if we are in $S(\gRR 1)$, then $x_2 \in \R$ and so $\arg \xi_1=0$. 
Moreover, if we take a point in $\textbf z \in S\gRR 1^{-})$, then $z_2=e^{i\theta}u$ with $ u\in\R$ and the coordinate $\xi_1$ of $\gA 0\gRR 2 \textbf z$ is 
\[
\xi_1=e^{2i(\alpha+\phi-\pi)}\frac{\sin(\theta+\phi)u+\sin\phi}{\sin(\alpha-\theta)u-\sin\alpha}
\]
and so $\arg \xi_1=2(\alpha+\phi-\pi)$.

The last thing we need to prove is that such images are disjoint. 
Now the pairs $D,\gA 0D$ and $\gA 0D,\gRR 2\gA 0D$ are disjoint because of tessellation property around $F(\gA 0,\gA 0^{-1})$ and $F(\gRR 2,\gRR 2^{-1})$. 
To prove that $D$ and $\gRR 2\gA 0D$ are disjoint, it is enough to prove that the argument of the coordinate $\xi_1$ of points in $D$ is smaller than $\alpha+\phi-\pi$, while the one of points in $\gRR 2\gA 0D$ is bigger than $\alpha+\phi-\pi$.

If one writes the coordinate $\xi_1$ in terms of the $\textbf{v}$-coordinates, then a point in $S(\gA 0)$ has coordinate $v_1=e^{i\phi'}u$, with $\R\ni u\leq \frac{-\sin(2\alpha)}{\sin(\theta+\phi)}$ by \ref{lemma:sidecombinatorics} and 
\[
\xi_1=e^{i(\alpha+\phi-\pi)}\frac{\sin\phi\sin(\alpha-\theta)(-\sin(2\alpha)-\sin(\theta+\phi)u)}{\sin(\alpha-\theta)u-\sin(\alpha+\phi)v_2+\sin(\alpha-\phi)}.
\]
Then 
\[
\im e^{i(\alpha+\phi-\pi)}\xi_1=\sin\phi\sin(\alpha-\theta)(-\sin(2\alpha)-\sin(\theta+\phi)u)\sin(\alpha+\phi)\im v_2 \geq 0.
\]

Similarly, if we take a point $ \textbf z \in S(\gRR 2^{-1})$, then we have $w_2=e^{-i\theta}v$, with $\R\ni v \leq\frac{\sin\phi}{\sin(\theta+\phi)}$ and the coordinate $\xi_1$ of $\gA 0 \textbf z$ is 
\[
\xi_1=e^{-i(\alpha+\phi-\pi)} \frac{\sin \phi}{\sin\alpha} \cdot
\frac{\sin (\theta+\phi) u-\sin\phi}{\sin(\alpha+\phi)e^{-i\phi}w_1+\sin(\alpha-\theta)u-\sin\alpha}
\]
and 
\[
\im e^{i(\alpha+\phi-\pi)} \xi_1=\frac{\sin\phi}{\sin\alpha} (\sin\phi-\sin(\theta+\phi)u) \sin(\alpha+\phi) \im e^{-i\phi}w_2 \leq 0.
\]

Remark that we are using the fact that $\sin(\alpha+\phi)>0$, which is always the case when the ridge does not collapse (i.e. $l'>0$ and finite). 
\end{proof}

\subsection{The case \texorpdfstring{$k'$}{} negative}\label{sec:kneg}

\begin{figure}[t]
\includegraphics[width=1\textwidth]{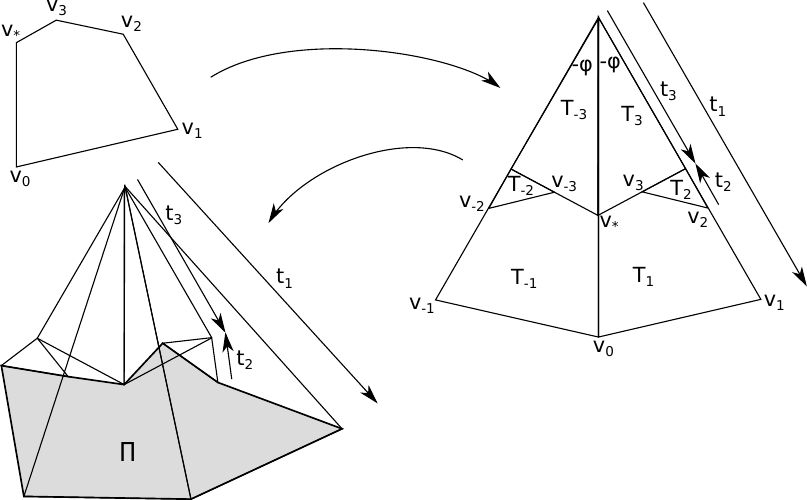}
\begin{quote}\caption{The configuration of triangles when $k'$ is negative. \label{fig:kneg}} \end{quote}
\end{figure}

When $k'$ is negative, after applying $P^{-1}$ to the configuration $(\alpha, \alpha, \theta,\phi)$ we obtain a configuration where the last angle is negative. 
This means that we cannot describe the configuration with the same coordinates and triangles as before. 
This doesn't stop us from doing everything in the same way, up to taking a slightly different configuration of triangles. 
By construction (see Figure \ref{fig:thetapar}), once we developed the cone metric on the plane, $\phi$ was the angle between the line passing through $v_*$ and $v_0$ and the line passing through $v_1$ and $v_2$ on the side of $v_0$ and $v_1$. 
When this angle is negative, we will take $-\phi$ to be the angle between the same two lines, but on the side of $v_2$ and $v_*$ (see Figure \ref{fig:kneg}).

The area of the cone metric is the area of the shaded region $\Pi$. 
Using the coordinates as in the figure, this is 
\[
\area=\frac{-\sin \phi \sin \alpha}{\sin(\alpha-\phi)}|\zzc 1|^2
- \frac{\sin \theta \sin \beta}{\sin(\beta-\theta)}|\zzc 2|^2
-\frac{-\sin\phi \sin \theta}{\sin(\theta+\phi)}|\zzc 3|^2.
\]
Remembering that $-\sin\phi$ is positive, this is still a Hermitian form of the same signature, except that the roles of $\zzc 1$ and $\zzc 3$ are exchanged. 
This makes sense, since now the triangles $T_2$ and $T_3$ are "inside" the triangle $T_1$

When looking at the vertices, this tells us that the we cannot have the line $L_{01}$, since to make $v_0$ and $v_1$ collapse, one should take $x_1=0$ and the whole figure would collapse. 
We will hence have a new vertex $\textbf v_{*23}$ obtained by taking $\zzc 1=\zzc 3=0$ and so by making $v_* \equiv v_2 \equiv v_3$ instead of the three vertices $\textbf y_1, \textbf y_9, \textbf y_{12}$.  
In terms of our polyhedron $D$, this means that $v_0$, $v_7$ and $v_{11}$ collapse to this new point $\textbf v_{*23}$, which is on the boundary (i.e. it makes the area be 0) if $k'$ is infinite. 
All the other vertices remain the same and everything else in the study of the combinatorial structure of the polyhedron can be done in the exact same way. 
In particular, as in Proposition \ref{prop:gencollapse}, we still have that the vertices on $L_{*0}$ collapse to a single vertex if $\pi-\alpha'-\theta' \leq 0$ (i.e. if $d \leq 0$) and the vertices on $L_{*1}$ collapse to a single vertex if $\alpha'-\theta'-\phi' \leq 0$ (i.e. if $l' \leq 0$). 
Remark that the vertices on $L_{*2}$ and $L_{*3}$ never collapse, as $l>0$ in all our cases.
This analysis gives the cases in Theorem \ref{thm:main}.

\addcontentsline{toc}{section}{\refname}
\bibliographystyle{alpha}
\bibliography{biblio}

\begin{thebibliography}{DFP05}

\bibitem[BP15]{boadiparker}
Richard~K. Boadi and John~R. Parker.
\newblock Mostow's lattices and cone metrics on the sphere.
\newblock {\em Adv. Geom.}, 15(1):27--53, 2015.

\bibitem[DFP05]{type2}
Martin Deraux, Elisha Falbel, and Julien Paupert.
\newblock New constructions of fundamental polyhedra in complex hyperbolic
  space.
\newblock {\em Acta Math.}, 194(2):155--201, 2005.

\bibitem[DM86]{delignemostow}
P.~Deligne and G.~D. Mostow.
\newblock Monodromy of hypergeometric functions and nonlattice integral
  monodromy.
\newblock {\em Inst. Hautes \'Etudes Sci. Publ. Math.}, (63):5--89, 1986.

\bibitem[DM93]{dmbook}
Pierre Deligne and George~D. Mostow.
\newblock {\em Commensurabilities Among Lattices in PU(1,n)}.
\newblock Princeton University Press, 1993.

\bibitem[DPP]{dppcommens}
Martin Deraux, John~R. Parker, and Julien Paupert.
\newblock On commensurability classes of non-arithmetic complex hyperbolic
  lattices.

\bibitem[Gol99]{goldman}
William~M. Goldman.
\newblock {\em Complex hyperbolic geometry}.
\newblock Oxford Mathematical Monographs. The Clarendon Press, Oxford
  University Press, New York, 1999.
\newblock Oxford Science Publications.

\bibitem[Koj01]{sadayoshi}
Sadayoshi Kojima.
\newblock Complex hyperbolic cone structures on the configuration spaces.
\newblock {\em Rend. Istit. Mat. Univ. Trieste}, 32(suppl. 1):149--163 (2002),
  2001.
\newblock Dedicated to the memory of Marco Reni.

\bibitem[Mos86]{mostow2}
G.~D. Mostow.
\newblock Generalized {P}icard lattices arising from half-integral conditions.
\newblock {\em Inst. Hautes \'Etudes Sci. Publ. Math.}, (63):91--106, 1986.

\bibitem[Mos88]{mostow3}
G.~D. Mostow.
\newblock On discontinuous action of monodromy groups on the complex
  {$n$}-ball.
\newblock {\em J. Amer. Math. Soc.}, 1(3):555--586, 1988.

\bibitem[Par06]{livne}
John~R. Parker.
\newblock Cone metrics on the sphere and {L}ivn\'e's lattices.
\newblock {\em Acta Math.}, 196(1):1--64, 2006.

\bibitem[Par09]{survey}
John~R. Parker.
\newblock Complex hyperbolic lattices.
\newblock In {\em Discrete groups and geometric structures}, volume 501 of {\em
  Contemp. Math.}, pages 1--42. Amer. Math. Soc., Providence, RI, 2009.

\bibitem[Pas16]{irene}
Irene Pasquinelli.
\newblock Deligne-{M}ostow lattices with three fold symmetry and cone metrics
  on the sphere.
\newblock {\em Conform. Geom. Dyn.}, 20:235--281, 2016.

\bibitem[Sau90]{sauter}
John~Kurt Sauter, Jr.
\newblock Isomorphisms among monodromy groups and applications to lattices in
  {${\rm PU}(1,2)$}.
\newblock {\em Pacific J. Math.}, 146(2):331--384, 1990.

\bibitem[Thu98]{thurston}
William~P. Thurston.
\newblock Shapes of polyhedra and triangulations of the sphere.
\newblock In {\em The {E}pstein birthday schrift}, volume~1 of {\em Geom.
  Topol. Monogr.}, pages 511--549. Geom. Topol. Publ., Coventry, 1998.

\end{thebibliography}
\nocite{*}

\end{document}